\newcommand{\1}{\mathds 1}
\newcommand{\IC}{\mathbb C}
\newcommand{\IL}{\mathbb L}
\newcommand{\IN}{\mathbb N}
\newcommand{\IR}{\mathbb R}
 \newcommand{\E}{\mathcal E}
 \renewcommand{\H}{\mathcal H}
 \newcommand{\J}{\mathcal J}
\renewcommand{\L}{\mathcal L}
\newcommand{\U}{\mathcal U}
\newcommand{\V}{\mathcal V}
\newcommand{\W}{\mathcal W}
\renewcommand{\AA}{\mathfrak A}
 \newcommand{\bin}{\mathrm{bin}}
\newcommand{\id}{\mathrm{id}}
\newcommand{\op}{\mathrm{op}}
\newcommand{\tr}{\mathrm{tr}}
\newcommand{\spec}{\mathrm{sp}}
\newcommand{\dom}{\operatorname{dom}}
\renewcommand{\Im}{\operatorname{Im}}
\renewcommand{\Re}{\operatorname{Re}}
\renewcommand{\epsilon}{\varepsilon}
\renewcommand{\phi}{\varphi}
\newcommand{\abs}[1]{\lvert#1\rvert}
\newcommand{\norm}[1]{\lVert#1\rVert}
\newcommand{\ip}[1]{\langle#1\rangle}
\theoremstyle{plain}
\newtheorem{proposition}{Proposition}[section]
\newtheorem{lemma}[proposition]{Lemma}
\newtheorem{theorem}[proposition]{Theorem}
\newtheorem*{theorem*}{Theorem}
\theoremstyle{definition}
\newtheorem{definition}[proposition]{Definition}
\theoremstyle{remark}
\newtheorem{remark}[proposition]{Remark}
\title{Derivations and KMS-Symmetric Quantum Markov Semigroups}
\author{Matthijs Vernooij}
\address{Delft University of Technology, Faculty EEMCS/DIAM, P.O.Box 5031, 2600 GA Delft, The Netherlands}
\email{m.n.a.vernooij@tudelft.nl}
\author{Melchior Wirth}
\address{Institute of Science and Technology Austria (ISTA), Am Campus 1, 3400 Klosterneuburg, Austria}
\email{melchior.wirth@ist.ac.at}
\begin{document}

\begin{abstract}
We prove that the generator of the $L^2$ implementation of a KMS-symmetric quantum Markov semigroup can be expressed as the square of a derivation with values in a Hilbert bimodule, extending earlier results by Cipriani and Sauvageot for tracially symmetric semigroups and the second-named author for GNS-symmetric semigroups. This result hinges on the introduction of a new completely positive map on the algebra of bounded operators on the GNS Hilbert space. This transformation maps symmetric Markov operators to symmetric Markov operators and is essential to obtain the required inner product on the Hilbert bimodule.
\end{abstract}

\maketitle

%\tableofcontents

\section{Introduction}

Quantum Markov semigroups are a versatile tool that has found applications not only in quantum statistical mechanics, where they were originally introduced in the description of certain open quantum systems \cite{Ali76,GKS76,Kos72,Lin76}, but also in various purely mathematical fields such as noncommutative harmonic analysis \cite{JX07,JMP14}, noncommutative probability \cite{Bia03,CFK14}, noncommutative geometry \cite{Arh23,CS03a,Sau96} and the structure theory of von Neumann algebras \cite{CS15,CS17,Pet09a}.

One central question from the beginning was to describe the generators of quantum Markov semigroups. For quantum Markov semigroups acting on matrix algebras, a characterization of their generators was given by Lindblad \cite{Lin76} and Gorini--Kossakowski--Sudarshan \cite{GKS76} and later extended to generators of uniformly continuous quantum Markov semigroups on arbitrary von Neumann algebras by Christensen--Evans \cite{CE79}. While partial results are known in particular for type I factors \cite{AZ15,Dav79,Hol95}, a similarly explicit description of unbounded generators of quantum Markov semigroups on arbitrary von Neumann algebras seems out of reach.

Both for the modelling of open quantum systems and purely mathematical questions in noncommutative probability, operator algebra theory, etc., one is often not interested in arbitrary quantum Markov semigroups, but quantum Markov semigroups that are symmetric with respect to a reference state or weight. In quantum statistical mechanics, these describe open systems coupled to a heat bath in thermal equilibrium. From the mathematical standpoint, symmetry with respect to a reference state allows to extend the semigroups to symmetric semigroups on the GNS Hilbert space, which makes the powerful tools for self-adjoint Hilbert space operators available.

If the reference state or weight is a trace, there is an unambiguous notion of symmetry, called tracial symmetry. The study of tracially symmetric quantum Markov semigroups through their associated quadratic forms, so-called Dirichlet forms, was initiated by Albeverio and H{\o}egh-Krohn \cite{AH77} and further developed by Lindsay and Davies \cite{DL92,DL93}. The (Hilbert space) generators of tracially symmetric quantum Markov semigroups have been characterized by Cipriani and Sauvageot \cite{CS03b} to be of the form $\delta^\ast\delta$, where $\delta$ is a derivation with values in a Hilbert bimodule. This result has lead to a far range of applications from analysis
on fractals \cite{HT13} and metric graphs \cite{BK19} over noncommutative geometry \cite{CS03b},
noncommutative probability \cite{Dab10, JZ15}, quantum optimal transport \cite{Wir20, WZ21} to the structure theory of von Neumann algebras and in particular Popa’s deformation
and rigidity theory \cite{Pet09b, DI16, Cas21, CIW21}.

Despite this success, the notion of tracial symmetry is somewhat limiting. For one, von Neumann algebras with a type III summand do not even admit a (faithful normal) trace. But even on semi-finite von Neumann algebras, plenty of quantum Markov semigroups of interest are not tracially symmetric. For example, in many models of open quantum systems one considers quantum Markov semigroups symmetric with respect to a Gibbs state, which is only a trace in the infinite temperature limit.

In the case when the reference state is not a trace, there are several non-equivalent notions of symmetry, for example GNS symmetry and KMS symmetry. If the state is of the form $\tr(\cdot\,\rho)$ on a type I factor, then GNS symmetry is symmetry with respect to the inner product $(x,y)\mapsto\tr(x^\ast y\rho)$, while KMS symmetry is symmetry with respect to the inner product $(x,y)\mapsto \tr(x^\ast\rho^{1/2}y \rho^{1/2})$.

GNS symmetry is the strongest one of the symmetry conditions usually considered, and it also implies commutation with the modular group, which makes the structure of GNS-symmetric quantum Markov semigroups particularly nice and rich. For GNS-symmetric quantum Markov semigroups on matrix algebras, Alicki's theorem \cite{Ali76} gives a characterization of their generators in the spirit of the results of Lindblad and Gorini--Kossakowski--Sudarshan. But it can also be recast as a representation of the generator as square of a derivation, thus presenting an analogue of the result of Cipriani and Sauvageot mentioned above (see \cite{CM17}). In this process one loses the property that the left and right action are $\ast$-homomorphisms, but this is unavoidable, as was shown by the first-named author \cite{Ver22}.

This result of Alicki has played a central role in recent research motivated by quantum information theory, in particular for the development of a dynamical quantum optimal transport distance \cite{CM17}, relating hypercontractivity and logarithmic Sobolev inequalities for quantum systems \cite{Bar17} and the proof of the complete modified logarithmic Sobolev inequality for finite-dimensional GNS-symmetric quantum Markov semigroups \cite{GR22}.

Moving beyond matrix algebras, the second-named author established a version of the Christensen--Evans theorem for generators of uniformly continuous GNS-symmetric quantum Markov semigroups \cite{Wir22a} and a generalization of the result of Cipriani and Sauvageot for generators of arbitrary GNS-symmetric quantum Markov semigroups \cite{Wir22b}.

Let us describe the latter result in some more detail. If a quantum Markov semigroup $(\Phi_t)$ is GNS-symmetric with respect to the state (or more generally weight) $\phi$, then it has a GNS implementation as strongly continuous semigroup $(T_t)$ on the GNS Hilbert space $L^2(M,\phi)$. Let $\E$ denote the associated quadratic form and $\AA_\phi$ the maximal Tomita algebra induced by $\phi$. It is shown in \cite{Wir22b} that
\begin{equation*}
    \AA_\E=\{a\in \AA_\phi\mid \Delta_\phi^z(a)\in \dom(\E)\text{ for all }z\in\IC\}
\end{equation*}
is a Tomita algebra and a form core for $\E$. Moreover, there exists a Hilbert space $\H$ with a left action of $\AA_\E$ that is a $\ast$-homomorphism for $^\sharp$ and a right action that is a $\ast$-homomorphism for $^\flat$ and a closable operator $\delta\colon\AA_\E\to \H$ satisfying the Leibniz rule $\delta(ab)=a\delta(b)+\delta(a)b$ such that
\begin{equation*}
    \E(a,b)=\langle\delta(a),\delta(b)\rangle_\H
\end{equation*}
for $a,b\in\AA_\E$. Moreover, $\H$ carries an anti-unitary involution and a strongly continuous unitary group with certain compatibility conditions that reflect the commutation of $(T_t)$ with the modular operator and modular conjugation.

It is then natural to ask whether this relation between $L^2$ generators of quantum Markov semigroups and derivations can be extended to KMS-symmetric semigroups as KMS symmetry can be seen as the more natural assumption in some contexts. For one, every completely positive map can be decomposed as a linear combination of KMS-symmetric ones using the Accardi--Cecchini adjoint \cite{AC82}, while for GNS symmetry the commutation with the modular group poses an algebraic constraint. This makes KMS-symmetric quantum Markov semigroups more suitable for various applications, such as the characterization of the Haagerup property in terms of KMS-symmetric quantum Markov semigroups \cite{CS15}, while the same property for GNS-symmetric semigroups is more restrictive. But also in quantum statistical mechanics, irreversible open quantum systems are often modeled by quantum Markov semigroups that are only KMS-symmetric rather than GNS-symmetric,  such as the heat-bath dynamics introduced in \cite{KB16} for example.

The lack of commutation with the modular group poses a serious challenge. For example, many questions regarding noncommutative $L^p$ spaces can be reduced to $L^p$ spaces with respect to a trace by Haagerup's reduction method \cite{HJX10}, but commutation with the modular group is necessary for maps to be compatible with this reduction procedure.

Even for KMS-symmetric quantum Markov semigroups on type I factors, when explicit representations of the generator are known \cite{FU10,AC21}, it is not obvious if the generator can be expressed in the form $\delta^\ast\delta$ for a derivation.

For these reasons it has not been clear if one should even expect a Cipriani--Sauvageot-type result for KMS-symmetric quantum Markov semigroups. In this article we show that this is indeed the case, not only on matrix algebras, but arbitrary von Neumann algebras. For generators of uniformly continuous quantum Markov semigroups, our main result is the following (Theorems \ref{thm:existence:deriv_bounded}, \ref{thm:uniqueness} in the main part). Here $\L_2$ denotes the KMS implementation of $\L$ on $L^2(M)$.

\begin{theorem*}
Let $(\Phi_t)$ be a uniformly continuous KMS-symmetric quantum Markov semigroup on $M$ and let $\L$ denote its generator.  There exists a correspondence $\H$, an anti-linear involution $\J\colon \H\to \H$ and a bounded operator $\delta\colon L^2(M)\to \H$ satisfying
\begin{enumerate}[\indent (a)]
    \item $\J(x\xi y)=y^\ast(\J\xi)x^\ast$ for all $x,y\in M$ and $\xi\in \H$,
    \item $\delta(Ja)=\J\delta(a)$ for all $a\in L^2(M)$,
    \item $\delta(ab)=\pi_l(a)\cdot\delta(b)+\delta(a)\cdot J\pi_r(b)^\ast J$ for all $a\in M\phi^{1/2}$, $b\in \phi^{1/2}M$,
    \item $\overline{\mathrm{lin}}\{\delta(a)x\mid a\in L^2(M),\,x\in M\}=\H$
\end{enumerate}
such that
\begin{equation*}
    \L_2=\delta^\ast \delta.
\end{equation*}
Moreover, there exists $\xi\in \H$ such that 
\begin{equation*}
    \delta(a)=\pi_l(a)\xi-\xi(J\pi_r(a)^\ast J)
\end{equation*}
for $a\in M\phi^{1/2}\cap \phi^{1/2}M$.

Furthermore, a triple $(\H,\J,\delta)$ satisfying (a)--(d) is uniquely determined by $\delta^\ast\delta$ up to isomorphism.
\end{theorem*}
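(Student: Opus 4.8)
\section*{Proof proposal}

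The plan is to reduce the KMS-symmetric problem to the tracial case of Cipriani and Sauvageot by lifting the whole semigroup to the type~I algebra $B(L^2(M))$, which carries a canonical trace $\tr$. Write $\Phi_t^{(2)}=e^{-t\L_2}$ for the KMS implementation on $L^2(M)$, and recall that KMS symmetry is exactly self-adjointness of each $\Phi_t^{(2)}$, while $\ast$-preservation of $\Phi_t$ forces $\Phi_t^{(2)}$ to commute with the modular conjugation $J$. I would then introduce the transformation $\Psi_t(X)=\Phi_t^{(2)}\,X\,\Phi_t^{(2)}$ on $B(L^2(M))$; this is the completely positive map advertised in the abstract. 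It is manifestly completely positive, sub-unital because $\Phi_t^{(2)}$ is a contraction, and a semigroup because the $\Phi_t^{(2)}$ are. Cyclicity of $\tr$ together with self-adjointness of $\Phi_t^{(2)}$ gives $\tr(\Psi_t(X)Y)=\tr(X\Psi_t(Y))$, so $(\Psi_t)$ is \emph{tracially} symmetric, and the anti-linear $\ast$-automorphism $\mathfrak j(X)=JXJ$ commutes with $\Psi_t$ precisely because $\Phi_t^{(2)}$ commutes with $J$. Thus $(\Psi_t)$ is a tracially symmetric, real, uniformly continuous quantum Markov semigroup on $B(L^2(M))$, with bounded generator $\hat\L(X)=\L_2 X+X\L_2$.

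Applying the Cipriani--Sauvageot theorem to $(\Psi_t)$ produces a Hilbert $B(L^2(M))$-bimodule $\mathcal K$, a real involution on $\mathcal K$ and a (here bounded) derivation $\partial$ with $\partial^\ast\partial=\hat\L$. To descend to $M$, I would use the isometric column embedding $\iota\colon L^2(M)\to B(L^2(M))$, $\iota(a)=\lvert a\rangle\langle\Omega\rvert$, where $\Omega=\phi^{1/2}$ is the standard cyclic vector. Since $\Phi_t^{(2)}\Omega=\Omega$ by Markovianity, one checks $\Psi_t\circ\iota=\iota\circ\Phi_t^{(2)}$ and, on generators, $\hat\L\circ\iota=\iota\circ\L_2$; hence $\delta:=\partial\circ\iota$ satisfies $\langle\delta(a),\delta(b)\rangle=\langle\L_2 a,b\rangle$, that is $\L_2=\delta^\ast\delta$. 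I then let $\H$ be the sub-correspondence of $\mathcal K$ generated by $\delta(L^2(M))$ under the left $B(L^2(M))$-action restricted to $\pi_l(M)$ and the right action restricted to $JMJ=M'$, i.e. to the operators $J\pi_r(\cdot)^\ast J$; this makes (d) hold by construction. The involution $\J$ is the restriction of the real involution of $\mathcal K$; its bimodule compatibility gives (a), and $\iota\circ J=\mathfrak j\circ\iota$ yields (b). The Leibniz rule (c) is inherited from that of $\partial$ after rewriting $\iota(ab)$ through the left and right $M$-actions on the rank-one operators. Finally, boundedness of $\L$ makes $\partial$ inner, $\partial(X)=X\zeta-\zeta X$ for some $\zeta\in\mathcal K$; evaluating at $X=\iota(a)$ and collapsing the rank-one operator against $\Omega$ produces a single $\xi\in\H$ with $\delta(a)=\pi_l(a)\xi-\xi\,J\pi_r(a)^\ast J$.

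For uniqueness, suppose $(\H,\J,\delta)$ and $(\H',\J',\delta')$ both satisfy (a)--(d) and realize the same $\L_2=\delta^\ast\delta=(\delta')^\ast\delta'$. I would define $U\colon\H\to\H'$ on the total set from (d) by $U(\delta(a)x)=\delta'(a)x$ and show it is a well-defined surjective isometry. The key point is that $\langle\delta(a)x,\delta(b)y\rangle$ can be expressed entirely through $\L_2$, the modular data and the $M$-actions: the right-module structure moves $x,y$ across, the Leibniz rule (c) splits products, and the identity $\langle\delta(a),\delta(b)\rangle=\langle\L_2 a,b\rangle$ coming from $\delta^\ast\delta=\L_2$ terminates the recursion in a closed formula independent of the chosen triple. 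Hence $U$ is isometric, and (c), (a), (b) force it to intertwine the two left and right actions and the two involutions, so it is an isomorphism of the structures.

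The main obstacle is the second step. One must verify not only that plain conjugation genuinely lands in the Cipriani--Sauvageot framework, but that restricting a bimodule over the type~I algebra $B(L^2(M))$ back to $M$ reproduces \emph{exactly} the twisted right action $J\pi_r(\cdot)^\ast J$ and the involution law (a); here---unlike the tracial or GNS cases---the left and right actions are genuinely entangled by $J$ and do not commute with any modular group. Controlling this interplay between the tracial structure upstairs and the KMS/modular structure of $M$ downstairs, and confirming that the resulting inner product is precisely the one making $\delta$ a derivation with the stated properties, is the crux of the argument.
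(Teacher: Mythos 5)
You have identified the crux yourself, but the gap at that point is fatal rather than technical. Your lift $\Psi_t(X)=\Phi_t^{(2)}X\Phi_t^{(2)}$ is indeed a tracially symmetric semigroup of completely positive contractions on $B(L^2(M))$ with generator $\hat\L(X)=\L_2X+X\L_2$, but this generator is non-conservative ($\hat\L(1)=2\L_2\neq 0$) and its Dirichlet form is a \emph{pure killing form} with no commutator part:
\begin{equation*}
\hat{\E}(X)=\norm{\L_2^{1/2}X}_{HS}^2+\norm{X\L_2^{1/2}}_{HS}^2.
\end{equation*}
The associated Cipriani--Sauvageot calculus is $\partial X=(\L_2^{1/2}X,\,X\L_2^{1/2})$ in $HS\oplus HS$ with \emph{degenerate} bimodule actions (zero left action on the first summand, zero right action on the second); that is what the Leibniz rule forces for a killing form. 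Composing with $\iota(a)=\lvert a\rangle\langle\Omega\rvert$ and using $\L_2^{1/2}\Omega=0$ collapses your $\delta=\partial\circ\iota$ to $\delta(a)=(\lvert \L_2^{1/2}a\rangle\langle\Omega\rvert,0)$, i.e.\ essentially $\delta\cong\L_2^{1/2}$. This does satisfy $\delta^\ast\delta=\L_2$, but only in the trivial way that any positive operator factors through its square root, and none of (a)--(d) can hold: the left action on the range of $\delta$ is zero. Your claimed inheritance of (c) also fails algebraically: $\iota(ab)=\pi_l(a)\iota(b)$ gives $\delta(ab)=\pi_l(a)\delta(b)+\partial(\pi_l(a))\iota(b)$, and the term $\partial(\pi_l(a))$ bears no relation to $\delta(a)\cdot J\pi_r(b)^\ast J$; nothing in the tracial structure upstairs can generate the modular twist. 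This is exactly the gap the paper closes with the $\V$-transform (which, incidentally, is the map ``advertised in the abstract,'' not conjugation by $\Phi_t^{(2)}$): the inner product is defined on the kernel ideal $I=\ker q$ inside $M\otimes_\bin M^\op$ via $\omega(x\otimes y^\op)=-\tfrac12\langle\phi^{1/2},\check\L(x)\phi^{1/2}y\rangle$, its positivity on $I$ requires that $\V$ preserve complete positivity of the Markov operators $\Phi_t^{(2)}$ (proved by a Phragm\'en--Lindel\"of argument between the dual cones $P^\sharp$ and $P^\flat$), and the key identity $\tfrac12\Delta^{1/4}\check T\Delta^{-1/4}+\tfrac12\Delta^{-1/4}\check T\Delta^{1/4}=T$ is what converts the twisted Leibniz structure back into $\langle a,\L_2 b\rangle$. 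No step in your proposal plays this role.

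The uniqueness sketch has the same kind of gap. In the tracial case the Leibniz rule does reduce $\langle\delta(a)x,\delta(b)y\rangle$ to expressions in $\delta^\ast\delta$ alone, but here the algebraic manipulations only determine the \emph{modular-smeared} combination of \Cref{lem:uniqueness_algebraic_calculation}, namely $\langle\delta(\Delta^{1/4}a)\Delta^{1/4}b,\delta(\Delta^{-1/4}c)\rangle_\H+\langle\delta(\Delta^{-1/4}a)\Delta^{-1/4}b,\delta(\Delta^{1/4}c)\rangle_\H$, in terms of inner products of $\delta$-values; the inner product itself is not directly accessible. Inverting this smearing is the hard analytic content of \Cref{thm:uniqueness}: the paper restricts to spectral subspaces $M[\lambda_1,\lambda_2]$ of the analytic generator of the modular group, shows the operator $T=\Delta^{1/4}\otimes\sigma^\phi_{-i/4}$ on a projective tensor product has spectrum in $(0,\infty)$ (\Cref{lem:tens_prod_mod_op}), and recovers $\langle q_j(x),q_j(y)\rangle$ from the smeared data via the integral identity $\langle q_j(x),q_j(y)\rangle=-\int_0^\infty\frac{d}{ds}\langle q_j(e^{-sT^2}x),q_j(e^{-sT^2}y)\rangle\,ds$, followed by several density steps. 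Your assertion that the recursion ``terminates in a closed formula independent of the chosen triple'' presupposes precisely this inversion, which is where all the work lies.
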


For KMS-symmetric quantum Markov semigroups that are not  uniformly continuous we do not have a uniqueness result, but we can still prove existence in the following form (Theorems \ref{thm:algebra_Dirichlet_form}, \ref{thm:existence_deriv_unbounded} in the main part).

\begin{theorem*}
Let $(\Phi_t)$ be a KMS-symmetric quantum Markov semigroup on $M$ with generator $\L$. If $a\in \dom(\L_2^{1/2})\cap M\phi^{1/2}$ and $b\in \dom(\L_2^{1/2})\cap \phi^{1/2}M$, then $ab\in \dom(\L_2^{1/2})$.

Moreover, there exists a Hilbert space $\H$ with commuting left and right actions of $M$, an anti-unitary involution $\J\colon \H\to\H$ such that
\begin{equation*}
    \J(x\xi y)=y^\ast(\J\xi)x^\ast
\end{equation*}
for $x,y\in M$ and $\xi\in \H$, a closed operator $\delta\colon \dom(\L_2^{1/2})\to \H$ such that $\J\delta=\delta J$ and
\begin{equation*}
    \delta(ab)=\pi_l(a)\cdot\delta(b)+\delta(a)\cdot J\pi_r(a)^\ast J
\end{equation*}
for $a\in \dom(\L_2^{1/2})\cap M\phi^{1/2}$, $b\in \dom(\L_2^{1/2})\cap \phi^{1/2}M$, and
\begin{equation*}
    \L_2=\delta^\ast\delta.
\end{equation*}
\end{theorem*}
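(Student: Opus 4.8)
The plan is to reduce everything to the uniformly continuous case already established, by approximating $(\Phi_t)$ from within. For each fixed $t>0$ put $\L_t=t^{-1}(\id-\Phi_t)$. Because $\Phi_t$ is a KMS-symmetric Markov operator, $e^{-s\L_t}=e^{-s/t}\sum_{k\ge0}\frac{(s/t)^k}{k!}\Phi_t^{k}$ is for every $s\ge0$ a convex combination of the KMS-symmetric Markov operators $\Phi_t^{k}$, so $\L_t$ generates a uniformly continuous KMS-symmetric quantum Markov semigroup. The first theorem then furnishes a correspondence $\H_t$, an anti-unitary involution $\J_t$ and a bounded derivation $\delta_t\colon L^2(M)\to\H_t$ with $(\L_t)_2=\delta_t^\ast\delta_t$ and satisfying (a)--(d). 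Writing $\E_t(a,b)=\langle\delta_t(a),\delta_t(b)\rangle_{\H_t}=\langle a,t^{-1}(\id-e^{-t\L_2})b\rangle$ for the associated form, functional calculus for the self-adjoint operator $\L_2$ shows that $\E_t(a,a)$ increases monotonically to $\E(a,a)=\norm{\L_2^{1/2}a}^2$ as $t\downarrow0$, because $t^{-1}(1-e^{-t\lambda})\uparrow\lambda$ for every $\lambda\ge0$. In particular $a\in\dom(\L_2^{1/2})$ if and only if $\sup_{t>0}\E_t(a,a)<\infty$.

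First I would settle the algebra statement. Fix $a=x\phi^{1/2}$ and $b=\phi^{1/2}y$ in $\dom(\L_2^{1/2})$ with $x,y\in M$, so that $ab=\pi_l(a)b=x\phi^{1/2}y$ lies in $L^2(M)$. The Leibniz rule (c) for the bounded derivation $\delta_t$ gives $\delta_t(ab)=\pi_l(a)\cdot\delta_t(b)+\delta_t(a)\cdot J\pi_r(b)^\ast J$, and since the left and right actions implement $x$ and $y$ with operator norms $\norm x$ and $\norm y$,
\[
\E_t(ab,ab)=\norm{\delta_t(ab)}_{\H_t}^2\le 2\norm x^2\,\E_t(b,b)+2\norm y^2\,\E_t(a,a)\le 2\norm x^2\,\E(b,b)+2\norm y^2\,\E(a,a).
\]
The bound is finite and independent of $t$, so $\sup_{t>0}\E_t(ab,ab)<\infty$ and the criterion above forces $ab\in\dom(\L_2^{1/2})$.

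To obtain the triple $(\H,\J,\delta)$ I would construct it directly as a limit of the bounded data, rather than through connecting maps between the $\H_t$. On the algebraic span of symbols $\delta(a)x$ (with $a$ in the form-domain algebra and $x\in M$) define a sesquilinear form as the limit $\langle\delta(a)x,\delta(b)y\rangle:=\lim_{t\downarrow0}\langle\delta_t(a)x,\delta_t(b)y\rangle_{\H_t}$. Granting that these limits exist, the limiting form is automatically positive semidefinite: each $\langle\cdot,\cdot\rangle_{\H_t}$ is an honest inner product, so every Gram matrix $[\langle\delta_t(a_i)x_i,\delta_t(a_j)x_j\rangle_{\H_t}]$ is positive semidefinite, and this passes to the limit. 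Separation and completion then yield a Hilbert space $\H$; the left and right actions of $M$ and the involutions $\J_t$ are compatible and, being bounded uniformly in $t$, extend to commuting left and right actions of $M$ on $\H$ and an anti-unitary $\J$ with $\J(x\xi y)=y^\ast(\J\xi)x^\ast$ and $\J(\delta(a)x)=x^\ast\delta(Ja)$. Taking $x=1$ gives $\norm{\delta(a)}_\H^2=\lim_t\E_t(a,a)=\E(a,a)$, while the Leibniz rule and $\J\delta=\delta J$ survive the limit since they hold for each $\delta_t$ and all terms converge. Finally $\langle\delta(a),\delta(b)\rangle_\H=\E(a,b)=\langle\L_2^{1/2}a,\L_2^{1/2}b\rangle$, together with completeness of $\dom(\L_2^{1/2})$ in the form norm, shows that $\delta$ is closed with $\delta^\ast\delta=\L_2$.

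The principal difficulty lies in the two granted points: existence of the limits defining the bimodule inner product, and normality of the resulting $M$-actions. The scalar quantities $\E_t(a,b)$ converge by construction, but the bimodule inner products encode the full $M$-valued carré du champ $\Gamma_t(a,b)$, and controlling these uniformly in $t$---not merely their pairings against $\phi$---is exactly where KMS symmetry bites: without commutation with the modular group the naive comparison maps $\delta_t(a)\mapsto\delta_s(a)$ are neither isometric nor contractive as bimodule morphisms (indeed $\L_s-\L_t$ is not a Markov generator), so there is no monotone inductive or projective system of correspondences to pass to the limit of. This is precisely the gap the new completely positive map on $B(L^2(M))$ is designed to fill, since it produces a manifestly positive closed expression for $\Gamma_t$ already in the bounded case; the remaining task is to show that this expression is stable under the approximation $\L_t\to\L$ and that the limiting actions extend normally. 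I expect this uniform, operator-level control of $\Gamma_t$ on the form-domain algebra, rather than the product rule or the algebraic bookkeeping, to be the technical heart of the argument.
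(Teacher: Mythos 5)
Your reduction to the uniformly continuous case and your proof of the first claim are exactly the paper's argument: the paper likewise forms $\E_t(a)=\tfrac1t\langle a,a-T_ta\rangle$, invokes Theorem \ref{thm:existence:deriv_bounded} to get bounded calculi $(\H_t,\J_t,\delta_t)$ with $\E_t=\norm{\delta_t(\cdot)}^2_{\H_t}$, applies the Leibniz rule to bound $\E_t(ab)$ uniformly in $t$ (the paper uses the triangle inequality $\E_t(ab)^{1/2}\leq\norm{\pi_l(a)}\E_t(b)^{1/2}+\E_t(a)^{1/2}\norm{\pi_r(b)}$ rather than your factor-of-two estimate, but this is immaterial), and concludes via monotone convergence of $\E_t$ to $\E$. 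That half is correct.

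The gap is the one you yourself flag: the limits $\lim_{t\downarrow0}\langle\delta_t(a)x,\delta_t(b)y\rangle_{\H_t}$ defining your bimodule inner product have no reason to exist, and nothing in your proposal supplies them. But your diagnosis of what is needed to close it is off. The paper does \emph{not} establish any uniform operator-level control of a carr\'e du champ, nor any compatibility between the $\H_t$; it simply replaces your limit $t\downarrow0$ by an ultraproduct: fix a free ultrafilter $\omega\in\beta\IN\setminus\IN$ and a null sequence $(t_n)$, set $\H=\prod_{n\to\omega}\H_{t_n}$, and define the actions, $\J$, and $\delta(a)=[\delta_{t_n}(a)]$ componentwise. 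The only estimate required is the one you already have, $\norm{\delta_{t_n}(a)}^2_{\H_{t_n}}=\E_{t_n}(a)\leq\E(a)$, since bounded sequences always converge along an ultrafilter; the inner products $\langle\delta(a),\delta(b)\rangle_\H=\lim_{n\to\omega}\E_{t_n}(a,b)=\E(a,b)$ then come out right, closedness of $\delta$ follows from closedness of $\E$, and the algebraic identities pass componentwise. Your second worry, normality of the limiting actions, is also moot: the theorem as stated only asserts commuting unital $\ast$-actions, and the paper explicitly remarks that the ultraproduct actions need \emph{not} be normal (so $\H$ is in general not a correspondence), this failure occurring already for commutative $M$. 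The price of the ultrafilter route is that the limit object is non-canonical, which is why the paper proves no uniqueness statement in the unbounded case.
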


To establish these results, it requires a fundamentally new tool in the form of a quantum channel on $B(L^2(M))$, which we call the \emph{$\V$-transform}. Formally, the $\V$-transform of $T\in B(L^2(M))$ is the solution $S$ of the equation
\begin{equation*}
    \frac 1 2(\Delta^{1/4}S\Delta^{-1/4}+\Delta^{-1/4}S\Delta^{1/4})=T,
\end{equation*}
where $\Delta$ is the modular operator.

A remarkable fact about this map is that it maps positivity-preserving maps (with respect to the self-dual cone induced by $\phi$) to positivity-preserving maps. This property is key for the existence of the Hilbert space $\H$ in our main results.

Let us briefly summarize the outline of this article. To make the proof strategy transparent without the technical difficulties occuring for general von Neumann algebras, but also to make the main results more accessible for researchers in the quantum information theory community, we first develop the $\V$-transform and prove our main result for matrix algebras in \Cref{sec:matrix_algebras}. In \Cref{sec:V-transform} we define the $\V$-transform on general von Neumann algebras and establish some of its properties in particular regarding positvity preservation. In \Cref{sec:uniformly_bounded} we prove our main results on existence and uniqueness of derivations associated with uniformly continuous KMS-symmetric quantum Markov semigroups. Finally, in \Cref{sec:unbounded} we show the existence of derivations associated with not necessarily uniformly continuous semigroups.

\subsection*{Acknowledgments} The authors are grateful to Martijn Caspers for helpful comments on a preliminary version of this manuscript. M. V. was supported by the NWO Vidi grant VI.Vidi.192.018 `Non-commutative harmonic analysis and rigidity of operator
algebras'. M. W. was funded by the Austrian Science Fund (FWF) under the Esprit Programme [ESP 156]. For the purpose of Open Access, the authors have applied a CC
BY public copyright licence to any Author Accepted Manuscript (AAM) version arising from this submission.

\section{Quantum Markov Semigroups and Derivations on Matrix Algebras}\label{sec:matrix_algebras}

In this section we demonstrate the connection between KMS-symmetric quantum Markov semigroups and derivations in the case of matrix algebras. We first prove a finite-dimensional version of the main result of this article, which allows to express generators of KMS-symmetric quantum Markov semigroups as squares of derivations in a suitable sense (\Cref{thm:existence_derivation_matrix}). We then use the simple structure of bimodules over matrix algebras to give a more explicit expression for the quadratic form associated with the generator of a KMS-symmetric quantum Markov semigroups (\Cref{thm:commutator_sqrt_matrix}). As in the general case treated in the next sections, the crucial technical tool is the $\V$-transform, which will be introduced in Subsection \ref{subsec:V-transform_matrix}.

Let us start with some notation. We write $M_n(\IC)$ for the algebra of $n\times n$ matrices over the complex numbers, $I_n$ for the identity matrix in $M_n(\IC)$, and $\id_n$ for the identity map from $M_n(\IC)$ to itself. The norm $\norm\cdot$ always denotes the operator norm, either for elements of $M_n(\IC)$ or for linear maps from $M_n(\IC)$ to itself.

A linear map $\Phi\colon M_n(\IC)\to M_n(\IC)$ is called
\begin{itemize}
    \item \emph{completely positive} if, for all $A_1,\dots,A_n,B_1,\dots,B_n\in M_n(\IC)$,
    \begin{equation*}
        \sum_{j,k=1}^n B_j^\ast\Phi(A_j^\ast A_k)B_k\geq 0;
    \end{equation*}
    \item \emph{conditionally completely negative} if, for all $A_1,\dots,A_n,B_1,\dots,B_n\in M_n(\IC)$ with $\sum_{j=1}^nA_jB_j=0$,
    \begin{equation*}
        \sum_{j,k=1}^n B_j^\ast\Phi(A_j^\ast A_k)B_k\leq 0;
    \end{equation*}
    \item \emph{unital} if $\Phi(I_n)=I_n$.
\end{itemize}

% A linear map $\Phi\colon M_n(\IC)\to M_n(\IC)$ is called \emph{completely positive} if
% \begin{equation*}
%     \sum_{j,k=1}^n B_j^\ast\Phi(A_j^\ast A_k)B_k\geq 0    
% \end{equation*}
% for all $A_1,\dots,A_n,B_1,\dots,B_n\in M_n(\IC)$. It is called \emph{unital} if $\Phi(I_n)=I_n$.

A \emph{quantum Markov semigroup} is a family $(\Phi_t)_{t\geq 0}$ of unital completely positive maps on $M_n(\IC)$ such that
\begin{itemize}
    \item $\Phi_0=\id_n$,
    \item $\Phi_s\Phi_t=\Phi_{s+t}$ for all $s,t\geq 0$,
    \item $\lim_{t\to 0}\lVert \Phi_t-\id_n\rVert=0$.
\end{itemize}
If $(\Phi_t)$ is a quantum Markov semigroup on $M_n(\IC)$, then the limit
\begin{equation*}
    \L=\lim_{t\to 0}\frac 1 t(\id_n-\Phi_t)
\end{equation*}
exists and is called the \emph{generator} of $(\Phi_t)$. It is the unique linear operator on $M_n(\IC)$ such that $e^{-t\L}=\Phi_t$ for all $t\geq 0$. %\Red{From the definition we see that the generator of a quantum Markov semigroup is conditionally completely negative. Conversely, any conditionally completely negative map $\L$ satisfying $\L(I_n)=0$ and $\L(A^\ast)=\L(A)^\ast$ generates a quantum Markov semigroup \cite{??}.}

Now fix a density matrix $\rho\in M_n(\IC)$, that is, a positive matrix with trace $1$, and assume that $\rho$ is invertible. The KMS inner product induced by $\rho$ is defined as
\begin{equation*}
    \langle \cdot,\cdot\rangle_\rho\colon M_n(\IC)\times M_n(\IC)\to \IC,\,(A,B)\mapsto \tr(A^\ast \rho^{1/2}B\rho^{1/2}).
\end{equation*}
If $\Phi\colon M_n(\IC)\to M_n(\IC)$ is a linear map, we write $\Phi^{\dagger}$ for its adjoint with respect to $\langle\cdot,\cdot\rangle_\rho$ and we say that $\Phi$ is \emph{KMS-symmetric} if $\Phi^\dagger=\Phi$.

A quantum Markov semigroup $(\Phi_t)$ is called \emph{KMS-symmetric} if $\Phi_t$ is KMS-symmetric for all $t\geq 0$. Equivalently, $(\Phi_t)$ is KMS-symmetric if and only if its generator is KMS-symmetric.

The \emph{modular group} (or rather its analytic continuation) of $\rho$ is the family $(\sigma_z)_{z\in\IC}$ of algebra homomorphisms on $M_n(\IC)$ defined by
\begin{equation*}
    \sigma_z(A)=\rho^{iz}A\rho^{-iz}
\end{equation*}
for $A\in M_n(\IC)$ and $z\in\IC$. Note that $\sigma_z^\dagger=\sigma_{-\bar z}$.

If a KMS-symmetric operator on $M_n(\IC)$ commutes with the modular group, then it is called GNS-symmetric (this is equivalent to the usual definition of GNS symmetry by \cite[Lemma 2.5]{CM17}).

According to \cite[Theorem 4.4]{AC21}, the generator $\L$ of a KMS-symmetric quantum Markov semigroup is of the form
\begin{equation*}
    \L(A)=(1+\sigma_{-i/2})^{-1}(\Psi(I_n))A+A(1+\sigma_{i/2})^{-1}(\Psi(I_n))-\Psi(A)
\end{equation*}
for some KMS-symmetric completely positive map $\Psi\colon M_n(\IC)\to M_n(\IC)$.

The main goal of this section is to show that the sesquilinear form associated with $\L$ can be written as
\begin{equation*}
    \langle \L(A),B\rangle_\rho=\sum_{j=1}^N \langle [V_j,A],[V_j,B]\rangle_\rho
\end{equation*}
with matrices $V_1,\dots,V_N\in M_n(\IC)$.

\subsection{\texorpdfstring{The $\V$-Transform}{The V-Transform}}\label{subsec:V-transform_matrix}

We write $B(M_n(\IC))$ for the space of all linear maps from $M_n(\IC)$ to itself. This space is generated by left and right multiplication operators in the following sense. For $A\in M_n(\IC)$ let
\begin{equation*}
    \IL_A,\IR_A\colon M_n(\IC)\to M_n(\IC),\,\IL_A(X)=AX,\,\IR_A(X)=XA.
\end{equation*}
By \cite[Lemma A.1]{CM17}, the linear span of $\{\IL_A\IR_B\mid A,B\in M_n(\IC)\}$ is $B(M_n(\IC))$.

The $\V$-transform is a linear map on $B(M_n(\IC))$, which is most conveniently defined through its inverse. Let
\begin{equation*}
    \W\colon B(M_n(\IC))\to B(M_n(\IC)),\,\Phi\mapsto \frac 1 2(\sigma_{i/4}\Phi\sigma_{-i/4}+\sigma_{-i/4}\Phi\sigma_{i/4}).
\end{equation*}
In particular, if $\Phi=\IL_A\IR_B$, then
\begin{equation*}
    \W(\Phi)=\frac 1 2(\IL_{\sigma_{i/4}(A)}\IR_{\sigma_{i/4}(B)}+\IL_{\sigma_{-i/4}(A)}\IR_{\sigma_{-i/4}(B)}).
\end{equation*}

\begin{proposition}\label{prop:V-trafo_inverse_matrix}
The map $\W$ is invertible with inverse given by
\begin{equation*}
    \W^{-1}(\Phi)=2\int_0^\infty \sigma_{-i/4}e^{-r\sigma_{-i/2}} \Phi \sigma_{-i/4}e^{-r\sigma_{-i/2}}\,dr
\end{equation*}
for $\Phi\in B(M_n(\IC))$.

In particular, if $A,B\in M_n(\IC)$, then
\begin{equation*}
    \W^{-1}(\IL_A\IR_B)=2\int_0^\infty \IL_{\sigma_{i/4}(e^{-r\sigma_{i/2}}(A))}\IR_{\sigma_{-i/4}(e^{-r\sigma_{-i/2}}(B))}\,dr.
\end{equation*}
\end{proposition}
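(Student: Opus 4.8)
The plan is to verify directly that the stated integral operator is a two-sided inverse of $\W$; since $B(M_n(\IC))$ is finite-dimensional, it suffices to check one composition. I would write $\Sigma_z$ for the one-parameter group $\Sigma_z(\Phi)=\sigma_z\Phi\sigma_{-z}$ on $B(M_n(\IC))$, so that $\W=\tfrac12(\Sigma_{i/4}+\Sigma_{-i/4})$, abbreviate $F_r=e^{-r\sigma_{-i/2}}$, and set
\begin{equation*}
R(\Phi)=2\int_0^\infty \sigma_{-i/4}F_r\,\Phi\,\sigma_{-i/4}F_r\,dr.
\end{equation*}
The only structural facts needed are the group law $\sigma_z\sigma_w=\sigma_{z+w}$ (so $\sigma_{i/4}\sigma_{-i/4}=\id_n$ and $\sigma_{-i/4}\sigma_{-i/4}=\sigma_{-i/2}$) and that $\sigma_{-i/2}$ commutes with $F_r$.

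First I would compute $\W R$. Applying $\Sigma_{\pm i/4}$ to the integrand and cancelling modular factors via the group law gives
\begin{equation*}
\Sigma_{i/4}(\sigma_{-i/4}F_r\,\Phi\,\sigma_{-i/4}F_r)=F_r\Phi\,\sigma_{-i/2}F_r,\qquad \Sigma_{-i/4}(\sigma_{-i/4}F_r\,\Phi\,\sigma_{-i/4}F_r)=\sigma_{-i/2}F_r\,\Phi F_r.
\end{equation*}
Since $\sigma_{-i/2}$ commutes with $F_r$, the sum of these equals $(F_r\Phi F_r)\sigma_{-i/2}+\sigma_{-i/2}(F_r\Phi F_r)$, which is exactly $-\frac{d}{dr}(F_r\Phi F_r)$ because $\frac{d}{dr}F_r=-\sigma_{-i/2}F_r$. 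Hence the factors $2$ and $\tfrac12$ cancel and
\begin{equation*}
\W R(\Phi)=\int_0^\infty-\frac{d}{dr}\big(F_r\Phi F_r\big)\,dr=F_0\Phi F_0-\lim_{r\to\infty}F_r\Phi F_r.
\end{equation*}
Here $F_0=\id_n$, so the first term is $\Phi$; and because $\rho$ is invertible, $\sigma_{-i/2}$ is diagonalisable with strictly positive eigenvalues $(\lambda_p/\lambda_q)^{1/2}$, so $F_r\to 0$ and the boundary term at infinity vanishes. Thus $\W R=\id$, which by finite-dimensionality identifies $R$ with $\W^{-1}$ and simultaneously shows $\W$ is invertible. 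The same positivity of $\sigma_{-i/2}$ guarantees absolute convergence of the integral defining $R$.

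Finally, for the explicit expression I would specialise $\Phi=\IL_A\IR_B$ and compare with the claimed formula on the basis $\{\IL_{E_{ij}}\IR_{E_{kl}}\}$ of $B(M_n(\IC))$, where $E_{ij}$ are the matrix units for an orthonormal eigenbasis of $\rho$ with eigenvalues $\lambda_1,\dots,\lambda_n$; on these, $\sigma_z$ acts by the scalar $(\lambda_i/\lambda_j)^{iz}$. Using the elementary integral $\int_0^\infty e^{-rt}\,dr=1/t$, both $R(\IL_{E_{ij}}\IR_{E_{kl}})$ and $2\int_0^\infty \IL_{\sigma_{i/4}(e^{-r\sigma_{i/2}}(E_{ij}))}\IR_{\sigma_{-i/4}(e^{-r\sigma_{-i/2}}(E_{kl}))}\,dr$ reduce to multiplication of $\IL_{E_{ij}}\IR_{E_{kl}}$ by the same scalar $2/(c^{1/4}+c^{-1/4})$, with $c=\lambda_i\lambda_k/(\lambda_j\lambda_l)$; since both sides are linear and agree on a basis, they coincide, giving the stated formula for all $A,B$. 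The step requiring the most care is the vanishing of $\lim_{r\to\infty}F_r\Phi F_r$, which is precisely where invertibility of $\rho$ enters; a secondary subtlety is the sign bookkeeping that makes the two seemingly asymmetric integrands for $R$ consistent, reflecting the reversed composition law $\IR_P\IR_Q=\IR_{QP}$ for right multiplications.
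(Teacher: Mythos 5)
Your proof is correct and follows essentially the same route as the paper: you verify $\W R=\id$ by rewriting the sum of the two twisted integrands as $-\frac{d}{dr}(F_r\Phi F_r)$ and then telescoping, with the boundary term at infinity killed by the strictly positive spectrum of $\sigma_{-i/2}$ (which you obtain by explicit diagonalization in an eigenbasis of $\rho$, where the paper instead deduces it from $\tr(\sigma_{-i/2}(A)^\ast A)\geq 0$ and invertibility). The only cosmetic difference is that you verify the explicit formula for $\W^{-1}(\IL_A\IR_B)$ by a scalar computation on the basis $\{\IL_{E_{ij}}\IR_{E_{kl}}\}$, where the paper appeals to ``similar arguments''; both verifications are valid.
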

\begin{proof}
Since $\sigma_{-i/2}$ is an invertible operator on $M_n(\IC)$ and $\tr(\sigma_{-i/2}(A)^\ast A)\geq 0$ for all $A\in M_n(\IC)$, the spectrum of $\sigma_{-i/2}$ consists of strictly positive numbers. Let $\lambda$ denote the smallest eigenvalue of $\sigma_{-i/2}$. By the spectral theorem,
\begin{equation*}
    \norm{e^{-r\sigma_{-i/2}}}\leq e^{-\lambda r}
\end{equation*}
for all $r\geq 0$. It follows that for $\Phi\in B(M_n(\IC))$ we have
\begin{equation*}
    \norm{\sigma_{-i/4}e^{-r\sigma_{-i/2}} \Phi \sigma_{-i/4}e^{-r\sigma_{-i/2}}}\leq e^{-2\lambda r}\norm{\sigma_{-i/4}}^2 \norm{\Phi}.
\end{equation*}
Therefore the integral
\begin{equation*}
    \int_0^\infty \sigma_{-i/4}e^{-r\sigma_{-i/2}} \Phi \sigma_{-i/4}e^{-r\sigma_{-i/2}}\,dr
\end{equation*}
converges absolutely.

Moreover,
\begin{align*}
    &\W\left(2\int_0^\infty \sigma_{-i/4}e^{-r\sigma_{-i/2}} \Phi \sigma_{-i/4}e^{-r\sigma_{-i/2}}\,dr\right)\\
    &\qquad=\int_0^\infty e^{-r\sigma_{-i/2}}(\sigma_{-i/2}\Phi+\Phi\sigma_{-i/2}) e^{-r\sigma_{-i/2}}\,dr\\
    &\qquad=-\int_0^\infty \frac{d}{dr}(e^{-r\sigma_{-i/2}}\Phi e^{-r\sigma_{-i/2}})\,dr\\
    &\qquad=\Phi.
\end{align*}
Thus $\W$ is invertible and the claimed integral expression for the inverse holds. Similar arguments yield the integral formula for $\W^{-1}(\IL_A\IR_B)$.
\end{proof}

\begin{definition}
We call the inverse of $\W$ the $\V$-transform and denote it by $\V$. For $\Phi\in B(M_n(\IC))$ we also write $\check \Phi$ for $\V(\Phi)$.
\end{definition}

\begin{lemma}\label{lem:properties_V-trafo_matrix}
The $\V$-transform is a bijective linear map on $B(M_n(\IC))$ with the following properties.
\begin{enumerate}[(i)]
    \item If $\Phi\in B(M_n(\IC))$ is KMS-symmetric, then $\V(\Phi)$ is KMS-symmetric.
    \item If $\Phi\in B(M_n(\IC))$ is completely positive, then $\V(\Phi)$ is completely positive.
    \item $\V(\id_n)=\id_n$.
    %\item \Red{If $\Phi\in B(M_n(\IC))$ is the generator of a quantum Markov semigroup, then $\V(\Phi)$ is the generator of a QMS.}
\end{enumerate}
\end{lemma}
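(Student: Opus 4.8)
The plan is to establish the three properties of $\V$ by working with the explicit integral formula for $\V=\W^{-1}$ from \Cref{prop:V-trafo_inverse_matrix} and translating structural features of $\W$ into corresponding features of its inverse. The bijectivity is already contained in \Cref{prop:V-trafo_inverse_matrix}, so the work is in verifying (i)--(iii).

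For (i), I would argue that $\W$ itself preserves KMS-symmetry and then transfer this to $\V$. Recall that $\Phi$ is KMS-symmetric iff $\Phi^\dagger=\Phi$, where $\dagger$ is the adjoint for $\langle\cdot,\cdot\rangle_\rho$. Since $\sigma_z^\dagger=\sigma_{-\bar z}$, the maps $\sigma_{\pm i/4}$ satisfy $\sigma_{i/4}^\dagger=\sigma_{i/4}$ and $\sigma_{-i/4}^\dagger=\sigma_{-i/4}$, i.e.\ they are self-adjoint for the KMS inner product. Using $(\sigma_{i/4}\Phi\sigma_{-i/4})^\dagger=\sigma_{-i/4}^\dagger\Phi^\dagger\sigma_{i/4}^\dagger=\sigma_{-i/4}\Phi^\dagger\sigma_{i/4}$, one sees immediately that $\W(\Phi)^\dagger=\W(\Phi^\dagger)$, so $\W$ commutes with the involution $\Phi\mapsto\Phi^\dagger$. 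Since $\V=\W^{-1}$ and $\dagger$ is an involution, $\V$ commutes with $\dagger$ as well, and hence $\V$ maps KMS-symmetric maps to KMS-symmetric maps.

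For (iii), I would simply apply $\W$ to $\id_n$: since $\sigma_{i/4}\,\id_n\,\sigma_{-i/4}=\sigma_{i/4}\sigma_{-i/4}=\id_n$ (the modular maps are multiplicative and $\sigma_{i/4}\sigma_{-i/4}=\sigma_0=\id_n$), we get $\W(\id_n)=\frac12(\id_n+\id_n)=\id_n$, whence $\V(\id_n)=\W^{-1}(\id_n)=\id_n$. Property (ii) is the substantive one and I expect it to be the main obstacle. Here the difficulty is that $\W$ does \emph{not} obviously preserve complete positivity, so the symmetry trick used for (i) is unavailable; instead I would exploit the integral formula $\V(\Phi)=2\int_0^\infty \sigma_{-i/4}e^{-r\sigma_{-i/2}}\,\Phi\,\sigma_{-i/4}e^{-r\sigma_{-i/2}}\,dr$ directly. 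Because the set of completely positive maps is a closed cone, it suffices to show that for each fixed $r\geq0$ the integrand $\sigma_{-i/4}e^{-r\sigma_{-i/2}}\,\Phi\,\sigma_{-i/4}e^{-r\sigma_{-i/2}}$ is completely positive whenever $\Phi$ is.

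The key point is to understand how conjugation-type operators of the form $\Psi\mapsto \Theta\,\Psi\,\Theta$, with $\Theta=\sigma_{-i/4}e^{-r\sigma_{-i/2}}$, act on complete positivity. I would write $\sigma_{-i/4}e^{-r\sigma_{-i/2}}$ out using the spectral calculus of the modular operator: since $\sigma_z(A)=\rho^{iz}A\rho^{-iz}$, every operator built from the $\sigma_z$ is of the form $X\mapsto\sum_k c_k\,\rho^{s_k}X\rho^{s_k}$ for suitable real exponents obtained by diagonalizing $\rho$, because $\sigma_{-i/4}=\IL_{\rho^{1/4}}\IR_{\rho^{-1/4}}$ and $e^{-r\sigma_{-i/2}}$ is again a positive combination of maps $\IL_{\rho^{a}}\IR_{\rho^{-a}}$. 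Writing $\Theta(X)=\rho^{1/4}\big(e^{-r\,\mathrm{Ad}}(\rho^{-1/4}X\rho^{-1/4})\big)\rho^{1/4}$ and checking that $\Theta=\Theta^{\text{(symmetric sandwich)}}$ has the form $X\mapsto\sum_j \rho^{t_j}X\rho^{t_j}$ with positive coefficients, one concludes that $\Theta\,\Phi\,\Theta$ sends $A_j^\ast A_k$ through a composition of sandwichings by positive matrices, and such sandwichings preserve the defining positivity $\sum_{j,k}B_j^\ast\Phi(A_j^\ast A_k)B_k\geq0$; the cleanest route is to verify the Choi-matrix criterion is preserved. The main obstacle is precisely making this structural claim rigorous: one must verify that $\sigma_{-i/4}e^{-r\sigma_{-i/2}}$ is, for each $r$, a completely positive map on $M_n(\IC)$ (equivalently a positive combination of left-right multiplications by a single positive matrix and its inverse powers), so that sandwiching by it preserves complete positivity. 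I would therefore make the subclaim that $\sigma_{-i/4}e^{-r\sigma_{-i/2}}$ is completely positive as a map on $M_n(\IC)$ and that composing a completely positive $\Phi$ on both sides with completely positive maps yields a completely positive map, which then closes the argument after integrating over $r$.
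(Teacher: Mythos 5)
Bijectivity, part (iii), and part (i) of your proposal are fine: your observation that $\sigma_{\pm i/4}$ are self-adjoint for $\langle\cdot,\cdot\rangle_\rho$, so that $\W$ commutes with $\dagger$ and hence so does $\V=\W^{-1}$, is a slightly slicker packaging of exactly what the paper does for (i) via the integral formula.

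Part (ii), however, contains a genuine gap, and it sits precisely at the step you flagged as the main obstacle. Your subclaim that $\Theta_r=\sigma_{-i/4}e^{-r\sigma_{-i/2}}$ is completely positive is false. You assert that $\Theta_r$ has the form $X\mapsto\sum_j c_j\rho^{t_j}X\rho^{t_j}$ with $c_j\geq 0$, but the building blocks are $\IL_{\rho^{a}}\IR_{\rho^{-a}}$, i.e.\ $X\mapsto\rho^{a}X\rho^{-a}$: the exponents on the two sides have \emph{opposite} signs, so the right factor is not the adjoint of the left and these are not Kraus sandwiches. (Also $e^{-r\sigma_{-i/2}}=\sum_k\frac{(-r)^k}{k!}\IL_{\rho^{k/2}}\IR_{\rho^{-k/2}}$ has alternating coefficients, so no positive combination is available.) Concretely, $\sigma_{-i/4}$ does not even preserve self-adjointness: for $\rho=\mathrm{diag}(\lambda_1,\lambda_2)$ with $\lambda_1\neq\lambda_2$ and $X=E_{12}+E_{21}\geq -1$, one gets $\rho^{1/4}X\rho^{-1/4}=(\lambda_1/\lambda_2)^{1/4}E_{12}+(\lambda_2/\lambda_1)^{1/4}E_{21}$, which is not Hermitian. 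Worse, even your reduction ``it suffices that each integrand $\Theta_r\Phi\Theta_r$ be completely positive'' is unavailable, because that pointwise statement is simply not true: take $\Phi=\id_n$, so that $\Theta_r\,\id_n\,\Theta_r=\sigma_{-i/2}e^{-2r\sigma_{-i/2}}$, which in the eigenbasis of $\rho$ is the Schur multiplier $E_{jk}\mapsto f\bigl(\sqrt{\lambda_j/\lambda_k}\bigr)E_{jk}$ with $f(t)=te^{-2rt}$; its multiplier matrix is not Hermitian when $\rho$ is not a multiple of $I_n$, so the integrand is not even Hermiticity-preserving, although the total integral $\V(\id_n)=\id_n$ is of course completely positive. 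So complete positivity of $\V(\Phi)$ cannot be seen $r$-by-$r$ in the composition form of the integral.

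The paper's proof circumvents this by making the modular twisting act \emph{inside the Kraus operators} rather than by composition on $\Phi$: write $\Phi=\sum_j\IL_{V_j^\ast}\IR_{V_j}$ and use the second formula of \Cref{prop:V-trafo_inverse_matrix}, which redistributes the analytic continuation asymmetrically, with $\sigma_{i/4}e^{-r\sigma_{i/2}}$ acting on the left slot and $\sigma_{-i/4}e^{-r\sigma_{-i/2}}$ on the right slot. By the identity $\sigma_z(a)^\ast=\sigma_{\bar z}(a^\ast)$ one has $\sigma_{i/4}\bigl(e^{-r\sigma_{i/2}}(V_j^\ast)\bigr)=\bigl(\sigma_{-i/4}(e^{-r\sigma_{-i/2}}(V_j))\bigr)^\ast$, so each integrand becomes $\IL_{W_{j,r}^\ast}\IR_{W_{j,r}}$ with $W_{j,r}=\sigma_{-i/4}(e^{-r\sigma_{-i/2}}(V_j))$, which \emph{is} manifestly completely positive; one then concludes using that the cone of completely positive maps is closed under positive combinations and limits. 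The sign asymmetry between the two slots is exactly what restores the adjoint pairing, and it is absent from your symmetric sandwich $\Theta_r\Phi\Theta_r$; this is the missing idea in your argument.
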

\begin{proof}
\begin{enumerate}[(i)]
    \item Let $\Phi\in B(M_n(\IC))$. By \Cref{prop:V-trafo_inverse_matrix} we have
    \begin{equation*}
        \V(\Phi)=2\int_0^\infty \sigma_{-i/4}e^{-r\sigma_{-i/2}} \Phi \sigma_{-i/4}e^{-r\sigma_{-i/2}}\,dr.
    \end{equation*}
    Since $\sigma_{-i/4}$ is KMS-symmetric, so is $e^{-r\sigma_{-i/2}}$. Thus the KMS adjoint of $\V(\Phi)$ satisfies
    \begin{align*}
        \V(\Phi)^\dagger=2\int_0^\infty \sigma_{-i/4}e^{-r\sigma_{-i/2}} \Phi^\dagger \sigma_{-i/4}e^{-r\sigma_{-i/2}}\,dr.
    \end{align*}
    In particular, if $\Phi$ is KMS-symmetric, so is $\V(\Phi)$.
    \item If $\Phi$ is completely positive, by Kraus' theorem there exist $V_1,\dots,V_N\in M_n(\IC)$ such that 
    \begin{equation*}
        \Phi=\sum_{j=1}^N \IL_{V_j^\ast}\IR_{V_j}.
    \end{equation*}
    From \Cref{prop:V-trafo_inverse_matrix} and the identity $\sigma_z(a)^\ast=\sigma_{\overline{z}}(a^\ast)$ we deduce
    \begin{equation*}
        \V(\Phi)=\sum_{j=1}^N \int_0^\infty\IL_{\sigma_{-i/4}(e^{-r\sigma_{-i/2}}(V_j))^\ast}\IR_{\sigma_{-i/4}(e^{-r\sigma_{-i/2}}(V_j))}\,dr.
    \end{equation*}
    Since maps of the form $\IL_{A^\ast}\IR_A$ are completely positive and positive linear combinations and limits of completely positive maps are again completely positive, it follows that $\V(\Phi)$ is completely positive.
    \item The identity $\W(\id_n)=\id_n$ is immediate from the definition, from which $\V(\id_n)=\id_n$ follows directly.
    % \item Let $(\Psi_t)$ be a quantum Markov semigroup such that $\Phi$ is its generator, i.e. 
    % \begin{equation}
    %     \Phi=\lim_{t\to 0}\frac{1}{t}(id_n-\Psi_t).
    % \end{equation}
    % Because the $\V$-transform is linear and $\V(id_n)=id_n$ by (iii), $\V(\Phi)$ is given by
    % \begin{equation}
    %     \V(\Phi)=\lim_{t\to 0}\frac{1}{t}(id_n-\V(\Psi_t)). 
    % \end{equation}
    % Since (ii) shows that $\V(\Psi_t)$ is completely positive, we can now conclude that 
    \qedhere
\end{enumerate}
\end{proof}

\subsection{Derivations for KMS-Symmetric Markov Generators on Matrix Algebras}
We are now in the position to prove the existence of a (twisted) derivation that implements the Dirichlet form associated with a KMS-symmetric quantum Markov semigroup on $M_n(\IC)$. We first present an abstract version that will later be generalized to quantum Markov semigroups on arbitrary von Neumann algebras. A more explicit version tailored for matrix algebras will be discussed below.

\begin{theorem}\label{thm:existence_derivation_matrix}
Let $(\Phi_t)$ be a KMS-symmetric quantum Markov semigroup on $M_n(\IC)$ and let $\L$ denote its generator. There exists a Hilbert space $\H$, a unital $\ast$-homorphism $\pi_l\colon M_n(\IC)\to B(\H)$, a unital $\ast$-antihomorphism $\pi_r\colon M_n(\IC)\to B(\H)$, and anti-linear isometric involution $\J\colon \H\to \H$ and a linear map $\delta\colon M_n(\IC)\to \H$ satisfying
\begin{enumerate}[(i)]
    \item $\pi_l(A)\pi_r(B)=\pi_r(B)\pi_l(A)$ for all $A,B\in M_n(\IC)$,
    \item $\J(\pi_l(A)\pi_r(B)\xi)=\pi_l(B)^\ast \pi_r(A)^\ast \J\xi$ for all $A,B\in M_n(\IC)$ and $\xi\in \H$,
    \item $\delta(A^\ast)=\J\delta(A)$ for all $A\in M_n(\IC)$,
    \item $\delta(AB)=\pi_l(\sigma_{-i/4}(A))\delta(B)+\pi_r(\sigma_{i/4}(B))\delta(A)$ and
    \item $\H=\mathrm{lin}\{\pi_l(A)\delta(B)|A,B\in M_n{\IC}\}$
\end{enumerate}
such that
\begin{equation} \label{eq:deriv_sqrt_of_gen_matrix}
    \langle A,\L(B)\rangle_\rho=\langle \delta(A),\delta(B)\rangle_\H
\end{equation}
for all $A,B\in M_n(\IC)$.
\end{theorem}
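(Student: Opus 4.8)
The plan is to peel off the completely positive part of $\L$ via Alicki's theorem, replace it by its $\V$-transform so that complete positivity becomes available in the correct $\rho$-placement, and then realize $\delta$ as a coboundary on a bimodule whose inner product is built from this $\V$-transform. First I would invoke \cite[Theorem 4.4]{AC21} to write $\L(A)=GA+AG^\ast-\Psi(A)$, with $\Psi$ completely positive and KMS-symmetric and $G=(1+\sigma_{-i/2})^{-1}(\Psi(I_n))$; here $G^\ast=(1+\sigma_{i/2})^{-1}(\Psi(I_n))$ and $G+G^\ast=\Psi(I_n)$, which records the unitality constraint $\L(I_n)=0$. The decisive step is to pass to $\check\Psi=\V(\Psi)$: by \Cref{lem:properties_V-trafo_matrix} the map $\check\Psi$ is again completely positive and KMS-symmetric, so by Kraus' theorem $\check\Psi=\sum_j\IL_{V_j^\ast}\IR_{V_j}$ for finitely many $V_j\in M_n(\IC)$, and the KMS-symmetry of $\check\Psi$ lets me arrange this family compatibly with the modular adjoint so that an involution can be defined afterwards.

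For the bimodule I would take the algebraic tensor product $M_n(\IC)\otimes M_n(\IC)$ with its bare left- and right-leg multiplications, set $\delta(A)=A\otimes I_n-I_n\otimes A$, and equip the tensor product with the sesquilinear form determined by $\check\Psi$ and $\rho$, designed so that $\langle\delta(A),\delta(B)\rangle=\langle A,\L(B)\rangle_\rho$. I would then renormalize the actions by the modular group, defining $\pi_l(x)$ as left multiplication by $\sigma_{i/4}(x)$ on the first leg and $\pi_r(y)$ as right multiplication by $\sigma_{-i/4}(y)$ on the second leg; using $\sigma_z(x)^\ast=\sigma_{\bar z}(x^\ast)$, these become a unital $\ast$-homomorphism and a unital $\ast$-antihomomorphism for the form. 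The anti-unitary $\J$ is built from the leg-flip composed with the adjoint. Passing to the quotient by the kernel of the form and completing yields $\H$.

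Most of (i)--(v) is then bookkeeping. Commutation (i) holds because the two leg multiplications commute; (ii) holds because $\J$ exchanges the legs together with the adjoint, which is exactly how $\J$ swaps $\pi_l$ and $\pi_r$ with an adjoint; the twisted Leibniz rule (iv) is the elementary identity $\delta(AB)=A\cdot\delta(B)+\delta(A)\cdot B$ for the bare leg multiplications, rewritten through the modular renormalization of $\pi_l,\pi_r$, which is precisely what produces the factors $\sigma_{-i/4}(A)$ and $\sigma_{i/4}(B)$; and (v) follows since $\{A\otimes I_n\}$ together with the left action generates the tensor product, shrinking the Kraus family to a minimal one if needed. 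The relation $\delta(A^\ast)=\J\delta(A)$ in (iii) is the only algebraic point that genuinely uses the KMS-symmetry of $\check\Psi$ and the asymmetric modular weights ($\sigma_{i/4}$ versus $\sigma_{-i/4}$) carried by the two legs, which is what makes the flip-adjoint compatible with the antisymmetric coboundary.

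The heart of the matter, and the main obstacle, is twofold. First, the sesquilinear form on $M_n(\IC)\otimes M_n(\IC)$ must be positive semidefinite for $\H$ to exist, and this is exactly where complete positivity of $\check\Psi$ (\Cref{lem:properties_V-trafo_matrix}(ii)) is indispensable: the symmetric placement of $\rho^{1/2}$ in the KMS inner product cannot be reconciled with the conjugation structure of $\Psi$ itself, only with that of its $\V$-transform. Second, one must verify \eqref{eq:deriv_sqrt_of_gen_matrix}: expanding $\langle\delta(A),\delta(B)\rangle$ produces cross terms that assemble into $-\langle A,\Psi(B)\rangle_\rho$ precisely because $\Psi=\W(\check\Psi)=\tfrac12(\sigma_{i/4}\check\Psi\sigma_{-i/4}+\sigma_{-i/4}\check\Psi\sigma_{i/4})$, and diagonal (boundary) terms that must reproduce $\langle A,GB+BG^\ast\rangle_\rho$, where the specific resolvent $G=(1+\sigma_{-i/2})^{-1}(\Psi(I_n))$ appearing in Alicki's formula is exactly what forces the two contributions to match. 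Establishing this modular matching, powered throughout by the positivity-preservation of the $\V$-transform, is the crux; the Dirichlet positivity $\langle A,\L(A)\rangle_\rho\geq 0$ then comes for free, being $\norm{\delta(A)}_\H^2$.
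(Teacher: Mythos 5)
Your proposal is correct in outline, but it is not the paper's proof of \Cref{thm:existence_derivation_matrix} --- it is essentially the paper's \emph{appendix} proof of \Cref{thm:commutator_sqrt_matrix}, upgraded to carry the bimodule data. The paper never invokes \cite[Theorem 4.4]{AC21} here: it $\V$-transforms the generator itself, obtaining conditional complete negativity of $\check\L$ from complete positivity of $\check\Phi_t$ together with $\V(\id_n)=\id_n$, and then defines the form $-\tfrac12\tr(B_1^\ast\rho^{1/2}\check\L(A_1^\ast A_2)\rho^{1/2}B_2)$ only on the subspace $N=\{\sum_j A_j\otimes B_j:\sum_j A_j\sigma_{-i/2}(B_j)=0\}$, where conditional negativity gives positivity; the boundary terms never appear because $\check\L(I_n)=0$ and KMS symmetry kill them in the verification of \eqref{eq:deriv_sqrt_of_gen_matrix} via $\W(\check\L)=\L$. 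Your route instead buys positivity of the form on the \emph{whole} tensor product from complete positivity of $\check\Psi$, at the price of the nontrivial resolvent matching $\sum_j V_j^\ast\sigma_{-i/2}(V_j)=(1+\sigma_{i/2})^{-1}(\Psi(I_n))$ for the diagonal terms --- which you correctly identify as the crux, and which is exactly the appendix computation. The trade-off: your approach delivers the explicit Kraus/commutator picture of \Cref{thm:commutator_sqrt_matrix} immediately, but it leans on the finite-dimensional structure theorem and Kraus decompositions, which is why the $N$-based argument, not yours, is the one that survives the passage to general von Neumann algebras in \Cref{sec:uniformly_bounded} (there the Christensen--Evans-type decomposition is reserved solely for innerness).

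Two soft spots need repair. First, you never write down the sesquilinear form, and with the naive choice $\tfrac12\tr(B_1^\ast\rho^{1/2}\check\Psi(A_1^\ast A_2)\rho^{1/2}B_2)$ your twisted action $\pi_l(x)=\IL_{\sigma_{i/4}(x)}$ is \emph{not} a $\ast$-homomorphism: since $\sigma_{i/4}(x)^\ast=\sigma_{-i/4}(x^\ast)$, the adjoint of $\pi_l(x)$ comes out as multiplication by $\sigma_{-i/4}(x^\ast)$ rather than $\sigma_{i/4}(x^\ast)$. The modular weights must be baked into the form; equivalently, untwist via $A\otimes B\mapsto\sigma_{-i/4}(A)\otimes\sigma_{i/4}(B)$, after which your setup becomes the paper's normalization (plain leg actions, twisted $\delta(A)=\sigma_{-i/4}(A)\otimes I_n-I_n\otimes\sigma_{i/4}(A)$). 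Second, property (v) fails on the GNS space of the full tensor product: the span of $\{\pi_l(A)\delta(B)\}$ lies inside the multiplication-kernel subspace, while $I_n\otimes I_n$ has strictly positive length whenever $\check\Psi\neq 0$, so it is not reached; the fix is to cut down to the sub-bimodule generated by the range of $\delta$ (right-invariance of that span follows from your Leibniz rule (iv), and $\J$-invariance from (ii)--(iii)), not to ``shrink the Kraus family''. A final cosmetic point: the leg-flip-plus-adjoint must carry a sign, $\J_0(A\otimes B)=-B^\ast\otimes A^\ast$, or else $\J\delta(A)=-\delta(A^\ast)$ and (iii) fails.
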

\begin{proof}
First, we claim that $\check{\L}$ is a KMS-symmetric conditionally completely negative map. The KMS-symmetry follows from Lemma \ref{lem:properties_V-trafo_matrix}(i). By Lemma \ref{lem:properties_V-trafo_matrix}(ii) $\check{\Phi}_t$ is completely positive for all $t\geq 0$. Since
\begin{equation*}
    \check{\L}=\lim_{t\to 0}\frac{1}{t}(\id_n-\check{\Phi}_t)
\end{equation*}
by Lemma \ref{lem:properties_V-trafo_matrix}(iii), it follows from the definitions that $\check{\L}$ is conditionally completely negative, proving the claim. We also observe that
\begin{equation*}
    \check{\L}(I_n)=2\int_0^\infty \sigma_{-i/4}e^{-r\sigma_{-i/2}}( \L( \sigma_{-i/4}e^{-r\sigma_{-i/2}}(I_n)))\,dr=0
\end{equation*}
by Proposition \ref{prop:V-trafo_inverse_matrix}.

% By \eqref{eq:rep_KMS_generator} there exists a KMS-symmetric completely positive map $\Psi$ on $M_n(\IC)$ such that 
% \begin{equation*}
%     \L=\IL_{(1+\sigma_{-i/2})^{-1}(\Psi(I_n))}+\IR_{(1+\sigma_{i/2})^{-1}(\Psi(I_n))}-\Psi.
% \end{equation*}

Next, we define a sesquilinear form $\ip{\cdot, \cdot}_\H$ on the algebraic tensor product $M_n(\IC)\odot M_n(\IC)$ by
\begin{equation*}
    \ip{A_1\otimes B_1,A_2\otimes B_2}_\H=-\frac{1}{2}\tr(B_1^\ast \rho^{1/2}\check{\L}(A_1^\ast A_2)\rho^{1/2}B_2).
\end{equation*}
Now consider the subspace 
\begin{equation*}
    N=\left\{\sum_j A_j\otimes B_j:\sum_j A_j\sigma_{-i/2}(B_j)=0\right\}.
\end{equation*}
Because $\check{\L}$ is completely conditionally negative, we see that for any $\sum_j A_j\otimes B_j\in N$ we have
\begin{align*}
    \sum_{jk}\ip{A_j\otimes B_j,A_k\otimes B_k}_\H&=-\frac{1}{2}\sum_{jk}\tr(B_j^\ast\rho^{1/2}\check{\L}(A_j^\ast A_k)\rho^{1/2}B_k)\\
    &=-\frac{1}{2}\sum_{jk}\tr(\rho^{1/2}\sigma_{-i/2}(B_j)^\ast\check{\L}(A_j^\ast A_k)\sigma_{-i/2}(B_k)\rho^{1/2})\\
    &\geq 0.
\end{align*}
Therefore, this sesquilinear form is positive semidefinite on $N$. Now let
\begin{equation*}
    \H = N/\{u \in N|\ip{u,u}_\H=0\}.
\end{equation*}
Then $\ip{\cdot,\cdot}$ induces an inner product on $\H$, which turns $\H$ into a Hilbert space (as $\H$ is finite-dimensional). We write $\sum_j A_j\otimes_\H B_j$ for the image of $\sum_j A_j\otimes B_j$ in $\H$ under the quotient map.

Define $\pi_l$ and $\pi_r$ by
\begin{equation*}
    \pi_l(X)\sum_jA_j\otimes_\H B_j=\sum_jXA_j\otimes_\H B_j,\,\pi_r(X)\sum_jA_j\otimes_\H B_j=\sum_jA_j\otimes_\H B_jX
\end{equation*}
for $X\in M_n(\IC)$ and $\sum_j A_j\otimes B_j\in N$. These maps are well defined. Indeed, they preserve $N$, and for all $u\in N$ with $\ip{u,u}_\H=0$ we have
\begin{equation*}
    \ip{\pi_l(X)\pi_r(Y)u,\pi_l(X)\pi_r(Y)u}=\ip{\pi_l(X^\ast X)\pi_r(YY^\ast)u,u}\leq 0
\end{equation*}
by Cauchy--Schwarz. From the definitions of $\pi_l$, $\pi_r$ and $\ip{\cdot,\cdot}_\H$ we now conclude that $\pi_l$ is a unital $\ast$-homomorphism and $\pi_r$ is a unital $\ast$-antihomomorphism. 

Subsequently, we will define the anti-linear isometric involution $\J$ on $\H$. Consider the map
\begin{equation*}
    A\otimes B\mapsto -B^\ast \otimes A^\ast
\end{equation*}
from $M_n(\IC)\odot M_n(\IC)$ to itself. This is an isometry since $\check{\L}$ is KMS-symmetric. Moreover, it preserves $N$. Consequently, it acts in a well-defined manner on the equivalence classes of $\H$, and we call this map $\J$. It is clear that $\J$ is an anti-linear involution. 

Lastly, we define the map $\delta: M_n(\IC) \to \H$ by
\begin{equation*}
    \delta(A)=\sigma_{-i/4}(A)\otimes_\H I_n-I_n\otimes\sigma_{i/4}(A). 
\end{equation*}
With this definition properties (i)-(iv) are immediate from the definitions. For property (v) note that
\begin{equation*}
    \sum_j A_j\otimes_\H B_j=\sum_j\pi_l(A_j)\delta(\sigma_{-i/4}(B_j))+A_j\sigma_{-i/2}(B_j)\otimes_\H I_n=\sum_j\pi_l(A_j)\delta(\sigma_{-i/4}(B_j))
\end{equation*}
for any $\sum_j A_j\otimes_\H B_j\in \H$. 

To complete the proof of the theorem, we need to show that \eqref{eq:deriv_sqrt_of_gen_matrix} holds. Let $A,B\in M_n(\IC)$. Then we have
\begin{align}
    \ip{\delta(A),\delta(B)}_\H=&\ \ip{\sigma_{-i/4}(A)\otimes_\H I_n-I_n\otimes_\H\sigma_{i/4}(A),\sigma_{-i/4}(B)\otimes_\H I_n-I_n\otimes_\H\sigma_{i/4}(B)}\nonumber\\
    =&\frac{1}{2}\Big(\tr(\sigma_{-i/4}(A^\ast)\rho^{1/2}\check{\L}(\sigma_{-i/4}(B))\rho^{1/2})+\tr(\rho^{1/2}\check{\L}(\sigma_{i/4}(A^\ast))\rho^{1/2}\sigma_{i/4}(B))\label{expr:deriv_inpr_nonzero_terms_matrix}\\
    &-\tr(\rho^{1/2}\check{\L}(\sigma_{i/4}(A^\ast)\sigma_{-i/4}(B))\rho^{1/2})-\tr(\sigma_{-i/4}(A^\ast)\rho^{1/2}\check{\L}(I_n)\rho^{1/2}\sigma_{i/4}(B))\Big).\label{expr:deriv_inpr_zero_terms_matrix}
\end{align}
The two terms in line \eqref{expr:deriv_inpr_zero_terms_matrix} are zero because $\check{\L}$ is KMS-symmetric and $\check{\L}(I_n)=0$. For the terms in line \eqref{expr:deriv_inpr_nonzero_terms_matrix} we use the KMS-symmetry of $\check{\L}$ and the fact that $\tr(\sigma_{it}(C)D)=\tr(C\sigma_{-it}(D))$ for all $t\in \IR$ and $C,D\in M_n(\IC)$ to conclude that
\begin{align*}
    \ip{\delta(A),\delta(B)}_\H=&\ \frac{1}{2}\tr(A^\ast\rho^{1/2}\sigma_{i/4}(\check{\L}(\sigma_{-i/4}(B)))\rho^{1/2})+\frac{1}{2}\tr(\rho^{1/2}A^\ast\rho^{1/2}\sigma_{-i/4}(\check{\L}(\sigma_{i/4}(B))))\\
    =&\ \tr(A^\ast\rho^{1/2}\W(\check{\L})(B)\rho^{1/2})\\
    =&\  \ip{A,\L(B)}_\rho,
\end{align*}
as desired.
\end{proof}

As a consequence of the previous result, we get a more explicit expression for the quadratic form associated with the generator of a KMS-symmetric quantum Markov semigroup on $M_n(\IC)$. An analogous expression can be found in \cite[Eq. (5.3)]{CM17}, \cite[Prop. 2.5]{CM20} for the special case of GNS-symmetric quantum Markov semigroups.

\begin{theorem}\label{thm:commutator_sqrt_matrix}
If $(\Phi_t)$ is a KMS-symmetric quantum Markov semigroup on $M_n(\IC)$ with generator $\L$, then there exist matrices $V_1,\dots,V_N\in M_n(\IC)$ such that $\{V_j\}_{j=1}^N=\{V_j^\ast\}_{j=1}^N$ and
\begin{equation*}
    \langle A,\L(B)\rangle_\rho=\sum_{j=1}^N \langle [V_j,A],[V_j,B]\rangle_\rho
\end{equation*}
for all $A,B\in M_n(\IC)$.
\end{theorem}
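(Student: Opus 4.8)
The plan is to derive \Cref{thm:commutator_sqrt_matrix} as a corollary of \Cref{thm:existence_derivation_matrix} by exploiting the simple structure of Hilbert bimodules over the matrix algebra $M_n(\IC)$. From the previous theorem we have a Hilbert space $\H$ carrying commuting representations $\pi_l$ (a $\ast$-homomorphism) and $\pi_r$ (a $\ast$-antihomomorphism), together with the derivation $\delta$ satisfying the twisted Leibniz rule (iv) and the relation \eqref{eq:deriv_sqrt_of_gen_matrix}. The key structural fact is that any such bimodule over $M_n(\IC)$ is a direct sum of copies of the standard bimodule $M_n(\IC)$ itself (with $\pi_l(A)$ given by left multiplication and $\pi_r(B)$ by right multiplication). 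First I would use this to identify $\H$ isometrically with $\bigoplus_{j} M_n(\IC)$ equipped with a fixed multiple of the KMS inner product, so that $\pi_l(A)$ and $\pi_r(B)$ act coordinatewise as left and right multiplication.

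Under this identification, I would write the derivation as $\delta(A)=(W_j)_j$ for matrices $W_j$ depending linearly on $A$. The goal is to read off from the Leibniz rule (iv), $\delta(AB)=\pi_l(\sigma_{-i/4}(A))\delta(B)+\pi_r(\sigma_{i/4}(B))\delta(A)$, that in each coordinate the map $A\mapsto W_j$ is of commutator type. Concretely, once $\pi_l,\pi_r$ act as matrix multiplication, the combination of the twisting by $\sigma_{\pm i/4}$ with the Leibniz identity forces each coordinate of $\delta$ to have the form $A\mapsto [V_j,\,\sigma_{?}(A)]$ for some fixed matrix $V_j$, modulo absorbing the modular twist into the KMS inner product. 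The self-adjointness symmetry condition (iii), $\delta(A^\ast)=\J\delta(A)$, together with the explicit action of $\J$ from (ii), is what produces the pairing $\{V_j\}=\{V_j^\ast\}$: applying $\J$ interchanges coordinates and sends $V_j$ to (a coordinate carrying) $V_j^\ast$, so one can organize the finite family so that taking adjoints permutes it.

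The main obstacle I anticipate is bookkeeping the modular twists $\sigma_{\pm i/4}$ correctly so that the final commutators appear untwisted inside the plain KMS inner product $\langle[V_j,A],[V_j,B]\rangle_\rho$ rather than with residual $\sigma$-factors. The derivation as given satisfies a $\sigma_{\pm i/4}$-twisted Leibniz rule, whereas the target expression $\langle[V_j,A],[V_j,B]\rangle_\rho$ uses genuine commutators. Reconciling these requires either replacing the generating vectors $\delta(A)$ by $\delta(\sigma_{i/4}(A))$-type combinations, as is already done at the end of the proof of \Cref{thm:existence_derivation_matrix} via the substitution $\delta(\sigma_{-i/4}(B_j))$, or absorbing the twist into the choice of $V_j$ and using that $\sigma_z$ is multiplication by $\rho^{iz}$ so that conjugation-type twists combine cleanly with $\rho^{1/2}$ inside the trace. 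I would handle this by computing $\langle\delta(A),\delta(B)\rangle_\H$ directly in the coordinatized model and matching it term by term against $\sum_j\tr([V_j,A]^\ast\rho^{1/2}[V_j,B]\rho^{1/2})$, using cyclicity of the trace and the identity $\tr(\sigma_{it}(C)D)=\tr(C\sigma_{-it}(D))$ already employed above.

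Finally, to secure the symmetry $\{V_j\}_{j=1}^N=\{V_j^\ast\}_{j=1}^N$, I would exploit the involution $\J$: since $\J$ is anti-unitary and implements $\delta(A^\ast)=\J\delta(A)$ while reversing the roles of $\pi_l$ and $\pi_r$ by (ii), it conjugates the family of defining matrices $\{V_j\}$ to $\{V_j^\ast\}$. By choosing an orthonormal-type indexing compatible with $\J$ — pairing each coordinate with its image under $\J$ and splitting $\J$-fixed coordinates appropriately — the resulting finite list of matrices is closed under taking adjoints, which is exactly the stated condition. This step is essentially the finite-dimensional, more explicit shadow of the general uniqueness/structure statement in the main theorems, and I expect it to be routine once the coordinatization of $\H$ is in place.
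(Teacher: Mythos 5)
Your proposal follows essentially the same route as the paper's proof: coordinatize $\H\cong M_n(\IC)\otimes\IC^N$ using the representation theory of $M_n(\IC)\otimes M_n(\IC)^{\op}$ (finite-dimensionality coming from property (v)), untwist the $\sigma_{\pm i/4}$-Leibniz rule coordinatewise, and use $\J$ to arrange $\{V_j\}_{j=1}^N=\{V_j^\ast\}_{j=1}^N$. The one step you leave vague --- how the modular twist is absorbed --- is resolved in the paper exactly as you anticipate: $A\mapsto \rho^{-1/4}\delta_j(A)\rho^{-1/4}$ is a genuine derivation of $M_n(\IC)$, hence inner by the derivation theorem, so $\delta_j(A)=\rho^{1/4}[V_j,A]\rho^{1/4}$, and the $\rho^{1/4}$ factors then merge with the Hilbert--Schmidt pairing to produce the untwisted KMS inner product $\langle [V_j,A],[V_j,B]\rangle_\rho$.
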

\begin{proof}
Let $\H$, $\pi_l$, $\pi_r$, $\J$ and $\delta$ be as in the last Theorem. The map $\pi_l\otimes \pi_r$ is a unital representation of $M_n(\IC)\otimes M_n(\IC)^\op$. It follows from the representation theory of matrix algebras \cite[Proposition IV.1.2.2.]{Bla06} that there exists an auxiliary Hilbert space $H$ such that $\H\cong M_n(\IC)\otimes H$, where $M_n(\IC)$ is endowed with the Hilbert--Schmidt  inner product, and 
\begin{align*}
\pi_l(A)(B\otimes \xi)&=AB\otimes \xi,\\
\pi_r(A)(B\otimes \xi)&=BA\otimes \xi
\end{align*}
for $A,B\in M_n(\IC)$ and $\xi\in H$ under this identification.

Moreover, since $\H$ is finite-dimensional by property (v), the space $H$ is finite-dimensional, say $H=\IC^N$.

Thus $\delta(A)=\sum_{j=1}^N \delta_j(A)\otimes e_j$ with the canonical orthonormal basis $(e_j)$ on $\IC^N$. It follows from property (iv) that
\begin{align*}
    \rho^{-1/4}\delta_j(AB)\rho^{-1/4}=A\rho^{-1/4}\delta_j(B)\rho^{-1/4}+\rho^{-1/4}\delta_j(A)\rho^{-1/4}B
\end{align*}
for $A,B\in M_n(\IC)$. In other words, $A\mapsto \rho^{-1/4}\delta_j(A)\rho^{-1/4}$ is a derivation. By the derivation theorem \cite[Theorem 9]{Kap53} there exists $V_j\in M_n(\IC)$ such that $\delta_j(A)=\rho^{1/4}[V_j,A]\rho^{1/4}$.

We conclude that
\begin{equation*}
    \langle A,\L(B)\rangle_\rho=\langle \delta(A),\delta(B)\rangle_\H=\sum_{j=1}^N \tr(\delta_j(A)^\ast \delta_j(B))=\sum_{j=1}^N \langle [V_j,A],[V_j,B]\rangle_\rho.
\end{equation*} 
Since $\ip{[V_j,A],[V_j,B]}_\rho=\ip{[V_j^\ast,B^\ast],[V_j^\ast,A^\ast]}_\rho$, we have
\begin{equation*}
    \ip{\delta(A),\delta(B)}_\H=\ip{\J\delta(B),\J\delta(A)}_\H=\sum_{j=1}^N\ip{[V_j,B^\ast],[V_j,A^\ast]}_\rho=\sum_{j=1}^N\ip{[V_j^\ast,A],[V_j^\ast,B]}_\rho.
\end{equation*}
This shows that $V_1,\dots, V_N$ can be chosen such that  $\{V_j\}_{j=1}^N=\{V_j^\ast\}_{j=1}^N$.
\end{proof}

\begin{remark}
    This theorem can also be proven without the use of Theorem \ref{thm:existence_derivation_matrix}. In the appendix we include a proof of Theorem \ref{thm:commutator_sqrt_matrix} using the $\V$-transform and the structure of generators of KMS-symmetric quantum Markov semigroups described in \cite[Theorem 4.4]{AC21}. 
\end{remark}

\section{\texorpdfstring{The $\V$-Transform}{The V-Transform}}\label{sec:V-transform}

As we have seen in \Cref{sec:matrix_algebras} in the case of matrix algebras, the key ingredient to construct the derivation associated with a KMS-symmetric quantum Markov semigroup is the $\V$-transform. This remains the case for semigroups on general von Neumann algebras, but the definition and properties of the $\V$-transform become much more delicate as it involves in general unbounded operators like the analytic generator of the modular group. In particular the fact that the $\V$-transform preserves completely positive maps, which is crucial for defining an inner product, requires new arguments in this general setting.

For technical convenience, we first define the $\V$-transform for bounded operators on the Hilbert space $L^2(M)$, before we transfer it to KMS-symmetric unital completely positive maps on $M$.

Throughout this section we fix a $\sigma$-finite von Neumann algebra $M$ and a faithful normal state $\phi$ on $M$. We also write $\phi$ for the element in $L^1(M)$ representing $\phi$ so that $\phi^{1/2}$ is the unique positive vector in $L^2(M)$ representing $\phi$, $\phi^{it}\cdot\phi^{-it}$ is the modular group etc. We write $\Delta$ and $J$ for the modular operator and modular conjugation associated with $\phi$, and $L^2_+(M)$ for the standard self-dual positive cone in $L^2(M)$.

\subsection{\texorpdfstring{$\V$-Transform of Bounded Operators on $L^2(M)$}{V-Transform of Bounded Operators on L²(M)}}

In this subsection we define the $\V$-transform on $B(L^2(M))$ and discuss some of its properties. The $\V$-transform in this setting is the map
\begin{equation*}
\V\colon B(L^2(M))\to B(L^2(M)),\,T\mapsto \check T=2\int_0^\infty \Delta^{1/4}e^{-r\Delta^{1/2}}T\Delta^{1/4}e^{-r\Delta^{1/2}}\,dr.
\end{equation*}

We first prove that it is well-defined. Note that $\Delta^{1/4}e^{-r\Delta^{1/2}}$ is bounded (and self-adjoint) for every $r>0$, so that the integrand is a bounded operator and the only difficulty is integrability.

\begin{lemma}
If $T\in B(L^2(M))$ and $\xi,\eta\in L^2(M)$, then
\begin{equation*}
r\mapsto \langle \xi,\Delta^{1/4}e^{-r\Delta^{1/2}}T\Delta^{1/4}e^{-r\Delta^{1/2}}\eta\rangle
\end{equation*}
is integrable on $(0,\infty)$, and
\begin{equation*}
2\left\lvert \int_0^\infty\langle\xi,\Delta^{1/4}e^{-r\Delta^{1/2}}T\Delta^{1/4}e^{-r\Delta^{1/2}}\eta\rangle\,dr\right\rvert\leq \norm{T}\norm{\xi}\norm{\eta}.
\end{equation*}
\end{lemma}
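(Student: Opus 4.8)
The plan is to prove integrability and the norm bound by controlling the operator norm of the symmetric sandwich $\Delta^{1/4}e^{-r\Delta^{1/2}}T\Delta^{1/4}e^{-r\Delta^{1/2}}$ as a function of $r$, integrate the resulting bound, and then obtain the stated inequality with constant $\tfrac12$ after multiplying by $2$. The key elementary fact to exploit is that $\Delta^{1/4}e^{-r\Delta^{1/2}}$ is a bounded self-adjoint operator for each $r>0$, obtained by functional calculus from the (generally unbounded, positive, injective) modular operator $\Delta$. So first I would write $g_r(\lambda)=\lambda^{1/4}e^{-r\lambda^{1/2}}$ for $\lambda\in(0,\infty)$ and compute $\sup_{\lambda>0} g_r(\lambda)$ explicitly: setting $u=\lambda^{1/2}$ this is $\sup_{u>0} u^{1/2}e^{-ru}$, which is maximized at $u=\tfrac{1}{2r}$, giving $\norm{\Delta^{1/4}e^{-r\Delta^{1/2}}}=\sup_{\lambda>0}g_r(\lambda)=(2er)^{-1/2}$ by the spectral theorem.

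With this in hand, I would estimate the integrand pointwise in $r$. For the inner product, Cauchy--Schwarz and submultiplicativity of the operator norm give
\begin{equation*}
\left\lvert\langle\xi,\Delta^{1/4}e^{-r\Delta^{1/2}}T\Delta^{1/4}e^{-r\Delta^{1/2}}\eta\rangle\right\rvert\leq\norm{\Delta^{1/4}e^{-r\Delta^{1/2}}}^2\norm{T}\norm{\xi}\norm{\eta}=\frac{1}{2er}\norm{T}\norm{\xi}\norm{\eta}.
\end{equation*}
This bound alone is not integrable at infinity (it behaves like $1/r$), so a naive estimate fails and one genuinely has to use the decay of $e^{-r\Delta^{1/2}}$ more carefully. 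The fix is to not put both factors of $\Delta^{1/4}$ through the norm: instead, I would keep the full function $g_r(\lambda)^2=\lambda^{1/2}e^{-2r\lambda^{1/2}}$ together and integrate in $r$ \emph{before} taking suprema, exchanging the order of integration and the spectral integral.

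Concretely, for the diagonal case $\xi=\eta$ with $T\geq 0$, the measure $\langle\xi,(\cdot)\xi\rangle$ composed with the spectral measure of $\Delta$ gives a finite positive measure $\mu$ on $(0,\infty)$, and after the substitution $u=\lambda^{1/2}$ one computes $\int_0^\infty \lambda^{1/2}e^{-2r\lambda^{1/2}}\,dr=\tfrac{1}{2}\lambda^{1/2}/\lambda^{1/2}=\tfrac12$ — the $r$-integral of $u\,e^{-2ru}$ over $(0,\infty)$ equals $1/(2u)\cdot u=\tfrac12$, a constant independent of $\lambda$. Thus Tonelli gives $2\int_0^\infty\langle\xi,\Delta^{1/4}e^{-r\Delta^{1/2}}\Delta^{1/4}e^{-r\Delta^{1/2}}\xi\rangle\,dr=\norm{\xi}^2$, which is exactly the statement $\V(\id)=\id$ at the level of quadratic forms and simultaneously furnishes the integrable majorant. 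For general $T$ I would decompose $T$ into self-adjoint and then positive parts (or simply polarize), reducing to this positive diagonal computation, and the norm bound $\norm{T}\norm{\xi}\norm{\eta}$ follows by writing the integrand against $\xi,\eta$ and applying the pointwise Cauchy--Schwarz inequality $\lvert\langle a,Tb\rangle\rvert\leq\norm{T}\norm{a}\norm{b}$ with $a=\Delta^{1/4}e^{-r\Delta^{1/2}}\xi$, $b=\Delta^{1/4}e^{-r\Delta^{1/2}}\eta$, followed by the scalar Cauchy--Schwarz inequality $\int\norm{a_r}\norm{b_r}\,dr\leq(\int\norm{a_r}^2\,dr)^{1/2}(\int\norm{b_r}^2\,dr)^{1/2}$ and the identity just computed. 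The main obstacle is precisely recognizing that the two $\Delta^{1/4}$ factors must be kept inside the spectral integral and integrated jointly in $r$, since separating them destroys integrability at infinity; once the substitution $u=\lambda^{1/2}$ and Tonelli are invoked, the computation collapses to the clean constant $\tfrac12$ and both conclusions drop out at once.
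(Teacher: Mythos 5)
Your proof is correct and follows essentially the same route as the paper: both hinge on the spectral theorem for $\Delta^{1/2}$ (equivalently your substitution $u=\lambda^{1/2}$), Tonelli to swap the $r$-integral with the spectral integral, and the identity $\int_0^\infty \lambda e^{-2\lambda r}\,dr=\tfrac12$ (valid a.e.\ since $\Delta$ is non-singular). The only cosmetic difference is the last step: you obtain the product bound $\norm{T}\norm{\xi}\norm{\eta}$ by Cauchy--Schwarz in $L^2((0,\infty),dr)$ applied to $r\mapsto\norm{\Delta^{1/4}e^{-r\Delta^{1/2}}\xi}\,\norm{\Delta^{1/4}e^{-r\Delta^{1/2}}\eta}$, whereas the paper first bounds $2\abs{\langle a,Tb\rangle}\leq\norm{T}(\norm{a}^2+\norm{b}^2)$ and then recovers the product from the sum via the rescaling $\xi\mapsto\alpha\xi$, $\eta\mapsto\eta/\alpha$ --- the two devices are interchangeable here.
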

\begin{proof}
Let $E$ denote the spectral measure of $\Delta^{1/2}$. By the spectral theorem we have
\begin{align*}
&2\int_0^\infty \abs{\langle\xi,\Delta^{1/4}e^{-r\Delta^{1/2}}T\Delta^{1/4}e^{-r\Delta^{1/2}}\eta\rangle}\,dr\\
&\qquad\leq \norm{T}\int_0^\infty(\norm{\Delta^{1/4}e^{-r\Delta^{1/2}}\xi}^2+\norm{\Delta^{1/4}e^{-r\Delta^{1/2}}\eta}^2)\,dr\\
&\qquad=\norm{T}\int_0^\infty\int_0^\infty \lambda e^{-2\lambda r}\,d(\langle \xi,E(\lambda) \xi\rangle+\langle\eta,E(\lambda)\eta\rangle)\,dr\\
&\qquad=\norm{T}\int_0^\infty\lambda\int_0^\infty  e^{-2\lambda r}\,dr\,d(\langle \xi,E(\lambda) \xi\rangle+\langle\eta,E(\lambda)\eta\rangle)\\
&\qquad=\frac 1 2\norm{T}(\norm{\xi}^2+\norm{\eta}^2).
\end{align*}
Thus $r\mapsto \langle \xi,\Delta^{1/4}e^{-r\Delta^{1/2}}T\Delta^{1/4}e^{-r\Delta^{1/2}}\eta\rangle$ is integrable on $(0,\infty)$, and the desired inequality follows from the usual rescaling trick $\xi\mapsto \alpha\xi$, $\eta\mapsto \eta/\alpha$.
\end{proof}

\begin{definition}
For $T\in B(L^2(M))$ let $\check T$ denote the unique bounded linear operator on $L^2(M)$ such that
\begin{equation*}
\langle \xi,\check T\eta\rangle=2\int_0^\infty \langle \xi,\Delta^{1/4}e^{-r\Delta^{1/2}}T\Delta^{1/4}e^{-r\Delta^{1/2}}\eta\rangle\,dr
\end{equation*}
for all $\xi,\eta\in L^2(M)$.

We call the map
\begin{equation*}
    \V\colon B(L^2(M))\to B(L^2(M)),\,T\mapsto \check T
\end{equation*}
the $\V$-transform.
\end{definition}

Note that if $T$ commutes with the modular operator, then $\check T=T$.

\begin{proposition}
The $\V$-transform is a normal unital completely positive trace-preserving map on $B(L^2(M))$.
\end{proposition}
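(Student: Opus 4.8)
The plan is to reduce every assertion to the elementary spectral identity
\begin{equation*}
2\int_0^\infty \Delta^{1/2}e^{-2r\Delta^{1/2}}\,dr=\1,
\end{equation*}
which holds because $2\int_0^\infty\lambda e^{-2r\lambda}\,dr=1$ for every $\lambda>0$ while the spectral projection of $\Delta^{1/2}$ at $0$ vanishes (the state $\phi$ is faithful, so $\Delta$ is injective). Writing $A_r=\Delta^{1/4}e^{-r\Delta^{1/2}}$, which is bounded, positive and self-adjoint for $r>0$, the definition reads $\check T=2\int_0^\infty A_rTA_r\,dr$ as a weak integral that converges by the preceding lemma, and the displayed identity is precisely $\V(\1)=2\int_0^\infty A_r^2\,dr=\1$. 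This yields the unital property at once (it also re-expresses the lemma's normalization $2\int_0^\infty\norm{A_r\xi}^2\,dr=\norm{\xi}^2$). I would record the endpoint $\lambda=0$ carefully, since this is the only place where faithfulness of $\phi$ enters.

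For complete positivity I would argue pointwise under the integral. For fixed $r$ the map $T\mapsto A_rTA_r=A_r^\ast TA_r$ is completely positive, being a sandwich by the single operator $A_r$. Concretely, given a positive matrix $[T_{ij}]\in M_n(B(L^2(M)))$ and vectors $\xi_1,\dots,\xi_n\in L^2(M)$, one has
\begin{equation*}
\sum_{i,j}\ip{\xi_i,\check T_{ij}\xi_j}=2\int_0^\infty\sum_{i,j}\ip{A_r\xi_i,T_{ij}A_r\xi_j}\,dr,
\end{equation*}
and each integrand equals $\ip{\eta_r,[T_{ij}]\eta_r}\geq 0$ with $\eta_r=(A_r\xi_1,\dots,A_r\xi_n)$; integrating a nonnegative function preserves the sign, so $[\check T_{ij}]\geq 0$. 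The lemma, applied on $L^2(M)^{\oplus n}$, guarantees that the integrals exist.

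The remaining two properties I would obtain together by exhibiting the pre-adjoint. For a trace-class operator $\omega$ on $L^2(M)$ set $\V_\ast(\omega)=2\int_0^\infty A_r\omega A_r\,dr$. For $\omega\geq 0$ the integrand is positive and trace-class with $\norm{A_r\omega A_r}_1=\operatorname{Tr}(A_r^2\omega)$, and by the identity above $2\int_0^\infty\operatorname{Tr}(A_r^2\omega)\,dr=\operatorname{Tr}(\omega)$; hence the Bochner integral converges in trace norm, so $\V_\ast$ extends to a bounded map on the predual and satisfies $\operatorname{Tr}(\V_\ast(\omega))=\operatorname{Tr}(\omega)$. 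Since the defining formulas coincide, $\V_\ast$ is the restriction of $\V$ to trace-class operators, which gives trace preservation. Finally, the trace pairing
\begin{equation*}
\operatorname{Tr}(\check T\omega)=2\int_0^\infty\operatorname{Tr}(A_rTA_r\omega)\,dr=\operatorname{Tr}(T\V_\ast(\omega))
\end{equation*}
identifies $\V$ as the Banach-space adjoint of the bounded predual map $\V_\ast$, so $\V$ is $\sigma$-weakly continuous, i.e.\ normal.

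I expect the main obstacle to be normality: a weak integral of normal maps need not itself be normal, so it is essential to produce the genuine pre-adjoint $\V_\ast$ rather than to argue formally. The technical heart is therefore the trace-norm convergence of $\V_\ast$ on positive trace-class operators and the interchange of $\operatorname{Tr}$ with the integral, both of which I would control through the single majorant furnished by $2\int_0^\infty\operatorname{Tr}(A_r^2\omega)\,dr=\operatorname{Tr}(\omega)$ (equivalently $2\int_0^\infty\norm{A_r\xi}^2\,dr=\norm{\xi}^2$), together with the care at the spectral endpoint $\lambda=0$ noted above.
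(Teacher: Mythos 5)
Your proof is correct and follows essentially the same route as the paper: unitality via the spectral identity $2\int_0^\infty\Delta^{1/2}e^{-2r\Delta^{1/2}}\,dr=1$, complete positivity by sandwiching with the self-adjoint operators $\Delta^{1/4}e^{-r\Delta^{1/2}}$, trace preservation by the same Fubini-type computation, and normality by exhibiting the pre-adjoint $\V_\ast$ on trace-class operators and identifying $\V=(\V_\ast)^\ast$. Your explicit trace-norm Bochner estimate for $\V_\ast$ and the remark on the spectral endpoint $\lambda=0$ are slightly more detailed than the paper's presentation, but the argument is the same.
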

\begin{proof}
Let $E$ denote the spectral measure of $\Delta^{1/2}$. By the spectral theorem,
\begin{align*}
2\int_0^\infty\langle \Delta^{1/4}e^{-r\Delta^{1/2}}\xi,\Delta^{1/4}e^{-r\Delta^{1/2}}\eta\rangle\,dr&=2\int_0^\infty \int_0^\infty\lambda e^{-2\lambda r}\,d\langle \xi,E(\lambda)\eta\rangle\,dr\\
&=2\int_0^\infty\lambda\int_0^\infty e^{-2\lambda r}\,dr\,d\langle\xi,E(\lambda)\eta\rangle\\
&=\langle\xi,\eta\rangle
\end{align*}
for all $\xi,\eta\in L^2(M)$. Thus $\check 1=1$.

Since $\Delta^{1/4}e^{-r\Delta^{1/2}}$ is self-adjoint, the map $T\mapsto \Delta^{1/4}e^{-r\Delta^{1/2}} T \Delta^{1/4}e^{-r\Delta^{1/2}}$ is completely positive. Hence $\V$ is completely positive.

To prove that $\V$ is trace-preserving, let $T\in B(L^2(M))$ be positive. By the previous part, $\V(T)$ is positive and we have
\begin{align*}
    \tr(\V(T))&=2\int_0^\infty \tr(\Delta^{1/4}e^{-r\Delta^{1/2}}T\Delta^{1/4}e^{-r\Delta^{1/2}})\,dr\\
    &=2\int_0^\infty \tr(T^{1/2}\Delta^{1/2}e^{-2r\Delta^{1/2}}T^{1/2})\,dr\\
    &=\tr(T^{1/2}\V(1)T^{1/2})\\
    &=\tr(T).
\end{align*}
Interchanging the integral and the sum defining the trace is justified by Fubini's theorem since the integrand is positive in each case.

To prove that the $\V$-transform is normal, note first that it restricts to a bounded linear map $\V_\ast$ on the space of trace-class operators on $L^2(M)$ by the previous part. Moreover, if $S,T\in B(L^2(M))$ and $S$ is trace-class, then
\begin{equation*}
    \tr(\V_\ast(S)T)=2\int_0^\infty \tr(\Delta^{1/4}e^{-r\Delta^{1/2}}S\Delta^{1/4}e^{-r\Delta^{1/2}}T)\,dr=\tr(S\V(T)).
\end{equation*}
Thus $\V=(\V_\ast)^\ast$, which implies that $\V$ is normal.
\end{proof}

\begin{lemma}[Key property]\label{lem:key_V-transform}
\begin{enumerate}[(a)]
\item If $T\in B(L^2(M))$, then 
\begin{equation*}
\frac 1 2\langle\Delta^{1/4}\xi,\check T\Delta^{-1/4}\eta\rangle+\frac 1 2\langle \Delta^{-1/4}\xi,\check T\Delta^{1/4}\eta\rangle=\langle\xi,T\eta\rangle
\end{equation*}
for all $\xi,\eta\in\dom(\Delta^{1/4})\cap\dom(\Delta^{-1/4})$.

\item If $R\in B(L^2(M))$ such that
\begin{equation*}
\frac 1 2\langle\Delta^{1/4}\xi,R\Delta^{-1/4}\eta\rangle+\frac 1 2\langle \Delta^{-1/4}\xi,R\Delta^{1/4}\eta\rangle=\langle\xi,T\eta\rangle
\end{equation*}
for all $\xi,\eta\in\bigcap_{n\in\mathbb Z}\dom(\Delta^n)$, then $R=\check T$.
\end{enumerate}
\end{lemma}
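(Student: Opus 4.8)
\emph{Part (a).} The plan is to unfold the definition of $\check T$ and exploit that $\Delta^{1/4}e^{-r\Delta^{1/2}}$ is bounded and self-adjoint, so that its outer copies can be absorbed into the test vectors. For $\xi\in\dom(\Delta^{1/4})$ and $\eta\in\dom(\Delta^{-1/4})$, functional calculus gives $\Delta^{1/4}e^{-r\Delta^{1/2}}\Delta^{1/4}\xi=\Delta^{1/2}e^{-r\Delta^{1/2}}\xi$ and $\Delta^{1/4}e^{-r\Delta^{1/2}}\Delta^{-1/4}\eta=e^{-r\Delta^{1/2}}\eta$, and hence
\begin{equation*}
\tfrac12\ip{\Delta^{1/4}\xi,\check T\Delta^{-1/4}\eta}=\int_0^\infty\ip{\Delta^{1/2}e^{-r\Delta^{1/2}}\xi,Te^{-r\Delta^{1/2}}\eta}\,dr,
\end{equation*}
together with the symmetric identity for the second summand. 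The sum of the two integrands is precisely $-\frac{d}{dr}u(r)$, where $u(r)=\ip{e^{-r\Delta^{1/2}}\xi,Te^{-r\Delta^{1/2}}\eta}$, and each integrand is integrable by the preceding lemma applied to the vectors $\Delta^{\pm1/4}\xi$, $\Delta^{\mp1/4}\eta$. I would then invoke the fundamental theorem of calculus to evaluate $\int_0^\infty(-u'(r))\,dr=u(0^+)-\lim_{r\to\infty}u(r)$. Strong continuity of $(e^{-r\Delta^{1/2}})_{r\ge0}$ yields $u(0^+)=\ip{\xi,T\eta}$, while injectivity of $\Delta$ (so that $e^{-r\Delta^{1/2}}\to0$ strongly as $r\to\infty$) yields $\lim_{r\to\infty}u(r)=0$. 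This gives (a).

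\emph{Part (b).} Since $\check T$ satisfies the identity of (a) on the dense subspace $D:=\bigcap_{n\in\mathbb Z}\dom(\Delta^n)\subseteq\dom(\Delta^{1/4})\cap\dom(\Delta^{-1/4})$, the operator $S:=R-\check T$ satisfies
\begin{equation*}
\ip{\Delta^{1/4}\xi,S\Delta^{-1/4}\eta}+\ip{\Delta^{-1/4}\xi,S\Delta^{1/4}\eta}=0\qquad(\xi,\eta\in D),
\end{equation*}
and the goal becomes to show $S=0$. Fixing $\xi,\eta\in D$, I would study the function $\Psi(z):=\ip{\Delta^{\bar z}\xi,S\Delta^{-z}\eta}$. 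The subspace $D$ is invariant under every complex power $\Delta^w$ and $w\mapsto\Delta^w\xi$ is entire, so $\Psi$ is entire; here the antilinearity of the first slot of the inner product is what makes $z\mapsto\ip{\Delta^{\bar z}\xi,\cdot}$ holomorphic rather than antiholomorphic. The two summands above are $\Psi(\pm1/4)$, and replacing $\xi,\eta$ by $\Delta^s\xi,\Delta^{-s}\eta$ for $s\in\mathbb R$ turns the relation into $\Psi(1/4+s)+\Psi(-1/4+s)=0$. As this holds for all real $s$, the identity theorem promotes it to $\Psi(z+1/4)+\Psi(z-1/4)=0$ for all $z\in\IC$, that is $\Psi(z+1/2)=-\Psi(z)$, so $\Psi$ is $1$-periodic.

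The decisive step, which I expect to be the \emph{main obstacle}, is to deduce $\Psi\equiv0$ from these structural properties. The key observation is that the imaginary powers of $\Delta$ are unitary: for $z=a+ib$ one has $\norm{\Delta^{\bar z}\xi}=\norm{\Delta^a\xi}$ and $\norm{\Delta^{-z}\eta}=\norm{\Delta^{-a}\eta}$, whence $\abs{\Psi(a+ib)}\le\norm{S}\,\norm{\Delta^a\xi}\,\norm{\Delta^{-a}\eta}$ is bounded independently of $\Im z$ and locally uniformly in $\Re z$. Combined with $1$-periodicity, the resulting bound on the strip $0\le\Re z\le1$ is in fact global, so $\Psi$ is a bounded entire function and hence constant by Liouville's theorem; the anti-periodicity $\Psi(\cdot+1/2)=-\Psi(\cdot)$ then forces this constant to vanish. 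In particular $\Psi(0)=\ip{\xi,S\eta}=0$ for all $\xi,\eta\in D$, and density of $D$ (e.g.\ via $\1_{[1/k,k]}(\Delta)\zeta\to\zeta$) gives $S=0$, i.e.\ $R=\check T$. The points demanding the most care are the invariance and density of $D$ and the entireness of $\Psi$, the latter resting on the holomorphy of $w\mapsto\Delta^w\xi$ and the antilinearity of the inner product noted above.
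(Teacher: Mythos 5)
Your part (a) is essentially the paper's own proof: unfold the integral defining $\check T$, absorb $\Delta^{\pm 1/4}$ into the bounded self-adjoint operators $\Delta^{1/4}e^{-r\Delta^{1/2}}$, recognize the summed integrand as $-\frac{d}{dr}\ip{e^{-r\Delta^{1/2}}\xi,Te^{-r\Delta^{1/2}}\eta}$, and evaluate via the fundamental theorem of calculus together with $e^{-r\Delta^{1/2}}\to 0$ strongly, which holds because $\Delta$ is non-singular; no discrepancy there. In part (b) you take a genuinely different and correct route. The paper recycles the smoothing trick from (a): for $\xi,\eta\in\bigcap_{n\in\mathbb Z}\dom(\Delta^n)$ it writes $\ip{\xi,\check T\eta}$ via the defining integral, applies the hypothesized identity for $R$ to the smoothed vectors $\Delta^{1/4}e^{-r\Delta^{1/2}}\xi$ and $\Delta^{1/4}e^{-r\Delta^{1/2}}\eta$ (which lie in $\dom(\Delta^{1/4})\cap\dom(\Delta^{-1/4})$), and then repeats the derivative-and-integrate computation to conclude $\ip{\xi,\check T\eta}=\ip{\xi,R\eta}$ on a dense set --- three lines, no complex analysis. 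You instead prove uniqueness abstractly: with $S=R-\check T$ you form the entire function $\Psi(z)=\ip{\Delta^{\bar z}\xi,S\Delta^{-z}\eta}$, upgrade the relation $\Psi(1/4+s)+\Psi(-1/4+s)=0$ ($s\in\IR$, obtained by replacing $\xi,\eta$ with $\Delta^s\xi,\Delta^{-s}\eta$) to all of $\IC$ by the identity theorem, deduce the anti-periodicity $\Psi(\cdot+1/2)=-\Psi$, bound $\Psi$ on a period strip using unitarity of the imaginary powers, and finish by Liouville. The points you flag do go through: $D=\bigcap_{n\in\mathbb Z}\dom(\Delta^n)$ lies in $\dom(\Delta^a)$ for every real $a$ by spectral interpolation ($\lambda^{2a}\le\lambda^{2n}+\lambda^{-2n}$), hence is invariant under all powers $\Delta^w$ and dense via $\1_{[1/k,k]}(\Delta)\zeta\to\zeta$; $w\mapsto\Delta^w\xi$ is entire for $\xi\in D$; and $\sup_{a\in[0,1]}\norm{\Delta^a\xi}\,\norm{\Delta^{-a}\eta}<\infty$ since $\lambda^{2a}\le 1+\lambda^2$ there. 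Comparing the two: the paper's argument is shorter and stays entirely inside the semigroup formalism already set up for (a), whereas yours isolates the genuinely relevant fact --- that the equation $\frac 1 2(\Delta^{1/4}R\Delta^{-1/4}+\Delta^{-1/4}R\Delta^{1/4})=T$, interpreted weakly on $D$, has at most one bounded solution, independently of how $\check T$ was constructed --- and it does so by a Liouville-type strip argument in the same spirit as the Phragm\'en--Lindel\"of reasoning the paper itself deploys later for positivity preservation. The price is the extra care with domain invariance and holomorphy that the paper's smoothing argument avoids entirely.
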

\begin{proof}
(a) If $\xi,\eta\in \dom(\Delta^{1/4})\cap\dom(\Delta^{-1/4})$, then
\begin{align*}
&\langle \Delta^{1/4}\xi,\check T \Delta^{-1/4}\eta\rangle+\langle\Delta^{-1/4}\xi,\check T\Delta^{1/4}\eta\rangle\\
&\qquad=2\int_0^\infty(\langle \Delta^{1/2}e^{-r\Delta^{1/2}}\xi,Te^{-r\Delta^{1/2}}\eta\rangle+\langle e^{-r\Delta^{1/2}}\xi,T\Delta^{1/2}e^{-r\Delta^{1/2}}\eta\rangle)\,dr\\
&\qquad=-2\int_0^\infty \frac{d}{dr}\langle e^{-r\Delta^{1/2}}\xi,T e^{-r\Delta^{1/2}}\eta\rangle\,dr\\
&\qquad=2\langle\xi,T\eta\rangle.
\end{align*}
Here we used that since $\Delta^{1/2}$ is non-singular, $e^{-r\Delta^{1/2}}\zeta\to 0$ as $r\to\infty$ for every $\zeta\in L^2(M)$.

(b) If $\xi,\eta\in \bigcap_{n\in\mathbb Z}\dom(\Delta^n)$, then $\Delta^{1/4}e^{-r\Delta^{1/2}}\xi,\Delta^{1/4}e^{-r\Delta^{1/2}}\eta\in\dom(\Delta^{1/4})\cap\dom(\Delta^{-1/4})$ and hence
\begin{align*}
\langle\xi,\check T\eta\rangle&=2\int_0^\infty \langle \Delta^{1/4}e^{-r\Delta^{1/2}}\xi,T\Delta^{1/4}e^{-r\Delta^{1/2}}\eta\rangle\,dr\\
&=\int_0^\infty(\langle \Delta^{1/2}e^{-r\Delta^{1/2}}\xi,Re^{-r\Delta^{1/2}}\eta\rangle+\langle e^{-r\Delta^{1/2}}\xi,R\Delta^{1/2}e^{-r\Delta^{1/2}}\eta\rangle)\,dr.
\end{align*}
From here we conclude $\langle \xi,\check T\eta\rangle=\langle\xi,R\eta\rangle$ as above. Since $\bigcap_{n\in\mathbb Z}\dom(\Delta^n)$ is dense in $L^2(M)$, the equality $R=\check T$ follows.
\end{proof}

Informally, the identity from the previous lemma reads
\begin{equation*}
\frac 1 2\Delta^{1/4}\check T\Delta^{-1/4}+\frac 1 2\Delta^{-1/4}\check T\Delta^{1/4}=T.
\end{equation*}

\begin{proposition}\label{prop:properties_V-transform}
Let $T\in B(L^2(M))$.
\begin{enumerate}[(a)]
\item $\check T=\int_0^\infty \Delta^{-1/4}e^{-r\Delta^{-1/2}}T\Delta^{-1/4}e^{-r\Delta^{-1/2}}\,dr$ in the weak operator topology.
\item If $JT=TJ$, then $J\check T=\check T J$.
\item If $T\phi^{1/2}=\phi^{1/2}$, then $\check T\phi^{1/2}=\phi^{1/2}$.
\end{enumerate}
\end{proposition}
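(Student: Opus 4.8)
The plan is to prove the three parts in order, using the characterization of the $\V$-transform in \Cref{lem:key_V-transform} as the main engine for (a), and then deriving (b) from (a) together with the covariance of the modular conjugation. Part (c) will be a direct spectral computation.

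For (a), I would first check that the integral on the right-hand side converges weakly and defines a bounded operator $R$: since $\Delta^{-1/2}$ is again a non-singular positive self-adjoint operator, each $\Delta^{-1/4}e^{-r\Delta^{-1/2}}$ is bounded and self-adjoint and satisfies the same kind of spectral estimate as in the lemma used to define $\V$ (with $\Delta^{1/2}$ replaced by $\Delta^{-1/2}$), so the integrability and boundedness argument carries over verbatim. To identify $R$ with $\check T$ I would invoke the uniqueness criterion \Cref{lem:key_V-transform}(b): for $\xi,\eta\in\bigcap_{n\in\mathbb{Z}}\dom(\Delta^n)$ I would compute $\langle\Delta^{1/4}\xi,R\Delta^{-1/4}\eta\rangle+\langle\Delta^{-1/4}\xi,R\Delta^{1/4}\eta\rangle$ by moving the self-adjoint factors $\Delta^{-1/4}e^{-r\Delta^{-1/2}}$ across the inner product; the integrand then becomes, up to a constant, $-\frac{d}{dr}\langle e^{-r\Delta^{-1/2}}\xi,Te^{-r\Delta^{-1/2}}\eta\rangle$. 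The integral telescopes, and since $\Delta^{-1/2}$ is non-singular we have $e^{-r\Delta^{-1/2}}\zeta\to 0$ as $r\to\infty$, leaving exactly $\langle\xi,T\eta\rangle$. By \Cref{lem:key_V-transform}(b) this forces $R=\check T$. I expect this to be the main obstacle, precisely because it requires redoing the convergence analysis for the ``dual'' integrand and keeping careful track of the normalization constant.

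For (b), I would use that $J$ is an anti-unitary involution, so that $J\check T=\check T J$ is equivalent to $J\check T J=\check T$. Conjugating the defining integral by $J$ — which is a weakly continuous real-linear isometry of $B(L^2(M))$ and hence passes through the weak integral — and inserting $J^2=1$, each integrand becomes $(J\Delta^{1/4}e^{-r\Delta^{1/2}}J)(JTJ)(J\Delta^{1/4}e^{-r\Delta^{1/2}}J)$. The modular relation $J\Delta J=\Delta^{-1}$ gives $J\Delta^{1/4}e^{-r\Delta^{1/2}}J=\Delta^{-1/4}e^{-r\Delta^{-1/2}}$, using that $\Delta^{1/4}e^{-r\Delta^{1/2}}$ is a real Borel function of $\Delta$, while the hypothesis $JT=TJ$ gives $JTJ=T$. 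Hence $J\check T J$ equals the integral appearing in part (a), which is $\check T$ by (a), and the claim follows.

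For (c), I would use that $\phi^{1/2}$ is the modular-invariant positive representative of $\phi$, so that $\Delta\phi^{1/2}=\phi^{1/2}$ and therefore $\Delta^{1/4}\phi^{1/2}=\phi^{1/2}$ and $e^{-r\Delta^{1/2}}\phi^{1/2}=e^{-r}\phi^{1/2}$. Applying $\check T$ to $\phi^{1/2}$ and using $T\phi^{1/2}=\phi^{1/2}$, each factor $\Delta^{1/4}e^{-r\Delta^{1/2}}$ contributes a scalar $e^{-r}$ and leaves $\phi^{1/2}$ fixed, so the integrand reduces to $e^{-2r}\phi^{1/2}$; the remaining scalar integral then yields $\check T\phi^{1/2}=\phi^{1/2}$.
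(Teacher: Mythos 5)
Your overall strategy coincides with the paper's proof: part (a) via the uniqueness criterion \Cref{lem:key_V-transform}(b) applied to the integral with $\Delta$ replaced by $\Delta^{-1}$, part (b) by commuting $J$ through the integrand using $J\Delta^{1/4}=\Delta^{-1/4}J$ and $Je^{-r\Delta^{1/2}}=e^{-r\Delta^{-1/2}}J$ and then invoking (a), and part (c) by the spectral computation from $\Delta\phi^{1/2}=\phi^{1/2}$ and $T\phi^{1/2}=\phi^{1/2}$. Parts (b) and (c) are fine; your conjugated formulation $J\check TJ=\check T$ in (b) is trivially equivalent to the paper's one-sided computation $J\check T=\check TJ$.

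In (a), however, the constant you deferred (``up to a constant,'' ``keeping careful track of the normalization constant'') is precisely where the argument as written breaks, and it is worth being exact because the displayed formula---as printed in the paper as well---is off by a factor of $2$. Your telescoping computation gives, for $R=\int_0^\infty \Delta^{-1/4}e^{-r\Delta^{-1/2}}T\Delta^{-1/4}e^{-r\Delta^{-1/2}}\,dr$,
\begin{equation*}
\langle\Delta^{1/4}\xi,R\Delta^{-1/4}\eta\rangle+\langle\Delta^{-1/4}\xi,R\Delta^{1/4}\eta\rangle=-\int_0^\infty\frac{d}{dr}\langle e^{-r\Delta^{-1/2}}\xi,Te^{-r\Delta^{-1/2}}\eta\rangle\,dr=\langle\xi,T\eta\rangle,
\end{equation*}
but the hypothesis of \Cref{lem:key_V-transform}(b) carries the prefactors $\tfrac12$. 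What you have actually verified is therefore the lemma's hypothesis with $T$ replaced by $\tfrac12 T$, so the lemma forces $R=\tfrac12\check T$, not $R=\check T$. The correct identity is
\begin{equation*}
\check T=2\int_0^\infty \Delta^{-1/4}e^{-r\Delta^{-1/2}}T\Delta^{-1/4}e^{-r\Delta^{-1/2}}\,dr,
\end{equation*}
mirroring the factor $2$ in the definition of $\V$. A sanity check: for $T=1$ one has $\check T=1$, while $\int_0^\infty\Delta^{-1/2}e^{-2r\Delta^{-1/2}}\,dr=\tfrac12$. The paper's own proof of (a) makes the same slip (it defines $R$ without the $2$ while quoting a lemma whose normalization presumes it), so this is a typo inherited from the statement rather than a defect of your method. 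Note also that (b) is internally consistent evidence for the missing factor: with the $2$ reinstated in (a), your conjugation argument goes through unchanged since the constant appears on both sides of $J\check TJ=\check T$, whereas if (a) held as printed, the same computation would yield the absurdity $J\check TJ=2\check T$. With the normalization in (a) corrected (or the typo flagged), your proof is complete and is essentially the paper's.
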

\begin{proof}
\begin{enumerate}[(a)]
\item Let $R=\int_0^\infty \Delta^{-1/4}e^{-r\Delta^{-1/2}}T\Delta^{-1/4}e^{-r\Delta^{1/2}}\,dr$. The existence is justified by the same arguments as for $\check T$. Replacing $\Delta$ by $\Delta^{-1}$ in \Cref{lem:key_V-transform}~(a) we obtain
\begin{align*}
\frac1  2\langle \Delta^{1/4}\xi,R\Delta^{-1/4}\eta\rangle+\frac 1 2\langle \Delta^{-1/4}\xi,R\Delta^{1/4}\eta\rangle=\langle\xi,T\eta\rangle
\end{align*}
for $\xi,\eta\in\dom(\Delta^{1/4})\cap\dom(\Delta^{-1/4})$. Now $R=\check T$ follows from \Cref{lem:key_V-transform}~(b).
\item One has $J\Delta^{1/4}=\Delta^{-1/4}J$, and by the spectral theorem also $Je^{-r\Delta^{1/2}}=e^{-r\Delta^{-1/2}}J$. Thus
\begin{align*}
J\check T&=\int_0^\infty \Delta^{-1/4}e^{-r\Delta^{-1/2}}J T\Delta^{1/4}e^{-r\Delta^{1/2}}\,dr\\
&=\int_0^\infty \Delta^{-1/4}e^{-r\Delta^{-1/2}}T J\Delta^{1/4}e^{-r\Delta^{1/2}}\,dr\\
&=\int_0^\infty \Delta^{-1/4}e^{-r\Delta^{-1/2}} T\Delta^{-1/4}e^{-r\Delta^{-1/2}}\,dr\,J.
\end{align*}
Now $J\check T=\check T J$ follows from (a).
\item This is immediate from the definition of $\check T$.\qedhere
\end{enumerate}
\end{proof}

We are particularly interested in the action of the $\V$-transform on Markov operators. Let us first recall the definition.

We call an operator $T\in B(L^2(M))$ \emph{positivity-preserving} if $T(L^2_+(M))\subset L^2_+(M)$, and \emph{completely positivity-preserving} if the amplification $T\otimes \mathrm{id}_{M_n(\mathbb C)}$ is positivity-preserving on $L^2(M)\otimes M_n(\mathbb C)\cong L^2(M_n(M))$ for every $n\in \mathbb N$. We use the terms ``positivity-preserving'' and ``completely positivity-preserving'' instead of positive and completely positive to avoid confusion with the concept of positive operators on a Hilbert space.

\begin{definition}
An operator $T\in B(L^2(M))$ is a \emph{Markov operator} if it is completely positivity-preserving and $T\phi^{1/2}=\phi^{1/2}$.
\end{definition}

To prove that the $\V$-transform also preserves Markov operators, we need a series of lemmas. Recall that the cones $P^\sharp$ and $P^\flat$ are defined as
\begin{equation*}
P^\sharp=\overline{\{x\phi^{1/2}\mid x\in M_+\}},\;P^\flat=\overline{\{\phi^{1/2}x\mid x\in M_+\}},
\end{equation*}
and further recall that $P^\sharp$ and $P^\flat$ are dual cones, that is, $\xi\in P^\sharp$ if and only if $\langle \xi,\eta\rangle\geq 0$ for all $\eta\in P^\flat$, and vice versa.

\begin{lemma}
Let $T\in B(L^2(M))$ be positivity-preserving. If $\xi \in P^\sharp$ (resp. $\xi\in P^\flat$) and $\eta\in P^\flat$ (resp. $\eta\in P^\sharp$), then
\begin{equation*}
\Re\langle\xi,\check T\eta\rangle\geq 0.
\end{equation*}
\end{lemma}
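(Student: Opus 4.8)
The plan is to use the integral representation of $\check T$ from \Cref{prop:properties_V-transform}(a), namely
\begin{equation*}
\check T=\int_0^\infty \Delta^{-1/4}e^{-r\Delta^{-1/2}}T\Delta^{-1/4}e^{-r\Delta^{-1/2}}\,dr
\end{equation*}
(in the weak operator topology), together with the original definition
\begin{equation*}
\check T=2\int_0^\infty \Delta^{1/4}e^{-r\Delta^{1/2}}T\Delta^{1/4}e^{-r\Delta^{1/2}}\,dr.
\end{equation*}
The key observation is that $\langle\xi,\check T\eta\rangle$ is an integral of terms of the form $\langle \Delta^{1/4}e^{-r\Delta^{1/2}}\xi, T\,\Delta^{1/4}e^{-r\Delta^{1/2}}\eta\rangle$, so it suffices to control the sign of the real part of each integrand, i.e. of $\langle \zeta, T\omega\rangle$ where $\zeta=\Delta^{1/4}e^{-r\Delta^{1/2}}\xi$ and $\omega=\Delta^{1/4}e^{-r\Delta^{1/2}}\eta$. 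Since $T$ is positivity-preserving, $T\omega\in P^\sharp$ whenever $\omega\in P^\sharp$, and by duality $\langle\zeta,T\omega\rangle\geq 0$ whenever additionally $\zeta\in P^\flat$. So the heart of the matter is to understand how $\Delta^{1/4}e^{-r\Delta^{1/2}}$ moves the cones $P^\sharp$ and $P^\flat$.

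First I would recall the standard fact that the modular group acts on these cones: $\Delta^{it}P^\sharp=P^\sharp$ and $\Delta^{it}P^\flat=P^\flat$ for real $t$, and more relevantly that the half-plane analytic continuation behaves well. The crucial point is that $\Delta^{1/4}$ maps $P^\sharp$ into $P^\sharp$ in a suitable closed sense: for $x\in M_+$, $\Delta^{1/4}(x\phi^{1/2})=\Delta^{1/4}x\Delta^{-1/4}\cdot\phi^{1/2}$ corresponds to applying the analytic modular automorphism $\sigma_{-i/4}$, and one checks (using the characterization of $P^\sharp$, $P^\flat$, and the fact that $P^\natural=\Delta^{1/4}P^\sharp$ is the self-dual cone $L^2_+(M)$) that $\Delta^{1/4}$ interpolates between $P^\sharp$ and $L^2_+(M)$, while $\Delta^{-1/4}$ interpolates between $P^\flat$ and $L^2_+(M)$. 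The operators $e^{-r\Delta^{1/2}}$ commute with $\Delta$ and, being positive functions of $\Delta$, preserve each of these cones as well. The clean statement I would aim to extract is that for $\xi\in P^\sharp$ and $\eta\in P^\flat$ (or the symmetric case), the vectors $\Delta^{1/4}e^{-r\Delta^{1/2}}\xi$ and $\Delta^{1/4}e^{-r\Delta^{1/2}}\eta$ land in dual cones, possibly only after taking real parts.

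The technical obstacle I anticipate is domain/continuity bookkeeping: vectors in $P^\sharp$ or $P^\flat$ need not lie in $\dom(\Delta^{1/4})$, so the pointwise-in-$r$ argument must be justified by a density and closure argument. My plan is to first prove the inequality for $\xi,\eta$ in the dense analytic domain $\bigcap_{n\in\mathbb Z}\dom(\Delta^n)$ intersected with the respective cones (where $x\phi^{1/2}$ with $x$ analytic is a convenient dense family), where all expressions $\Delta^{z}\xi$ make literal sense and the cone-mapping computations are genuine identities. There the integrand $\Re\langle \Delta^{1/4}e^{-r\Delta^{1/2}}\xi, T\,\Delta^{1/4}e^{-r\Delta^{1/2}}\eta\rangle$ is visibly nonnegative because the two arguments lie in dual cones and $T$ preserves positivity. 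Integrating over $r$ preserves the sign, giving $\Re\langle\xi,\check T\eta\rangle\ge 0$ on the dense set. Then I would pass to general $\xi\in P^\sharp$, $\eta\in P^\flat$ by approximation: since $\check T$ is bounded and the cones are closed, choosing $\xi_n,\eta_n$ in the analytic core converging to $\xi,\eta$ and staying in the cones (achievable via the standard approximation $x\mapsto$ mollified/analytic elements, or directly by $e^{-\epsilon\Delta^{1/2}}$-smoothing which preserves the cones and maps into the analytic domain) yields the inequality in the limit because $\Re\langle\cdot,\check T\cdot\rangle$ is jointly continuous. The symmetric case ($\xi\in P^\flat$, $\eta\in P^\sharp$) follows either by the same argument with the roles of the two integral representations swapped, or by using $\Re\langle\xi,\check T\eta\rangle=\Re\langle\check T^\ast\xi,\eta\rangle=\Re\overline{\langle\eta,\check T^\ast\xi\rangle}$ together with the behavior of $\check{(\cdot)}$ under adjoints.
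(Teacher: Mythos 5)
Your proposal breaks at its central step: the claim that $e^{-r\Delta^{1/2}}$, being a positive function of $\Delta$, preserves the cones $P^\sharp$, $P^\flat$ and $L^2_+(M)$ is false, and with it the hoped-for pointwise nonnegativity of the integrand. By \Cref{prop:properties_V-transform}~(b) one has $Je^{-r\Delta^{1/2}}=e^{-r\Delta^{-1/2}}J$, so $e^{-r\Delta^{1/2}}$ does not commute with $J$ and therefore does not even preserve the real subspace $\{\xi\mid J\xi=\xi\}$ in which all three cones live; concretely, for $M=M_n(\IC)$ with a faithful state with eigenvalues $\lambda_1,\dots,\lambda_n$, it acts as the Schur multiplier $[e^{-r\sqrt{\lambda_i/\lambda_j}}]_{i,j}$, a non-Hermitian matrix, which already destroys self-adjointness (the same objection kills your proposed $e^{-\epsilon\Delta^{1/2}}$-smoothing, which does not stay in the cones). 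There is a second, independent obstruction: duality pairs $\Delta^{1/4}P^\sharp$ with $\Delta^{-1/4}P^\flat$ (both sitting densely in the self-dual cone $L^2_+(M)$), but the defining integral for $\check T$ puts $\Delta^{1/4}$ on \emph{both} slots; for instance, for $\eta=\phi^{1/2}y$ with $y\in M_+$ analytic, $\Delta^{1/4}\eta=\phi^{3/4}y\phi^{-1/4}$, which is not in $L^2_+(M)$. Switching to the representation of \Cref{prop:properties_V-transform}~(a) merely transfers the defect to the $P^\sharp$ slot, so no choice (or average) of the two representations makes the integrand nonnegative for fixed $r$: the positivity of $\Re\langle\xi,\check T\eta\rangle$ is genuinely a property of the integrated object, not of the integrand.

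The paper's proof avoids the integrand entirely and uses the symmetrized identity of \Cref{lem:key_V-transform}~(a), applied at $\xi'=\phi^{1/4}x\phi^{1/4}=\Delta^{1/4}(x\phi^{1/2})$ and $\eta'=\phi^{1/4}y\phi^{1/4}=\Delta^{-1/4}(\phi^{1/2}y)$ with $x,y\in M_+$, which yields
\begin{equation*}
\frac 1 2\langle x\phi^{1/2},\check T(\phi^{1/2}y)\rangle+\frac 1 2\langle \phi^{1/2}x,\check T(y\phi^{1/2})\rangle=\langle \phi^{1/4}x\phi^{1/4},T(\phi^{1/4}y\phi^{1/4})\rangle\geq 0,
\end{equation*}
the right-hand side being nonnegative because both arguments lie in the self-dual cone $L^2_+(M)$ and $T$ preserves it — exactly the matched $\Delta^{1/4}$-versus-$\Delta^{-1/4}$ pairing your integrand lacks. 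Since a positivity-preserving $T$ commutes with $J$, so does $\check T$ by \Cref{prop:properties_V-transform}~(b), and this shows the two summands on the left are complex conjugates of one another, so the left-hand side equals $\Re\langle x\phi^{1/2},\check T(\phi^{1/2}y)\rangle$. Your final approximation step (bounded $\check T$, closed cones, dense families $x\phi^{1/2}$ and $\phi^{1/2}y$) is the one part of the proposal that matches the paper and is sound; the pointwise-in-$r$ core, however, must be replaced by the averaged identity above.
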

\begin{proof}
Let $x,y\in M_+$. By \Cref{lem:key_V-transform}~(a) we have
\begin{equation*}
\frac 1 2 \langle x\phi^{1/2},\check T(\phi^{1/2}y)\rangle+\frac 1 2\langle \phi^{1/2}x,\check T(y\phi^{1/2})\rangle=\langle \phi^{1/4}x\phi^{1/4},T(\phi^{1/4}y\phi^{1/4})\rangle\geq 0.
\end{equation*}
Since $T$ is positivity-preserving, it commutes with $J$. Hence so does $\check T$ by \Cref{prop:properties_V-transform}~(b). If we apply this to the first summand from the previous equation, we obtain
\begin{align*}
\langle x\phi^{1/2},\check T(\phi^{1/2}y)\rangle&=\langle J\check T(\phi^{1/2}y),J(x\phi^{1/2})\rangle=\langle \check T(y\phi^{1/2}),\phi^{1/2}x\rangle.
\end{align*}
Therefore
\begin{equation*}
\Re\langle x\phi^{1/2},\check T(\phi^{1/2}y)\rangle\geq 0.
\end{equation*}
This settles the claim for $\xi=x\phi^{1/2}$ and $\eta=\phi^{1/2}y$. For arbitrary $\xi\in P^\sharp$ and $\eta\in P^\flat$ the inequality follows by approximation. The proof for $\xi\in P^\flat$ and $\eta\in P^\sharp$ is analogous.
\end{proof}

\begin{lemma}
If $R\in B(L^2(M))$ such that
\begin{align*}
\Re\langle \xi,R\eta\rangle&\geq 0,\qquad \xi\in P^\sharp,\eta\in P^\flat,\\
\Re\langle \xi,R\eta\rangle&\geq 0,\qquad \xi\in P^\flat, \eta\in P^\sharp,
\end{align*}
then $\Re\langle \xi,R\eta\rangle\geq 0$ for all $\xi,\eta\in L^2_+(M)$.
\end{lemma}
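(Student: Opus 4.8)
The plan is to deduce positivity on the self-dual cone from the two boundary hypotheses by a maximum-principle (Phragmén--Lindelöf) argument on a strip. The geometric idea is that the modular unitaries move things around: the cone $P^\sharp$ sits at ``height $0$'', the cone $P^\flat$ at ``height $1/2$'', and the self-dual cone $L^2_+(M)$ exactly at the midpoint ``height $1/4$''. Interpolating a suitable holomorphic function between the two boundary conditions will then pin down the sign of $\Re\langle\xi,R\eta\rangle$ on $L^2_+(M)$.

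First I would record the positions of the three cones under the powers of $\Delta$. For $a\in M_+$ one has $\Delta^0(a\phi^{1/2})=a\phi^{1/2}\in P^\sharp$, while, using $S=J\Delta^{1/2}$ and $a^\ast=a$, one computes $\Delta^{1/2}(a\phi^{1/2})=Ja\phi^{1/2}=\phi^{1/2}a\in P^\flat$; and $\Delta^{1/4}(a\phi^{1/2})$ runs through a dense subset of $L^2_+(M)$ as $a$ ranges over $M_+$. I also use that $\Delta^{it}$ is unitary, fixes $\phi^{1/2}$, and preserves $M_+\phi^{1/2}$ (because $\Delta^{it}x\Delta^{-it}\in M_+$ for $x\in M_+$); since it commutes with $\Delta^{1/2}$ it therefore preserves $P^\flat$ as well. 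This is what will keep the boundary values of the interpolation inside the correct cones.

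Next, fix $a,b\in M_+$ that are analytic for the modular group, so that $w\mapsto\Delta^{w}(a\phi^{1/2})$ and $w\mapsto\Delta^{w}(b\phi^{1/2})$ are entire $L^2(M)$-valued functions, and define on the closed strip $\{0\le\Re z\le 1/2\}$
\[
g(z)=\langle \Delta^{\bar z}(a\phi^{1/2}),\,R\,\Delta^{1/2-z}(b\phi^{1/2})\rangle.
\]
Expanding the analytic elements in power series (the conjugate $\bar z$ in the first slot cancels against the antilinearity of the inner product) shows that $g$ is holomorphic on the strip; and since $\Delta^{-it}$ is unitary, $\lVert\Delta^{\bar z}(a\phi^{1/2})\rVert=\lVert\Delta^{\Re z}(a\phi^{1/2})\rVert$ stays bounded for $\Re z\in[0,1/2]$, so $g$ is bounded and continuous up to the boundary. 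At the center, $g(1/4)=\langle\Delta^{1/4}(a\phi^{1/2}),R\,\Delta^{1/4}(b\phi^{1/2})\rangle$ is exactly the quantity of interest. On the line $\Re z=0$ the first argument lies in $P^\sharp$ and the second in $P^\flat$, while on $\Re z=1/2$ the two roles are exchanged; in both cases the respective hypothesis gives $\Re g\ge 0$ on the boundary.

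Since $\Re g$ is bounded and harmonic on the strip with nonnegative boundary values, Phragmén--Lindelöf yields $\Re g\ge 0$ throughout, in particular $\Re g(1/4)\ge 0$, i.e. $\Re\langle\Delta^{1/4}(a\phi^{1/2}),R\,\Delta^{1/4}(b\phi^{1/2})\rangle\ge 0$ for all analytic $a,b\in M_+$. A Gaussian regularization over the modular flow, $a\rightsquigarrow e^{-(\log\Delta)^2/4n}a$, produces analytic positive elements whose associated vectors approximate an arbitrary element of $L^2_+(M)$, and continuity of $(\xi,\eta)\mapsto\Re\langle\xi,R\eta\rangle$ upgrades the inequality to all of $L^2_+(M)$. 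I expect the main obstacle to be the technical bookkeeping that makes $g$ genuinely holomorphic and bounded on the \emph{closed} strip, namely controlling the unbounded powers $\Delta^{z}$ through modular-analytic elements, together with the density step; the maximum-principle argument itself is the conceptually clean core.
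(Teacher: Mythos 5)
Your proof is correct and follows essentially the same route as the paper: a Phragm\'en--Lindel\"of interpolation on the strip $0\le\Re z\le 1/2$, with boundary values landing in $P^\sharp\times P^\flat$ and $P^\flat\times P^\sharp$, the midpoint $z=1/4$ producing the self-dual cone, and a final density argument. The only (harmless) differences are cosmetic: the paper writes $J\Delta^z$ in the first slot and exponentiates, $f(z)=e^{-\langle J\Delta^z(x\phi^{1/2}),R\Delta^z(y\phi^{1/2})\rangle}$, so that arbitrary $x,y\in M_+$ suffice and Phragm\'en--Lindel\"of is applied to a bounded holomorphic function, whereas you use $\Delta^{\bar z}$ and $\Delta^{1/2-z}$ with modular-analytic positive elements (an assumption you do not actually need, since $a\phi^{1/2},b\phi^{1/2}\in\dom(\Delta^{1/2})$ already makes $g$ holomorphic and bounded on the closed strip) and apply the maximum principle directly to the bounded harmonic function $\Re g$.
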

\begin{proof}
Let $S=\{z\in\mathbb C\mid 0< \Re z< 1/2\}$ and
\begin{equation*}
f\colon \overline{S}\to \mathbb C,\,f(z)=e^{-\langle J\Delta^z(x\phi^{1/2}),R\Delta^z (y\phi^{1/2})\rangle}
\end{equation*}
for $x,y\in M_+$. The function $f$ is continuous on $\overline{S}$, holomorphic on $S$ and satisfies
\begin{align*}
\abs{f(z)}&=e^{-\Re\langle J\Delta^z(x\phi^{1/2}),R\Delta^z (y\phi^{1/2})\rangle}\\
&\leq e^{\norm{J\Delta^z(x\phi^{1/2})}\norm{R\Delta^z(y\phi^{1/2})}}\\
&\leq e^{\norm{R}\norm{\Delta^{\Re z}(x\phi^{1/2})}\norm{\Delta^{\Re z}(y\phi^{1/2})}}.
\end{align*}
By the spectral theorem,
\begin{equation*}
\norm{\Delta^{\Re z}(x\phi^{1/2})}^2\leq \norm{x\phi^{1/2}}^2+\norm{\Delta^{1/2}(x\phi^{1/2})}^2
\end{equation*}
and likewise for $y\phi^{1/2}$. Thus $f$ is bounded on $\overline{S}$.

If $\Re z=0$, then $\Delta^z(y\phi^{1/2})=\sigma^\phi_{\Im z}(y)\phi^{1/2}\in P^\sharp$ and $J\Delta^{z}(x\phi^{1/2})=\phi^{1/2}\sigma^\phi_{\Im z}(x)\in P^\flat$. Thus $\abs{f(z)}\leq 1$ by assumption. Similarly, $\abs{f(z)}\leq 1$ if $\Re z=1/2$.

It follows from the Phragmen--Lindelöf principle that $\abs{f(1/4)}\leq 1$, which means
\begin{equation*}
\Re\langle \Delta^{1/4}(x\phi^{1/2}),R\Delta^{1/4}(y\phi^{1/2})=\Re\langle J\Delta^{1/4}(x\phi^{1/2}),R\Delta^{1/4}(y\phi^{1/2})\rangle\rangle\geq 0.
\end{equation*}
This settles the claim for $\xi=\Delta^{1/4}(x\phi^{1/2})$, $\eta=\Delta^{1/4}(y\phi^{1/2})$ with $x,y\in  M_+$. For arbitrary $\xi,\eta\in L^2_+(M)$ the claim follows by approximation.
\end{proof}

\begin{lemma}
If $R\in B(L^2(M))$ such that $\Re\langle \xi,R\eta\rangle\geq 0$ for all $\xi,\eta\in L^2_+(M)$ and $JR=RJ$, then $R$ is positivity-preserving.
\end{lemma}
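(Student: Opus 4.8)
The plan is to exploit the self-duality of the standard cone together with the commutation relation $JR=RJ$. Fix $\eta\in L^2_+(M)$; the goal is to show $R\eta\in L^2_+(M)$. Since $L^2_+(M)$ is self-dual, this is equivalent to showing $\ip{\xi,R\eta}\geq 0$ for every $\xi\in L^2_+(M)$. By hypothesis one already has $\Re\ip{\xi,R\eta}\geq 0$, so it suffices to prove that $\ip{\xi,R\eta}$ is in fact real.

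The key observation I would use is that vectors in the standard cone are fixed by the modular conjugation, i.e.\ $J\zeta=\zeta$ for every $\zeta\in L^2_+(M)$. Combining the anti-unitarity of $J$ (so that $\ip{J\alpha,J\beta}=\overline{\ip{\alpha,\beta}}$ for all $\alpha,\beta$), the hypothesis $JR=RJ$, and $J\xi=\xi$, $J\eta=\eta$, one computes
\begin{equation*}
\overline{\ip{\xi,R\eta}}=\ip{J\xi,JR\eta}=\ip{J\xi,RJ\eta}=\ip{\xi,R\eta},
\end{equation*}
so that $\ip{\xi,R\eta}$ is real and therefore equal to $\Re\ip{\xi,R\eta}\geq 0$. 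Since this holds for every $\xi\in L^2_+(M)$, self-duality of the cone yields $R\eta\in L^2_+(M)$; as $\eta$ was arbitrary, $R$ is positivity-preserving.

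The argument is short, and the only point requiring care is the two standard structural facts about $L^2_+(M)$ on which it rests—that the cone is self-dual and that its elements are $J$-fixed—both part of the Araki--Haagerup description of the standard form. Once these are granted, the commutation $JR=RJ$ does the essential work, forcing the matrix coefficients $\ip{\xi,R\eta}$ between cone vectors to be real; no estimate beyond the given real-part inequality is then needed. In this sense the lemma is the converse to the two preceding ones, completing the equivalence between positivity preservation of $R$ and nonnegativity of $\Re\ip{\xi,R\eta}$ on the cones, under the standing assumption $JR=RJ$.
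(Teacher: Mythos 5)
Your proof is correct and follows essentially the same route as the paper: using $J\xi=\xi$, $J\eta=\eta$ for cone vectors together with the anti-unitarity of $J$ and $JR=RJ$ to conclude that $\langle\xi,R\eta\rangle$ is real, hence nonnegative, and then invoking self-duality of $L^2_+(M)$. Nothing to add.
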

\begin{proof}
For $\xi,\eta\in L^2_+(M)$ we have $J\xi=\xi$, $J\eta=\eta$, and hence
\begin{equation*}
\langle R\eta,\xi\rangle=\langle J\xi,JR\eta\rangle=\langle \xi,R\eta\rangle.
\end{equation*}
Therefore $\langle \xi,R\eta\rangle$ is real and thus positive by assumption. As $L^2_+(M)$ is self-dual, the claim follows.
\end{proof}

\begin{proposition}\label{prop:V-trafo_Markov}
The $\V$-transform maps symmetric Markov operators to symmetric Markov operators.
\end{proposition}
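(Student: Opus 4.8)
The plan is to verify separately the three conditions defining a symmetric Markov operator for $\check T$: invariance of $\phi^{1/2}$, symmetry (i.e.\ self-adjointness on $L^2(M)$), and complete positivity-preservation. The first two are immediate, and the third is the crux, handled by reducing to the positivity-preserving case already established in the preceding lemmas and then amplifying.

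First, $\check T\phi^{1/2}=\phi^{1/2}$ is exactly \Cref{prop:properties_V-transform}~(c), since $T$ being Markov means $T\phi^{1/2}=\phi^{1/2}$. For symmetry I would note that each operator $\Delta^{1/4}e^{-r\Delta^{1/2}}$ is self-adjoint, so taking adjoints inside the defining integral gives $\V(T)^\ast=\V(T^\ast)$ for every $T\in B(L^2(M))$. A symmetric Markov operator satisfies $T=T^\ast$, whence $\check T^\ast=\V(T^\ast)=\V(T)=\check T$, so $\check T$ is symmetric.

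The substance is complete positivity-preservation. The three lemmas immediately preceding this proposition combine into the statement that $\V$ sends any positivity-preserving operator commuting with $J$ to a positivity-preserving operator: the first lemma yields $\Re\ip{\xi,\check T\eta}\geq 0$ on the cone pairs $(P^\sharp,P^\flat)$ and $(P^\flat,P^\sharp)$, the second promotes this to all $\xi,\eta\in L^2_+(M)$, and the third — using that a positivity-preserving $T$ commutes with $J$, hence so does $\check T$ by \Cref{prop:properties_V-transform}~(b) — concludes that $\check T$ is positivity-preserving. To upgrade to \emph{complete} positivity-preservation, I would apply this very statement not to $(M,\phi)$ but to the amplified pair $(M_n(M),\phi\otimes\tau_n)$, where $M_n(M)=M\otimes M_n(\mathbb{C})$ and $\tau_n$ is the normalized trace. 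Since $\tau_n$ is tracial, the modular data are $\Delta_{\phi\otimes\tau_n}=\Delta_\phi\otimes 1$ and $J_{\phi\otimes\tau_n}=J_\phi\otimes J_{\tau_n}$, and the relevant cone is $L^2_+(M_n(M))$. Because the $M_n$-direction carries a trivial modular operator, $\Delta_{\phi\otimes\tau_n}^{1/4}e^{-r\Delta_{\phi\otimes\tau_n}^{1/2}}=(\Delta_\phi^{1/4}e^{-r\Delta_\phi^{1/2}})\otimes 1$ is an elementary tensor, so the defining integral for the $\V$-transform $\V^{(n)}$ of $M_n(M)$ factors and gives $\V^{(n)}(T\otimes\mathrm{id}_{M_n})=\check T\otimes\mathrm{id}_{M_n}$.

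Now if $T$ is completely positivity-preserving, then $T\otimes\mathrm{id}_{M_n}$ is positivity-preserving on $L^2(M_n(M))$ and commutes with $J_{\phi\otimes\tau_n}$ for every $n$; applying the positivity-preservation statement to $\V^{(n)}$ shows that $\check T\otimes\mathrm{id}_{M_n}=\V^{(n)}(T\otimes\mathrm{id}_{M_n})$ is positivity-preserving for all $n$, i.e.\ $\check T$ is completely positivity-preserving. Together with the two earlier points, $\check T$ is a symmetric Markov operator. I expect the \textbf{main obstacle} to be this last step: one must recognize that the entire lemma chain is stated for an arbitrary $\sigma$-finite pair and therefore applies verbatim to $(M_n(M),\phi\otimes\tau_n)$, and one must verify the factorization $\V^{(n)}(T\otimes\mathrm{id}_{M_n})=\check T\otimes\mathrm{id}_{M_n}$, which hinges precisely on the triviality of the modular operator in the $M_n$-direction forced by choosing a trace there.
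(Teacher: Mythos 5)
Your proof is correct and takes essentially the same route as the paper: the three preceding lemmas give positivity-preservation of $\check T$, amplification to $(M_n(M),\phi\otimes\tau_n)$ gives complete positivity-preservation, and \Cref{prop:properties_V-transform}~(c) gives $\check T\phi^{1/2}=\phi^{1/2}$. You merely spell out two points the paper leaves implicit --- the symmetry of $\check T$ via $\V(T)^\ast=\V(T^\ast)$, and the factorization $\V^{(n)}(T\otimes\mathrm{id}_{M_n(\IC)})=\check T\otimes\mathrm{id}_{M_n(\IC)}$ forced by $\Delta_{\phi\otimes\tau_n}=\Delta_\phi\otimes 1$ --- and both check out.
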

\begin{proof}
If $T\in B(L^2(M))$ is positivity-preserving, it follows from the previous three lemmas that $\check T$ is positivity-preserving as well. If $T$ is completely positivity-preserving, the same argument applied to the amplifications $T\otimes\mathrm{id}_{M_n(\mathbb C)}$ shows that $\check T$ is completely positivity-preserving. That $T\phi^{1/2}=\phi^{1/2}$ implies $\check T\phi^{1/2}=\phi^{1/2}$ was established in \Cref{prop:properties_V-transform}~(c).
\end{proof}

\subsection{\texorpdfstring{$\V$-Transform of KMS-Symmetric Operators on $M$}{V-Transform of KMS-Symmetric Operators on M}}

Formally, the $\V$-transform of a bounded linear operator $\Phi$ on $M$ should be given by
\begin{equation*}
    2\int_0^\infty \sigma_{-i/4}e^{-r\sigma_{-i/2}}\Phi \sigma_{-i/4}e^{-r\sigma_{-i/2}}\,dr,
\end{equation*}
where we simply replaced the modular operator in the definition of $\V$ by the analytic generator of the modular group on $M$. However, it seems hard to make this formula rigorous. It is not even clear if $\sigma_{-i/2}$ generates a semigroup on $M$ in a suitable sense.

Instead, we take a different approach that relies on the correspondence between certain operators in $B(M)$ and operators in $B(L^2(M))$. This way we can only define the $\V$-transform for KMS-symmetric unital completely positive maps and generators of KMS-symmetric quantum Markov semigroups, but this suffices for our purposes.

Let us first recall the definition of a KMS-symmetric map in this setting. A linear map $\Phi\colon M\to M$ is called \emph{KMS-symmetric} (with respect to $\phi$) if 
\begin{equation*}
\langle J\Phi(x)^\ast J\phi^{1/2},y\phi^{1/2}\rangle=\langle Jx^\ast J \phi^{1/2},\Phi(y)\phi^{1/2}\rangle
\end{equation*}
for all $x,y\in M$. Using the trace-like functional $\tr$ on the Haagerup $L^1$ space, this can compactly be rewritten as
\begin{equation*}
    \tr(\Phi(x)^\ast \phi^{1/2}y\phi^{1/2})=\tr(x^\ast \phi^{1/2}\Phi(y)\phi^{1/2})
\end{equation*}
in analogy with the definition for matrix algebras.

There is the following correspondence between symmetric completely positivity-preserving maps on $L^2(M)$  and KMS-symmetric completely positive maps on $M$ \cite[Section 2]{Cip97}: If $\Phi$ is KMS-symmetric unital completely positive map on $M$, then there exists a unique bounded linear operator $T$ on $L^2(M)$ such that
\begin{equation*}
    T(\phi^{1/4}x\phi^{1/4})=\phi^{1/4}\Phi(x)\phi^{1/4}
\end{equation*}
for all $x\in M$. This operator is a symmetric Markov operator, which we denote by $\Phi^{(2)}$.

Conversely, if $T$ is a symmetric Markov operator on $L^2(M)$, then there exists a KMS-symmetric unital completely positive map $\Phi$ on $M$ such that $\Phi^{(2)}=T$.

If $\Phi$ is a KMS-symmetric unital completely positive map on $M$, then $\V(\Phi^{(2)})$ is a symmetric Markov operator by \Cref{prop:V-trafo_Markov}. This justifies the following definition.

\begin{definition}
Let $\Phi$ be a KMS-symmetric unital completely positive map on $M$. Its $\V$-transform $\check\Phi$ is the unique KMS-symmetric unital completely positive map $\Psi$ on $M$ such that
\begin{equation*}
\V(\Phi^{(2)})=\Psi^{(2)}.
\end{equation*}
We also write $\V(\Phi)$ for $\check\Phi$.
\end{definition}

The main object of interest of this article are semigroups of unital completely positive maps. While the $\V$-transform preserves unitality and complete positivity, it does not, in general, preserve the semigroup property. We will discuss in the following how one can still $\V$-transform generators of a class of such semigroups in a way that preserves complete positivity of the generated semigroup. We start by recalling some definitions.

A \emph{quantum Markov semigroup} is a family $(\Phi_t)_{t\geq 0}$ of normal unital completely positive maps on $M$ such that
\begin{itemize}
    \item $\Phi_0=\id_M$,
    \item $\Phi_{s+t}=\Phi_s\Phi_t$ for all $s,t\geq 0$,
    \item $\Phi_t\to \id_M$ in the pointwise weak$^\ast$ topology as $t\to 0$.
\end{itemize}
If $\Phi_t\to \id_M$ in operator norm as $t\to 0$, then $(\Phi_t)$ is called \emph{uniformly continuous}.

The generator of $(\Phi_t)$ is the weak$^\ast$ closed and densely defined operator $\L$ given by
\begin{align*}
    \dom(\L)&=\left\{x\in M : \lim_{t\to 0}\frac 1 t(x-\Phi_t(x))\text{ exists in the weak* topology}\right\}\\
    \L(x)&=\lim_{t\to 0}\frac 1 t(x-\Phi_t(x)) \text{ in the weak* topology}.
\end{align*}
The semigroup $(\Phi_t)$ is uniformly continuous if and only if the generator is bounded. The generators of uniformly continuous quantum Markov semigroups can be characterized as follows \cite[Theorem 14.7]{EL77a}: A bounded operator $\L$ on $M$ is the generator of a uniformly continuous quantum Markov semigroup if and only it is normal, $\L(1)=0$ and $\L$ is \emph{conditionally negative definite}, that is,
\begin{equation*}
    \sum_{j=1}^n y_j^\ast\L(x_j^\ast x_k)y_k\leq 0
\end{equation*}
whenever $x_1,\dots,x_n,y_1,\dots,y_n\in M$ with $\sum_{j=1}^n x_j y_j=0$.

Let $(\Phi_t)$ be a uniformly continuous KMS-symmetric Markov semigroup on $M$ and $(T_t)$ the associated symmetric Markov semigroup on $L^2(M,\phi)$. Let $\L$ and $\L_2$ denote the generator of $(\Phi_t)$ on $M$ and $(T_t)$ on $L^2(M,\phi)$, respectively. By the uniform continuity assumption, both are bounded linear operators. Thus we can form the $\V$-transform of $\L_2$, and the continuity of the $\V$-transform implies
\begin{equation*}
\check\L_2=\lim_{t\to 0}\frac 1 t(1-\check T_t)
\end{equation*}
in operator norm. However, since $(\check T_t)$ is not a semigroup in general, this does not imply directly that $\check \L_2$ generates a symmetric Markov semigroup. The lack of semigroup property can be taken care of by a suitable rescaling of the time parameter. More precisely, we have the following result.

\begin{proposition}
If $(T_t)$ is a symmetric Markov semigroup on $L^2(M,\phi)$, then the semigroup generated by $\check \L_2$ satisfies
\begin{equation*}
e^{-t\check \L_2}\xi=\lim_{n\to\infty}\check T_{t/n}^n\xi
\end{equation*}
for all $t\geq 0$ and $\xi\in L^2(M,\phi)$. In particular, $(e^{-t\check\L_2})$ is a symmetric Markov semigroup.
\end{proposition}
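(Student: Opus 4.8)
The plan is to treat this as the Lie--Trotter/Chernoff product formula and to exploit that the class of symmetric Markov operators is closed under the operations involved. Writing $\check T_s=\V(T_s)$, I would first record three facts. By \Cref{prop:V-trafo_Markov} each $\check T_s$ is a symmetric Markov operator, hence a self-adjoint contraction on $L^2(M)$ (symmetric Markov operators are $L^2$-contractions, by interpolation between their $L^1$- and $L^\infty$-contractivity), and $\check T_0=\V(1)=1$. Since $\L_2=\lim_{s\to 0}\frac 1 s(1-T_s)$ in operator norm by uniform continuity and $\V$ is bounded, the preceding paragraph yields $\frac 1 s(1-\check T_s)=\V\!\big(\tfrac 1 s(1-T_s)\big)\to\check\L_2$ in operator norm. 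Because each $\check T_s$ is a self-adjoint contraction, $1-\check T_s\geq 0$, so the norm limit satisfies $\check\L_2\geq 0$; hence $(e^{-t\check\L_2})$ is a norm-continuous semigroup of self-adjoint contractions.

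For the product formula I would use the telescoping identity
\begin{equation*}
\check T_{t/n}^{\,n}-e^{-t\check\L_2}=\sum_{k=0}^{n-1}\check T_{t/n}^{\,n-1-k}\big(\check T_{t/n}-e^{-(t/n)\check\L_2}\big)e^{-(kt/n)\check\L_2},
\end{equation*}
and bound it using $\norm{\check T_{t/n}}\leq 1$ and $\norm{e^{-(kt/n)\check\L_2}}\leq 1$, which gives $\norm{\check T_{t/n}^{\,n}-e^{-t\check\L_2}}\leq n\,\norm{\check T_{t/n}-e^{-(t/n)\check\L_2}}$. From $\check T_{t/n}=1-\frac tn\check\L_2+o(1/n)$ (the norm differentiability above) and $e^{-(t/n)\check\L_2}=1-\frac tn\check\L_2+O(1/n^2)$ one gets $\norm{\check T_{t/n}-e^{-(t/n)\check\L_2}}=o(1/n)$, so the right-hand side tends to $0$ and in fact $\check T_{t/n}^{\,n}\to e^{-t\check\L_2}$ in operator norm---stronger than the claimed strong convergence. (For non-uniformly-continuous semigroups one would instead invoke Chernoff's theorem, with the contraction bound playing the same role.)

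It then remains to identify the limit as a symmetric Markov operator. For each $n$ the power $\check T_{t/n}^{\,n}$ is itself a symmetric Markov operator: as a power of the \emph{single} self-adjoint operator $\check T_{t/n}$ it is self-adjoint and a contraction, it is unital since $\check T_{t/n}\phi^{1/2}=\phi^{1/2}$, and it is completely positivity-preserving as a composition of such maps. I would then note that the class of symmetric Markov operators is closed under strong limits: self-adjointness passes to strong limits of uniformly bounded operators, the cone $L^2_+(M)$ (and each of its amplifications) is closed, and $e^{-t\check\L_2}\phi^{1/2}=\lim_n\check T_{t/n}^{\,n}\phi^{1/2}=\phi^{1/2}$. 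Hence $e^{-t\check\L_2}$ is a symmetric Markov operator for every $t\geq 0$, and as $(e^{-t\check\L_2})$ is a semigroup, it is a symmetric Markov semigroup.

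The main obstacle is exactly that $(\check T_t)$ fails to be a semigroup, so $\check T_t$ cannot be identified with $e^{-t\check\L_2}$ directly and one must pass through the product formula. The telescoping estimate crucially relies on the contraction property $\norm{\check T_s}\leq 1$---without it the bound degenerates---and on the observation that taking powers of a single self-adjoint operator preserves self-adjointness, which is what lets symmetry (and hence Markovianity) survive the limit, whereas a composition of distinct symmetric Markov operators need not be symmetric.
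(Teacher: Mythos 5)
Your proof is correct, and it reaches a slightly stronger conclusion than the paper by a more self-contained route. The paper's own proof is shorter: it notes that $\norm{\check T_t}\leq\norm{T_t}\leq 1$ (because the $\V$-transform, being unital completely positive on $B(L^2(M))$, is a contraction) and then simply cites the Chernoff product formula \cite[Theorem III.5.2]{EN00} to get strong convergence $\check T_{t/n}^n\xi\to e^{-t\check\L_2}\xi$, concluding exactly as you do that $e^{-t\check\L_2}$ is a symmetric Markov operator since each $\check T_{t/n}^n$ is and this class is stable under strong limits. What you do differently is to exploit the uniform continuity of $(T_t)$ (which is indeed in force in this section, since $\L_2$ is bounded): the norm differentiability $\frac 1 s(1-\check T_s)\to\check\L_2$, combined with the telescoping identity and the contraction bounds, yields convergence $\check T_{t/n}^n\to e^{-t\check\L_2}$ in operator norm, which is stronger than the strong convergence stated in the proposition, and makes the proof independent of Chernoff's theorem; the paper's citation-based argument, by contrast, is the one that would survive verbatim without uniform continuity, as you correctly observe in your parenthetical. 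Two small remarks. First, your justification that $\check T_s$ is an $L^2$-contraction via $L^1$--$L^\infty$ interpolation is valid but heavier than necessary: the contractivity of $\V$ itself gives $\norm{\check T_s}\leq\norm{T_s}\leq 1$ directly, without invoking the correspondence with maps on $M$ and noncommutative interpolation. Second, your derivation of $\check\L_2\geq 0$ from $1-\check T_s\geq 0$ (using self-adjointness of $\check T_s$, which the $\V$-transform visibly preserves) is a nice touch that the paper leaves implicit when it asserts $\norm{e^{-t\check\L_2}}\leq 1$; it is needed for the telescoping bound and you supply it correctly.
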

\begin{proof}
Since the $\V$-transform is a contraction, we have $\norm{\check T_t}\leq \norm{T_t}\leq 1$ for all $t\geq 0$. It follows from the Chernoff product formula [Engel, Nagel, Theorem III.5.2] that $e^{-t\check \L_2}\xi=\lim_{n\to\infty}\check T_{t/n}^n\xi$ for all $t\geq 0$ and $\xi\in L^2(M,\phi)$. Finally, as $\check T_s$ is a symmetric Markov operator for all $s\geq 0$, so is $e^{-t\check \L_2}$ for all $t\geq 0$.
\end{proof}

As a consequence of the previous theorem, there exists a unique KMS-symmetric quantum Markov semigroup $(\Psi_t)$ on $M$ such that
\begin{equation*}
\phi^{1/4}\Psi_t(x)\phi^{1/4}=e^{-t\check \L_2}(\phi^{1/4}x\phi^{1/4})
\end{equation*}
for all $x\in M$ and $t\geq 0$. Moreover, $(\Psi_t)$ is uniformly continuous. This justifies the following definition.

\begin{definition}
If $\L$ is the generator of a uniformly continuous KMS-symmetric Markov semigroup on $M$, its $\V$-transform $\check \L$ is the unique normal linear operator on $M$ such that
\begin{equation*}
\phi^{1/4}\check \L(x)\phi^{1/4}=\check\L_2(\phi^{1/4}x\phi^{1/4})
\end{equation*}
for all $x\in M$.
\end{definition}

By the discussion above, $\check \L$ is again the generator of a KMS-symmetric quantum Markov semigroup on $M$. Note moreover that since $\L(1)=0$, the only completely positive generator of a quantum Markov semigroup is $\L=0$. Therefore there is no conflict between our definitions of the $\V$-transform of completely positive maps and Markov generators.

\section{Derivations for Uniformly Continuous Quantum Markov Semigroups}\label{sec:uniformly_bounded}

In this section we study the correspondence between uniformly KMS-symmetric quantum Markov semigroups and certain twisted derivations with values in bimodules. We show that -- like in the case of matrix algebras -- every uniformly continuous KMS-symmetric quantum Markov semigroup gives rise to a derivation (\Cref{thm:existence:deriv_bounded}) and that this derivation is unique (\Cref{thm:uniqueness}).

Throughout this section let $M$ be a von Neumann algebra and $\phi$ a faithful normal state on $M$. KMS symmetry is always understood with respect to this state $\phi$ and multiplication of elements in $L^2(M)$ is understood as the multiplication induced by the full left Hilbert algebra associated with $\phi$. For a left-bounded vector $a\in L^2(M)$ we write $\pi_l(a)$ for the bounded operator of left multiplication by $a$. Likewise, if $a\in L^2(M)$ is right-bounded, we write $\pi_r(a)$ for the right multiplication operator.

\subsection{Existence and Innerness}

After we established the existence and properties of the $\V$-transform in the previous section, the proof for the existence of a derivation associated with a uniformly continuous KMS-symmetric quantum Markov semigroup follows the same strategy as in the finite-dimensional case. 

To establish that the derivation is inner, we need the following result, which is an easy consequence of the Christensen--Evans theorem \cite{CE79}. Recall that a bounded linear map between $C^\ast$-algebras is called \emph{decomposable} if it is a linear combination of completely positive maps \cite{Haa85}.

\begin{proposition}\label{prop:cnd_decomposable}
Every generator of a uniformly continuous quantum Markov semigroup on $M$ is the difference of two normal completely positive maps. In particular, it is decomposable.
\end{proposition}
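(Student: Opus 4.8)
The plan is to invoke the Christensen--Evans theorem, which characterizes generators of uniformly continuous quantum Markov semigroups on a von Neumann algebra $M$. That theorem states that if $\L$ is such a generator, then it can be written in the form
\begin{equation*}
\L(x) = -\Psi(x) + k^\ast x + x k,
\end{equation*}
where $\Psi\colon M\to M$ is a normal completely positive map and $k\in M$ is a fixed element (the dissipative part plus a Hamiltonian-type correction). First I would recall this decomposition precisely in the form needed here, namely that the generator $\L$ (with the sign convention $e^{-t\L}=\Phi_t$, so that $-\L$ is the ``generator'' in the semigroup-theory sense used in Christensen--Evans) decomposes into a completely positive piece $\Psi$ and a piece implemented by left and right multiplication by an element of $M$.

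The key observation is that both summands are then manifestly difference-of-completely-positive. The map $x\mapsto \Psi(x)$ is completely positive by hypothesis. For the remaining part $x\mapsto k^\ast x + x k$, I would split $k$ into its real and imaginary (self-adjoint) parts, or more directly observe that a map of the form $x\mapsto a^\ast x + x a$ for fixed $a\in M$ is a linear combination of the two completely positive maps $x\mapsto b^\ast x b'$ type building blocks; concretely, writing $h=\frac12(k+k^\ast)$ and using that $x\mapsto h x + x h = \frac12\big((1+h)^\ast x (1+h) - x - h^\ast x h - \ldots\big)$ can be arranged as a difference of two completely positive maps via a standard polarization of multiplication operators. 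The cleanest route is to note that $x\mapsto a^\ast x$ and $x\mapsto x a$ are each completely bounded and decomposable, since any map of the form $x\mapsto \sum_i a_i^\ast x b_i$ is decomposable and elementary maps $x\mapsto a^\ast x a$ are completely positive; the Hamiltonian part then falls out as a linear combination of such.

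Assembling these pieces, $\L = -\Psi + (\text{decomposable Hamiltonian part})$ exhibits $\L$ as a difference of two normal completely positive maps, which is exactly the claim. I would take care to verify normality: $\Psi$ is normal by the Christensen--Evans statement, and the multiplication maps $x\mapsto k^\ast x$, $x\mapsto xk$ are automatically normal (weak$^\ast$ continuous) for $k\in M$, so the completely positive summands into which the Hamiltonian part is decomposed inherit normality. The final sentence, that $\L$ is decomposable, is then immediate from the definition of decomposability as a linear combination of completely positive maps.

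The main obstacle, such as it is, lies less in mathematical depth than in bookkeeping: one must match the sign and ordering conventions of Christensen--Evans to those used here (where $\L=\lim_{t\to 0}\frac1t(\id-\Phi_t)$), and one must arrange the non-completely-positive Hamiltonian term $x\mapsto k^\ast x + x k$ explicitly as a genuine difference of \emph{completely positive} maps rather than merely a completely bounded or decomposable map, since the proposition asserts the stronger two-term form. This is routine but is the only point requiring a small argument; everything else is a direct citation of \cite{CE79}.
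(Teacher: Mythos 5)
Your proposal is correct and pivots on the same tool as the paper's proof: both invoke the Christensen--Evans theorem \cite[Theorem 3.1]{CE79} to write $\L(x)=k^\ast x+xk-\Phi(x)$ with $\Phi$ completely positive and $k\in M$. The difference lies entirely in the last step. The paper disposes of the multiplication part $x\mapsto k^\ast x+xk$ by citing \cite[Proposition 6.10]{Pis20}, whereas you propose an explicit polarization; your route is more self-contained, but as written it needs two small repairs. First, your displayed identity carries a spurious factor $\frac{1}{2}$ and trails off in an ellipsis; the exact identity is $hx+xh=(1+h)x(1+h)-x-hxh$ for $h=h^\ast$, exhibiting the self-adjoint part as a difference of manifestly normal completely positive maps (and note that your ``building blocks'' $x\mapsto b^\ast x b'$ are not completely positive unless $b'=b$). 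Second, you never treat the skew-adjoint part of $k$: writing $k=h+ig$ with $h,g$ self-adjoint gives $k^\ast x+xk=hx+xh+i(xg-gx)$, and the commutator term needs its own identity, e.g.\ $i(xg-gx)=(g+i)x(g+i)^\ast-gxg-x$, again a difference of completely positive maps. Alternatively, since $\L$ is Hermitian-preserving, mere decomposability plus taking real parts already yields the two-term form, so the ``stronger two-term form'' you worry about is automatic here. One more nitpick: Christensen--Evans does not hand you normality of $\Phi$; as in the paper, derive it from $\Phi(x)=k^\ast x+xk-\L(x)$, all of whose terms are normal. With these routine repairs your argument is a complete, citation-free alternative to the paper's appeal to Pisier, at the cost of a little explicit algebra.
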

\begin{proof}
Let $\L$ be a normal conditionally negative definite map on $M$. By \cite[Theorem 3.1]{CE79} there exists $k\in M$ and a completely positive map $\Phi\colon M\to M$ such that
\begin{equation*}
    \L(x)=k^\ast x+xk-\Phi(x)
\end{equation*}
for all $x\in M$. Since $\L$ is normal, so is $\Phi$. It follows from \cite[Proposition 6.10]{Pis20} that $\L$ is the difference of two normal completely positive maps.
\end{proof}

For the following result, let us recall the notion of correspondences (see \cite[Chapter 5, Appendix B]{Con94} for example). An \emph{$M$-$M$-correspondence} or simply \emph{correspondence} in our case is a Hilbert space $\H$ endowed with normal unital $\ast$-homomorphisms $\pi_l^\H\colon M\to B(\H)$, $\pi_r^\H\colon M^\op\to B(\H)$ such that $\pi_l^\H(M)$ and $\pi_r^\H(M^\op)$ commute. We write $x\cdot\xi\cdot y$ or simply $x\xi y$ for $\pi_l^\H(x)\pi_r^\H(y^\op)\xi$.

Every correspondence gives rise to a representation of the \emph{binormal tensor product} $M\otimes_\bin M^\op$, which is defined as follows (see \cite{EL77b}). A linear functional $\omega$ from the algebraic tensor product $M\odot M^\op$ to $\IC$ is called a \emph{binormal state} if $\omega(u^\ast u)\geq 0$ for all $u\in M\odot M^\op$, $\omega(1)=1$ and the maps $x\mapsto \omega(x\otimes y_0^\op)$, $y^\op\mapsto \omega(x_0\otimes y^\op)$ are weak$^\ast$ continuous for every $x_0,y_0\in M$. For $u$ in the algebraic tensor product $M\odot M^\op$ let
\begin{equation*}
    \norm{u}_\bin=\sup\{\omega(u^\ast u)^{1/2}\mid \omega\text{ binormal state on } M\odot M^\op\}.
\end{equation*}
The binormal tensor product $M\otimes_\bin M^\op$ is the completion of $M\odot M^\op$ with respect to the norm $\norm\cdot_\op$.

%For the following result note that if $a\in M\phi^{1/2}$ and $b\in \phi^{1/2}M$, then $\pi_l(a)\in M$ and $\pi_r(b)\in M^\prime$, hence $J\pi_r(b)^\ast J\in M$.

\begin{theorem}\label{thm:existence:deriv_bounded}
Let $(\Phi_t)$ be a uniformly continuous KMS-symmetric quantum Markov semigroup on $M$ and let $\L$ denote its generator.  There exists a correspondence $\H$, an anti-unitary involution $\J\colon \H\to \H$ and a bounded operator $\delta\colon L^2(M)\to \H$ satisfying
\begin{enumerate}[\indent (a)]
    \item $\J(x\xi y)=y^\ast(\J\xi)x^\ast$ for all $x,y\in M$ and $\xi\in \H$,
    \item $\delta(Ja)=\J\delta(a)$ for all $a\in L^2(M)$,
    \item $\delta(ab)=\pi_l(a)\cdot\delta(b)+\delta(a)\cdot J\pi_r(b)^\ast J$ for all $a\in M\phi^{1/2}$, $b\in \phi^{1/2}M$,
    \item $\overline{\mathrm{lin}}\{\delta(a)x\mid a\in L^2(M),\,x\in M\}=\H$
\end{enumerate}
such that
\begin{equation*}
    \L_2=\delta^\ast \delta.
\end{equation*}
Moreover, there exists $\xi_0\in \H$ such that 
\begin{equation*}
    \delta(a)=\pi_l(a)\cdot\xi_0-\xi_0\cdot J\pi_r(a)^\ast J
\end{equation*}
for $a\in M\phi^{1/2}\cap \phi^{1/2}M$.
\end{theorem}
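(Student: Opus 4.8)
The plan is to transcribe the proof of \Cref{thm:existence_derivation_matrix} to the present setting, the genuinely infinite-dimensional points being the use of a dense algebra of analytic elements in place of all of $M_n(\IC)$, the completion of the quotient, the \emph{normality} of the resulting actions, and the unboundedness of $\Delta$. The object driving everything is again the $\V$-transform $\check\L$ of the generator: by the results of \Cref{sec:V-transform} it is the generator of a uniformly continuous KMS-symmetric quantum Markov semigroup, hence, viewed on $M$, a normal, KMS-symmetric, conditionally negative definite map with $\check\L(1)=0$. These are exactly the properties exploited in the matrix argument, and I also record that $\check\L$ is bounded since $\L$ is.

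First I would build $\H$. Fixing a $\sigma$-weakly dense $\ast$-subalgebra $\AA\subseteq M$ of entire analytic elements for the modular group, I define on $\AA\odot\AA$ the form
\begin{equation*}
\langle a_1\otimes b_1,a_2\otimes b_2\rangle=-\tfrac12\tr\big(b_1^\ast\phi^{1/2}\check\L(a_1^\ast a_2)\phi^{1/2}b_2\big),
\end{equation*}
and the subspace $N=\{\sum_j a_j\otimes b_j:\sum_j a_j\sigma_{-i/2}(b_j)=0\}$, where $\sigma_z$ denotes the analytic modular automorphisms. Using $b_j^\ast\phi^{1/2}=\phi^{1/2}\sigma_{-i/2}(b_j)^\ast$ and $\phi^{1/2}b_k=\sigma_{-i/2}(b_k)\phi^{1/2}$, the quantity $\langle u,u\rangle$ for $u\in N$ becomes a sum to which the conditional negativity of $\check\L$ applies (with $a_j$ as the $x_j$ and $\sigma_{-i/2}(b_j)$ as the $y_j$), so the form is positive semidefinite on $N$; I let $\H$ be the completion of $N$ modulo its null vectors. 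Left and right multiplication $x\cdot(\sum a_j\otimes b_j)=\sum xa_j\otimes b_j$ and $(\sum a_j\otimes b_j)\cdot x=\sum a_j\otimes b_jx$ preserve $N$ and, via the Cauchy--Schwarz estimate of the matrix proof, descend to commuting $\ast$-homomorphisms $\pi_l,\pi_r$ on $\H$. The anti-unitary $\J$ is induced by $a\otimes b\mapsto -b^\ast\otimes a^\ast$, which preserves $N$ and is isometric precisely because $\check\L$ is KMS-symmetric, and it satisfies (a) by construction.

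The step that is free in finite dimensions but is the \textbf{main obstacle} here is showing that $\pi_l$ and $\pi_r$ are \emph{normal}, so that $\H$ is genuinely a correspondence. For this I would invoke \Cref{prop:cnd_decomposable}: writing $\check\L(x)=k^\ast x+xk-\Psi(x)$ with $\Psi$ normal completely positive, the inner product on $\H$ is governed by the Stinespring data of $\Psi$, and the normality of $\Psi$ renders the associated representation normal; equivalently, the form extends to a binormal state on $M\odot M^\op$, so that $\H$ carries a representation of the binormal tensor product $M\otimes_\bin M^\op$ and both actions are normal. This is exactly the place where the passage from matrices to general von Neumann algebras requires new input, whereas everything else is a transcription of the matrix argument.

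Finally I would define and analyse $\delta$. On the dense set of symmetric embeddings $\phi^{1/4}x\phi^{1/4}$ ($x\in\AA$), which lie in $\dom(\Delta^{1/4})\cap\dom(\Delta^{-1/4})$, I set $\delta(\phi^{1/4}x\phi^{1/4})=\sigma_{-i/4}(x)\otimes_\H 1-1\otimes_\H\sigma_{i/4}(x)$, the exact analogue of the matrix formula (one checks this lies in $N$). Properties (b) and (c) are read off as before, and (d) follows from the generation identity of \Cref{thm:existence_derivation_matrix} combined with $\J$ to pass between the left- and right-action versions. The identity $\L_2=\delta^\ast\delta$ follows by expanding $\langle\delta(a),\delta(b)\rangle_\H$ into four terms, of which two vanish because $\check\L(1)=0$ and $\check\L$ is KMS-symmetric, while the remaining two combine, through the key property of the $\V$-transform (\Cref{lem:key_V-transform} with $T=\L_2$), into $\langle a,\L_2 b\rangle$. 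Since $\L_2$ is bounded this yields $\norm{\delta(a)}^2\le\norm{\L_2}\,\norm{a}^2$, so $\delta$ extends to a bounded operator on all of $L^2(M)$ with (a)--(d) preserved. For the innerness I would use boundedness of the generator together with the decomposition $\check\L=k^\ast\cdot+\cdot k-\Psi$: as in the bounded case of Cipriani--Sauvageot this forces $\delta$ to be implemented by a single vector $\xi_0\in\H$ built from $k$ and the Stinespring vector of $\Psi$, and on the overlap $a\in M\phi^{1/2}\cap\phi^{1/2}M$ the two modular twists in the definition of $\delta$ are absorbed into $\pi_l(a)$ and $J\pi_r(a)^\ast J$, giving $\delta(a)=\pi_l(a)\xi_0-\xi_0\,J\pi_r(a)^\ast J$.
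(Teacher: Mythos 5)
Your construction of $\H$, $\J$ and $\delta$ follows the paper's proof of \Cref{thm:existence:deriv_bounded} in all essentials: $\V$-transform the generator, use conditional complete negativity and KMS-symmetry of $\check\L$ to obtain a positive semidefinite form on the kernel of $x\otimes y\mapsto x\phi^{1/2}y$ (your $N$ is the algebraic version of the paper's $\ker q$), quotient and complete, induce $\J$ from $a\otimes b\mapsto -b^\ast\otimes a^\ast$, define $\delta$ by the twisted-difference formula, and verify $\L_2=\delta^\ast\delta$ via \Cref{lem:key_V-transform}, extending to all of $L^2(M)$ by boundedness. One calibration: you attribute the normality of the actions to \Cref{prop:cnd_decomposable} and the binormal extension, but decomposability is not needed for that --- the paper gets normality directly from the separate weak$^\ast$ continuity of $\omega(x\otimes y^\op)=-\tfrac12\langle\phi^{1/2},\check\L(x)\phi^{1/2}y\rangle$, which uses only normality of $\check\L$; the remark after the paper's proof states explicitly that Christensen--Evans enters \emph{only} for innerness. (Also, properties (b) and (c) for all $a\in M\phi^{1/2}$, $b\in\phi^{1/2}M$ require the approximation argument of Haagerup's lemma rather than being ``read off as before''; this is minor.)

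The genuine gap is the innerness step. You assert that, ``as in the bounded case of Cipriani--Sauvageot'', the decomposition $\check\L(x)=k^\ast x+xk-\Psi(x)$ forces $\delta$ to be implemented by a vector built from $k$ and the Stinespring data of $\Psi$. In the tracial setting this transfer works because $\delta$ is an honest bimodule derivation, so one can identify the implementing vector from the Stinespring dilation or use a bounded-orbit averaging argument over the unitary group. Here $\delta$ satisfies only the twisted Leibniz rule (c), so it is not a bounded derivation of $M$ into $\H$, and the naive candidate ``$1\otimes_\H 1$'' does not exist since $1\otimes 1\notin N$; the modular twist blocks both standard mechanisms. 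The paper's actual device is different and is precisely where the binormal tensor product earns its keep: use \Cref{prop:cnd_decomposable} to extend $\omega$ to a \emph{bounded} functional on $M\otimes_\bin M^\op$, pass to the closed left ideal $I=\overline{\ker q}$, take a right approximate identity $(e_\lambda)$ of positive contractions, use the bound $\norm{[e_\lambda]_\H}\leq\norm{\omega}^{1/2}$ to extract a weak cluster point $\xi_\omega$, and verify $[u]_\H=\pi_\omega(u)\xi_\omega$ for $u\in I$, whence $\delta(a)=\pi_\omega(\pi_l(a)\otimes 1-1\otimes(J\pi_r(a)^\ast J)^\op)\xi_\omega$ is inner by construction. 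Your sketch names the correct input (Christensen--Evans) but supplies no mechanism converting it into an implementing vector compatible with the twist; to close the gap you would either have to reproduce this approximate-identity/weak-compactness argument or carry out the Stinespring construction explicitly and then re-verify (a)--(d) and $\delta^\ast\delta=\L_2$ for it, which amounts to redoing the whole construction.
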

\begin{proof}
Let
\begin{equation*}
    \omega\colon M\odot M^\op\to \IC,\,x\otimes y^\op\mapsto -\frac 1 2\langle \phi^{1/2},\check \L(x)\phi^{1/2}y\rangle.
\end{equation*}
We first show that $\omega$ extends continuously to $M\otimes_\bin M^\op$.

Since $\check\L$ is the generator of a uniformly continuous quantum Markov semigroup, by \Cref{prop:cnd_decomposable} there exist normal completely positive maps $\Psi_1,\Psi_2$ on $M$ such that $\check\L=\Psi_2-\Psi_1$. For $j\in \{1,2\}$, the maps
\begin{equation*}
    \omega_j\colon M\odot M^\op\to \IC,\,x\otimes y^\op\mapsto \frac 1 2\langle \phi^{1/2},\Psi_j(x)\phi^{1/2}y\rangle
\end{equation*}
are positive and separately weak$^\ast$ continuous. Hence they extend to positive functionals on $M\otimes_\bin M^\op$ by definition of $\norm\cdot_\bin$. Thus $\omega$ extends to a bounded linear map on $M\otimes_\bin M^\op$. We continue to denote the extension of $\omega$ to $M\otimes_\bin M^\op$ by $\omega$.

Let
\begin{equation*}
    q\colon M\odot M^\op\to L^2(M),\,x\otimes y^\op\mapsto x\phi^{1/2}y.    
\end{equation*}
 Clearly, the kernel of $q$ is a left ideal of $M\odot M^\op$. Let $I$ denote its closure, which is a closed left ideal of $M\otimes_\bin M^\op$.

 If $u=\sum_{j=1}^n x_j\otimes y_j^\op\in \ker q$, then
 \begin{align*}
    \omega(u^\ast u)&=-\frac 1 2\sum_{j,k=1}^n \langle \phi^{1/2}y_j,\check \L(x_j^\ast x_k)\phi^{1/2}y_k\rangle\\
    &=\lim_{t\to 0}\frac  1 {2t} \left(\sum_{j,k=1}^n \langle \phi^{1/2}y_j,\check \Phi_t(x_j^\ast x_k)\phi^{1/2}y_k\rangle-\norm{q(u)}^2 \right)\\
    &=\lim_{t\to 0}\frac  1 {2t}\sum_{j,k=1}^n \langle \phi^{1/2}y_j,\check \Phi_t(x_j^\ast x_k)\phi^{1/2}y_k\rangle.
 \end{align*}
 The last expression is positive since $\check \Phi_t$ is completely positive for all $t\geq 0$. By continuity, it follows that $\omega(u^\ast u)\geq 0$ for all $u\in I$.

 Let $\H$ be the GNS Hilbert space associated with $\omega\vert_I$ and $\pi_\omega$ the GNS representation of $M\otimes_\bin M^\op$ on $\H$, that is, $\H$ is the completion of $I$ with respect to the inner product $(x,y)\mapsto \omega(x^\ast y)$ and $\pi_\omega(x)[y]_\H=[xy]_\H$, were $[\cdot]_\H$ denotes the canonical map $I\to \H$. As $\omega$ is separately weak$^\ast$ continuous, it follows easily that the actions $x\mapsto \pi_\omega(x\otimes 1)$ and $y^\op\mapsto \pi_\omega(1\otimes y^\op)$ are normal. These actions make $\H$ into an $M$-$M$-correspondence.

 Moreover, let $(e_\lambda)$ be a right approximate identity for $I$ consisting of positive contractions. Since $\norm{[e_\lambda]_\H}=\omega(e_\lambda^2)^{1/2}\leq \norm{\omega}^{1/2}$, we may assume additionally that $[e_\lambda]_\H$ converges weakly to some vector $\xi_\omega\in \H$. If $u\in I$, then
\begin{equation*}
     [u]_\H=[\lim_\lambda u e_\lambda]=\lim_\lambda \pi_\omega(u)[e_\lambda]_\H=\pi_\omega(u)\xi_\omega.
\end{equation*}
In particular, $\xi_\omega$ is a cyclic vector for $\pi_\omega$.

To define $\J$, first define an anti-linear map $\J_0$ on $M\odot M^\op$ by $\J_0(x\otimes y^\op)=y^\ast \otimes (x^\ast)^\op$. A direct computation shows $q\J_0=Jq$. In particular, $\J_0$ leaves $\ker q$ invariant.

Furthermore, if $x_1,x_2,y_1,y_2\in M$, then
\begin{align*}
    \omega(\J_0(x_1\otimes y_1^\op)^\ast\J_0(x_2\otimes y_2^\op))&=-\frac 1 2\langle \phi^{1/2},\check\L(y_1 y_2^\ast)\phi^{1/2}x_2^\ast x_1\rangle\\
    &=-\frac 1 2\langle \phi^{1/2}x_1^\ast x_2,\check \L(y_1 y_2^\ast)\phi^{1/2}\rangle\\
    &=-\frac 1 2\langle \phi^{1/2}\check\L(x_1^\ast x_2),y_1 y_2^\ast\phi^{1/2}\rangle\\
    &=-\frac 1 2\langle \phi^{1/2},y_1 y_2^\ast \phi^{1/2}\check\L(x_2^\ast x_1)\rangle\\
    &=-\frac 1 2\langle J(y_1 y_2^\ast \phi^{1/2}\check\L(x_2^\ast x_1)),J\phi^{1/2}\rangle\\
    &=-\frac 1 2\langle \check \L(x_1^\ast x_2)\phi^{1/2}y_2 y_1^\ast,\phi^{1/2}\rangle\\
    &=-\frac 1 2\langle \phi^{1/2},\check\L(x_2^\ast x_1)\phi^{1/2}y_1 y_2^\ast\rangle\\
    &=\omega((x_2\otimes y_2^\op)^\ast (x_1\otimes y_1^\op)).
\end{align*}

Therefore the map
\begin{equation*}
    [I]_\H\to [I]_\H,\,[x\otimes y^\op]_\H\mapsto [y^\ast\otimes (x^\ast)^\op]_\H    
\end{equation*}
extends to an isometric anti-linear operator $\J$ on $\H$. Obviously, $\J$ is an involution and property (a) follows directly from the definition.

Finally let us construct $\delta$. For $a\in M\phi^{1/2}\cap \phi^{1/2}M$ let 
\begin{equation*}
     \delta(a)=\pi_\omega(\pi_l(a)\otimes 1-1\otimes (J\pi_r(a)^\ast J)^\op)\xi_\omega.
\end{equation*}
Note that
\begin{equation*}
     q(\pi_l(a)\otimes 1-1\otimes (J\pi_r(a)^\ast J)^\op)=\pi_l(a)\phi^{1/2}-\pi_r(a)\phi^{1/2}=0
\end{equation*}
so that $\pi_l(a)\otimes 1-1\otimes (J\pi_r(a)^\ast J)^\op\in I$ and $\delta(a)=[\pi_l(a)\otimes 1-1\otimes (J\pi_r(a)^\ast J)^\op]_\H$.
 
We have
\begin{align*}
    \norm{\delta(a)}_\H^2&=\norm{[\pi_l(a)\otimes 1-1\otimes (J\pi_r(a)^\ast J)^\op]_\H}^2\\
    &=\omega(\abs{\pi_l(a)\otimes 1-1\otimes (J\pi_r(a)^\ast J)^\op}^2)\\
    &=\omega(\pi_l(a^\sharp a)\otimes 1)+\omega(1\otimes (J\pi_r(aa^\flat)J)^\op)\\
    &\quad-\omega(\pi_l(a^\sharp)\otimes (J\pi_r(a)^\ast J)^\op)-\omega(\pi_l(a)\otimes (J\pi_r(a^\flat)^\ast J)^\op)\\
    &=-\frac 1 2 \langle \phi^{1/2},\check \L_2(\Delta^{1/4}(a^\sharp a))\rangle-\frac 1 2\langle\Delta^{1/4}J (aa^\flat),\check \L_2\phi^{1/2}\rangle\\
    &\quad  +\frac 1 2\langle \Delta^{1/4}J a,\check \L_2\Delta^{1/4}(a^\sharp)\rangle+\frac 1 2\langle \Delta^{1/4}J a^\flat,\check \L_2\Delta^{1/4}a\rangle\\
    &\overset{(1)}{=}\frac 1 2\langle J\Delta^{-1/4}a,\check \L_2 J\Delta^{1/4}a\rangle+\frac 1 2\langle \Delta^{-1/4}a,\check \L_2\Delta^{1/4}a\rangle\\
    &\overset{(2)}{=}\frac 1 2\langle \Delta^{1/4}a,\check \L_2\Delta^{-1/4}a\rangle+\frac 1 2\langle \Delta^{-1/4}a,\check \L_2\Delta^{1/4}a\rangle\\
    &\overset{(3)}{=}\langle a,\L_2(a)\rangle.
\end{align*}
Here we used the symmetry of $\check\L_2$ and $\check\L_2\phi^{1/2}=0$ for (1), the symmetry of $\check\L_2$ and $\check \L_2 J=J\check \L_2$ for (2) and the key property from \Cref{lem:key_V-transform} for (3).

Therefore the map $\delta$ extends to a bounded linear operator from $L^2(M)$ to $\H$, and this extension, still denoted by $\delta$, satisfies $\delta^\ast \delta=\L_2$. Clearly,
\begin{equation*}
    \delta(ab)=\pi_l(a)\cdot\delta(b)+\delta(a)\cdot J\pi_r(b)^\ast J     
\end{equation*}
for $a,b\in M\phi^{1/2}\cap \phi^{1/2}M$. If we only have $a\in M\phi^{1/2}$ and $b\in \phi^{1/2}M$, a standard approximation argument \cite[Lemma 1.3]{Haa75} shows that this identity continues to hold, which settles property (c).

Property (b) is clear from the definition if $a\in M\phi^{1/2}\cap \phi^{1/2}M$, and can be extended to $a\in L^2(M)$ again by approximation.

Finally, if $u=\sum_{j=1}^n x_j\otimes y_j^\op\in \ker q$ and $x_j$ is analytic for $\sigma^\phi$ for all $j\in \{1,\dots,n\}$, then 
\begin{align*}
    u&=\sum_{j=1}^n x_j\otimes y_j^\op\\
    &=\sum_{j=1}^n (x_j\otimes y_j^\op-1\otimes (\sigma^\phi_{i/2}(x_j)y_j)^\op),
\end{align*}
where we used $0=q(u)=\phi^{1/2}\sum_{j=1}^n \sigma^\phi_{i/2}(x_j)y_j$. Thus
\begin{equation*}
    [u]_\H=\sum_{j=1}^n \delta(x_j\phi^{1/2})y_j.
\end{equation*}
Since $\pi_\omega$ is normal and $[\ker q]_\H$ is dense in $\H$ by definition, property (d) follows.
\end{proof}

\begin{remark}
If one only wants to show the existence of the derivation $\delta$, one could work directly with the GNS representation of $\omega$ on $\ker q$ without passing to the binormal tensor product. Hence \Cref{prop:cnd_decomposable} and thus the Christensen--Evans theorem is only needed to show that the derivation is inner.
\end{remark}

\subsection{Uniqueness}

We show next that the triple $(\H,\J,\delta)$ constructed in \Cref{thm:existence:deriv_bounded} is uniquely determined by the semigroup $(\Phi_t)$ up to isomorphism. Let us first introduce some terminology for triples of this kind.

\begin{definition}
We call a pair $(\H,\J)$ consisting of an $M$-$M$-correspondence and an anti-unitary involution $\J\colon \H\to \H$ a \emph{self-dual $M$-$M$-correspondence} if
\begin{equation*}
    \J(x\xi y)=y^\ast (\J\xi) x^\ast
\end{equation*}
for all $x,y\in M$ and $\xi\in \H$.

We call a triple $(\H,\J,\delta)$ consisting of a self-dual $M$-$M$-correspondence $(\H,\J)$ and a closed  operator $\delta\colon \dom(\delta)\subset L^2(M)\to \H$ a \emph{first-order differential calculus} if 
\begin{enumerate}[\indent (a)]
    \item $J\dom(\delta)=\dom(\delta)$ and $\delta(Ja)=\J\delta(a)$ for all $a\in L^2(M)$,
    \item Whenever $a\in \dom(\delta)\cap M\phi^{1/2}$, $b\in \dom(\delta)\cap \phi^{1/2}M$, then $ab\in \dom(\delta)$ and $\delta(ab)=\pi_l(a)\delta(b)+\delta(a)J\pi_r(b)^\ast J$,
    \item $\overline{\mathrm{lin}}\{\delta(a)x\mid a\in L^2(M),\,x\in M\}=\H$.
\end{enumerate}
\end{definition}

With this definition, \Cref{thm:existence:deriv_bounded} says that for every bounded Markov generator $\L_2$ on $L^2(M)$ there exists a first-order differential calculus $(\H,\J,\delta)$ such that $\L_2=\delta^\ast \delta$. In this subsection we will show that $(\H,\J,\delta)$ is uniquely determined by $\L_2$.

To lighten the notation, we write $a\xi$ for $\pi_l(a)\xi$ if $a\in M\phi^{1/2}$ and $\xi b$ for $\xi\cdot J\pi_r(b)^\ast J$ if $b\in M\phi^{1/2}$ for the remainder of this subsection.

The first step towards uniqueness is a purely algebraic consequence of the properties (a) and (b) of a first-order differential calculus.

\begin{lemma} \label{lem:uniqueness_algebraic_calculation}
If $(\H,\J,\delta)$ is a first-order differential calculus  and $\delta$ is bounded, then
\begin{align*}
    \ip{\delta(\Delta^{1/4}&a)\Delta^{1/4}b,\delta(\Delta^{-1/4}c)}_\H+\ip{\delta(\Delta^{-1/4}a)\Delta^{-1/4}b,\delta(\Delta^{1/4}c)}_\H\\
    =&\ip{\delta(\Delta^{-1/4}(ab)),\delta(\Delta^{1/4}c)}_\H+\ip{\delta(\Delta^{1/4}a),\delta(\Delta^{-1/4}cJ(\Delta^{-1/4}b))}_\H\\
    &-\ip{\delta(J(\Delta^{1/4}c)\Delta^{1/4}a),\delta(J(\Delta^{-1/4}b))}_\H
\end{align*}
for all $a,b,c\in M_\phi^a\phi^{1/2}$, where $M^a_\phi$ denotes the set of all entire analytic elements for $\sigma^\phi$.
\end{lemma}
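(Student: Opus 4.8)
The plan is to prove the identity by a direct algebraic manipulation using only the Leibniz rule (property (b)), the compatibility $\delta\circ J=\J\circ\delta$ (property (a)), the anti-unitarity and self-duality of $\J$, and the adjoint relations for the left and right module actions. Since $a,b,c\in M^a_\phi\phi^{1/2}$, every vector $\Delta^z a$, $\Delta^z b$, $\Delta^z c$ and each of their images under $J$ lies in $M\phi^{1/2}\cap\phi^{1/2}M$; hence all the products occurring inside $\delta$ on the right-hand side are well-defined elements of $L^2(M)$ to which (b) applies, and both module actions are available on them.

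First I would expand the three ``$\delta$ of a product'' terms on the right-hand side via (b). Writing out the multiplication in $L^2(M)$ one checks that $\Delta^{-1/4}(ab)=(\Delta^{-1/4}a)(\Delta^{-1/4}b)$, so that
\[
\delta(\Delta^{-1/4}(ab))=(\Delta^{-1/4}a)\,\delta(\Delta^{-1/4}b)+\delta(\Delta^{-1/4}a)\,(\Delta^{-1/4}b),
\]
and similarly for $\delta(\Delta^{-1/4}c\,J(\Delta^{-1/4}b))$ and $\delta(J(\Delta^{1/4}c)\,\Delta^{1/4}a)$. Paired against $\delta(\Delta^{1/4}c)$, the second summand above is exactly the second term on the left-hand side, so these cancel at once. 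For the remaining terms I would invoke property (a) in the form $\delta(J(\Delta^{-1/4}b))=\J\delta(\Delta^{-1/4}b)$ and $\delta(J(\Delta^{1/4}c))=\J\delta(\Delta^{1/4}c)$, converting $\delta$ of a $J$-image into $\J$ applied to $\delta$.

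The core of the argument is then to match the surviving terms in pairs by transporting the module actions across the inner product. This rests on three ``modular flip'' identities. First, $(J\Delta^{1/4}c)^\sharp=\Delta^{-1/4}c$, so that $\pi_l(J(\Delta^{1/4}c))^\ast=\pi_l(\Delta^{-1/4}c)$; this makes the two terms carrying the factor $\J\delta(\Delta^{-1/4}b)$ cancel. Second, the adjoint of the right action of $\Delta^{1/4}b$ is the right action of $J\Delta^{-1/4}b$, because $(\Delta^{1/4}b)^\flat=J\Delta^{-1/4}b$; this identifies the first left-hand term $\ip{\delta(\Delta^{1/4}a)\,\Delta^{1/4}b,\delta(\Delta^{-1/4}c)}$ with the right-hand term $\ip{\delta(\Delta^{1/4}a),\delta(\Delta^{-1/4}c\,J(\Delta^{-1/4}b))}$. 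Third, the element of $M$ implementing the right action of $\Delta^{1/4}a$ is $\Delta^{-1/4}a$, i.e. $J\pi_r(\Delta^{1/4}a)^\ast J=\pi_l(\Delta^{-1/4}a)$, since $(J\Delta^{1/4}a)^\sharp=\Delta^{-1/4}a$; conjugating this right action by $\J$ (via $\J(x\xi y)=y^\ast\J\xi x^\ast$) therefore produces the left action $\pi_l(\Delta^{-1/4}a)^\ast$, which, combined with anti-unitarity of $\J$, pairs the term $\ip{(\Delta^{-1/4}a)\,\delta(\Delta^{-1/4}b),\delta(\Delta^{1/4}c)}$ against $-\ip{\delta(J(\Delta^{1/4}c)\,\Delta^{1/4}a),\delta(J(\Delta^{-1/4}b))}$ with cancelling signs. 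Once all three identities are in place, each right-hand term is matched with a left-hand term, which is the claim.

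The step I expect to be the main obstacle is exactly this final bookkeeping. One must keep the modular conjugation $J$ on $L^2(M)$ rigorously distinct from the bimodule involution $\J$ on $\H$, and track how the adjoints of the left and right actions (governed by $\sharp=J\Delta^{1/2}$ and $\flat=J\Delta^{-1/2}$) interplay with the fractional powers $\Delta^{\pm1/4}$ and with $\J$. Lining up the half-integer powers together with the $\ast$-operations — so that, for instance, the right action of $\Delta^{1/4}a$ conjugated by $\J$ really reproduces the left action associated with $\Delta^{-1/4}a$ rather than $\Delta^{1/4}a$ — is where all the care is required; with those identities secured, the cancellations are automatic.
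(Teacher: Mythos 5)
Your proposal is correct and takes essentially the same route as the paper: the paper establishes the identity by two add-and-subtract manipulations of $\ip{\delta(x)y,\delta(z)}_\H$ using the Leibniz rule, property (a) of the calculus, and the modular adjoint relations, then specializes to $x=\Delta^{\mp1/4}a$, $y=\Delta^{\mp1/4}b$, $z=\Delta^{\pm1/4}c$ and adds — which is exactly your computation run in the opposite direction, expanding the right-hand side and cancelling. Your three ``modular flip'' identities (in particular $(J\Delta^{1/4}c)^\sharp=\Delta^{-1/4}c$ and $J\pi_r(\Delta^{1/4}a)^\ast J=\pi_l(\Delta^{-1/4}a)$) are precisely the adjoint relations the paper invokes, and all your claimed cancellations check out.
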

\begin{proof}
As $a,b,c\in M_\phi^a\phi^{1/2}$, these elements lie in $\bigcap_{n\in\mathbb Z}\dom(\Delta^n)$, and arbitrary powers of the modular operator map them to left- and right-bounded vectors. In particular, all expressions in the claimed equation are well-defined.

Using properties (a) and (b) of a first-order differential calculus, we can do the following computations. Let $x,y,z \in M_\phi^a\phi^{1/2}$. Then we have
\begin{align*}
    \ip{\delta(x)y,\delta(z)}_\H&=\ip{\delta(x)y,\delta(z)}_\H+\ip{x\delta(y),\delta(z)}_\H-\ip{\J(\delta(z)),\J(x\delta(y))}_\H\\
    &=\langle \delta(xy),\delta(z)\rangle_\H-\langle\delta(Jz),\delta(Jy)Jx\rangle_\H\\
    &=\ip{\delta(xy),\delta(z)}_\H-\ip{\delta(Jz)\Delta^{1/2}x,\delta(Jy)}_\H
\end{align*}
and
\begin{align*}
    \ip{\delta(x)y,\delta(z)}_\H&=\ip{\delta(x),\delta(z)J\Delta^{-1/2}y}_\H+\ip{\delta(x),z\delta(J\Delta^{-1/2}y)}_\H-\ip{\delta(x),z\delta(J\Delta^{-1/2}y)}_\H\\
    &=\ip{\delta(x),\delta(zJ\Delta^{-1/2}y)}_\H-\ip{\delta(x),z\delta(J\Delta^{-1/2}y)}_\H\\
    &=\ip{\delta(x),\delta(zJ\Delta^{-1/2}y)}_\H-\ip{J(\Delta^{1/2}z)\delta(x),\delta(J\Delta^{-1/2}y)}_\H.
\end{align*}
If we take $x=\Delta^{-1/4}a$, $y=\Delta^{-1/4}b$, $z=\Delta^{1/4}c$ in the first identity and $x=\Delta^{1/4}a$, $y=\Delta^{1/4}b$, $z=\Delta^{-1/4}c$ in the second identity and add them up, we obtain
\begin{align*}
    &\ip{\delta(\Delta^{-1/4}a)\Delta^{-1/4}b,\delta(\Delta^{1/4}c)}_\H+\ip{\delta(\Delta^{1/4}a)\Delta^{1/4}b,\delta(\Delta^{-1/4}c)}_\H\\
    &=\langle \delta(\Delta^{-1/4}(ab)),\delta(\Delta^{1/4}c)\rangle_\H-\langle\delta(J\Delta^{1/4}c)\Delta^{1/4}a,\delta(J\Delta^{-1/4}b)\rangle_\H\\
    &\quad +\langle\delta(\Delta^{1/4}a),\delta((\Delta^{-1/4}c)J\Delta^{-1/4}b)\rangle_\H-\langle (J\Delta^{1/4}c)\delta(\Delta^{1/4}a),\delta(J\Delta^{-1/4}b)\rangle_\H\\
    &=\langle \delta(\Delta^{-1/4}(ab)),\delta(\Delta^{1/4}c)\rangle_\H+\langle\delta(\Delta^{1/4}a),\delta((\Delta^{-1/4}c)J\Delta^{-1/4}b)\rangle_\H\\
    &\quad-\langle \delta((J\Delta^{1/4}c)\Delta^{1/4}a),\delta(J\Delta^{-1/4}b)\rangle_\H,
\end{align*}
where we used again property (b) of a first-order differential calculus in the last step.
\end{proof}

The significance of this result is that the right side depends only on the inner product of elements from the range of $\delta$ and not the bimodule generated by the range. If the modular operator $\Delta$ is trivial, that is, $\phi$ is a trace, one can conclude uniqueness of the derivation directly from this lemma. In the general case, substantially more work is needed. In particular, there are analytical difficulties that are absent in the case of tracially symmetric (or more generally GNS-symmetric) quantum Markov semigroups.

One tool we use are spectral subspaces of the analytic generator of the modular group. We recall the definition and some of their properties here. See \cite{CZ76} for more details.

For $0<\lambda_1<\lambda_2$ let
\begin{equation*}
    M[\lambda_1,\lambda_2]=\left\{x\in \bigcap_{t\in\IR}\dom(\sigma^\phi_{it}): \varlimsup_{t\to\infty}\norm{\sigma^\phi_{it}(x)}^{1/t}\leq \frac 1{\lambda_1},\,\varlimsup_{t\to\infty}\norm{\sigma^\phi_{-it}(x)}^{1/t}\leq \lambda_2\right\}.
\end{equation*}
This is a norm closed subspace of $M$, invariant under $\sigma^\phi$, and the spectrum of the restriction of $\sigma^\phi_{-i}$ to $M[\lambda_1,\lambda_2]$ is contained in $[\lambda_1,\lambda_2]$ (see \cite[(iii)--(v), p. 351]{CZ76}). Moreover, the union $\bigcup_{0<\lambda_1\leq \lambda_2<\infty}M[\lambda_1,\lambda_2]$ is weak$^\ast$ dense in $M$ \cite[(vi), p. 356]{CZ76}. Additionally, $\sigma^\phi_t$ is given by $e^{itH}$ for some $H\in B(M[\lambda_1,\lambda_2])$ with $\spec(H)\subset[- \ln(\lambda_1),\ln(\lambda_2)]$ \cite[Theorem 5.2, p. 349]{CZ76}.

\begin{lemma} \label{lem:tens_prod_mod_op}
    Let $0<\lambda_1<\lambda_2$. Define $X$ to be the completion of $(\1_{[\lambda_1,\lambda_2]}(\Delta)L^2(M))\odot M[\lambda_1,\lambda_2]$ with the projective cross norm. Then the bounded operator $T: X\to X$ defined on pure tensors by
    \begin{equation*}
        T(\eta \otimes x) = \Delta^{1/4}(\eta)\otimes \sigma^\phi_{-i/4}(x)
    \end{equation*}
    is well defined and $\spec(T)\subset(0,\infty)$.
\end{lemma}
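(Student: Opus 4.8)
The plan is to recognise $T$ as the projective tensor product of two bounded operators, each having spectrum inside $(0,\infty)$, and then to control the spectrum of the product by the elementary spectral calculus for commuting operators.

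First I would fix the notation $X_1=\1_{[\lambda_1,\lambda_2]}(\Delta)L^2(M)$ and $X_2=M[\lambda_1,\lambda_2]$, so that $X=X_1\widehat{\otimes}X_2$, and set $A=\Delta^{1/4}|_{X_1}$ and $B=\sigma^\phi_{-i/4}|_{X_2}$, giving $T=A\otimes B$. Since $\Delta^{1/4}$ commutes with the spectral projections of $\Delta$ it preserves $X_1$ and is bounded there by $\lambda_2^{1/4}$, and since $M[\lambda_1,\lambda_2]$ is $\sigma^\phi$-invariant with $\sigma^\phi_z|_{X_2}=e^{izH}$ for a bounded operator $H$, the map $B=e^{H/4}$ is bounded on $X_2$; here I use that $\lambda_2<\infty$ and that $H$ is bounded. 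Because the projective norm is a cross norm, the algebraic operator $A\otimes B$ then extends to a bounded operator on $X$ with $\norm{T}\le\norm{A}\,\norm{B}$, which settles well-definedness.

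Next I would compute the spectra of the two factors. As $A$ is a positive self-adjoint operator on the Hilbert space $X_1$, the spectral theorem gives $\spec(A)\subset[\lambda_1^{1/4},\lambda_2^{1/4}]\subset(0,\infty)$. For $B$ I would use the facts recalled from \cite{CZ76}: analytically continuing $\sigma^\phi_t|_{X_2}=e^{itH}$ to $z=-i/4$ yields $B=e^{H/4}$, and since $\spec(H)\subset[-\ln\lambda_1,\ln\lambda_2]\subset\IR$, the spectral mapping theorem for the entire function $z\mapsto e^{z/4}$ gives $\spec(B)=\{e^{s/4}:s\in\spec(H)\}\subset(0,\infty)$.

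Finally I would combine the factors by writing $T=(A\otimes I)(I\otimes B)$, where the two factors commute. For $\mu\notin\spec(A)$ the cross-norm property makes $(A-\mu)^{-1}\otimes I$ a bounded two-sided inverse of $(A-\mu)\otimes I=A\otimes I-\mu$, so $\spec(A\otimes I)\subset\spec(A)$, and similarly $\spec(I\otimes B)\subset\spec(B)$; then the inclusion $\spec(S_1S_2)\subset\spec(S_1)\cdot\spec(S_2)$ for commuting elements of a unital Banach algebra gives $\spec(T)\subset(0,\infty)\cdot(0,\infty)=(0,\infty)$. The main obstacle is exactly this last step: because $X$ is only a Banach space (a projective tensor product of a Hilbert space with a Banach space), the Hilbert-space functional calculus is not available for $T$, and the product-of-spectra inclusion must be applied with spectra computed in $B(X)$. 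I would justify it by passing to a maximal commutative subalgebra $\mathcal A\subset B(X)$ containing $A\otimes I$ and $I\otimes B$, which is inverse-closed so that spectra in $\mathcal A$ agree with those in $B(X)$, and then reading off the product inclusion from Gelfand theory on $\mathcal A$.
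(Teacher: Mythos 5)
Your proof is correct, and its skeleton coincides with the paper's: both identify $T$ with the tensor product of $\Delta^{1/4}$ restricted to the spectral subspace $\1_{[\lambda_1,\lambda_2]}(\Delta)L^2(M)$ and $\sigma^\phi_{-i/4}$ restricted to $M[\lambda_1,\lambda_2]$, observe that the first factor has spectrum in $[\lambda_1^{1/4},\lambda_2^{1/4}]$ by the spectral theorem while the second has spectrum in $(0,\infty)$ because $\sigma^\phi_{-i/4}\vert_{M[\lambda_1,\lambda_2]}=e^{H/4}$ with $\spec(H)\subset\IR$ and the holomorphic spectral mapping theorem applies, and then pass from the factor spectra to $\spec(T)$. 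The only genuine divergence is the last step: the paper disposes of it by citing Schechter \cite[p.~96]{Sch69}, where it is shown that for tensor products with respect to a reasonable cross norm (the projective norm in particular) one has $\spec(A\otimes B)=\spec(A)\cdot\spec(B)$, whereas you reprove the needed inclusion from scratch: $\spec(A\otimes I)\subset\spec(A)$ and $\spec(I\otimes B)\subset\spec(B)$ by tensoring resolvents (legitimate since the projective norm is a cross norm functorial under bounded maps), followed by $\spec(S_1S_2)\subset\spec(S_1)\cdot\spec(S_2)$ for commuting operators, justified via an inverse-closed maximal commutative subalgebra and Gelfand theory. Your route buys self-containedness and correctly confronts the point you flag yourself — $X$ is merely a Banach space, so no Hilbert-space functional calculus is available and the spectra must be computed in $B(X)$; in fact your argument is essentially the standard mechanism behind Schechter's theorem. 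The price is that you obtain only the inclusion rather than the equality of spectra, but since the lemma (and its later use in \Cref{thm:uniqueness}, for the invertibility of $T$ and the decay of $e^{-sT^2}$) needs only $\spec(T)\subset(0,\infty)$, nothing is lost.
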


\begin{proof}
    The spectrum of $\Delta^{1/4}$ restricted to $\1_{[\lambda_1,\lambda_2]}(\Delta)L^2(M)$ is contained in $[\lambda_1^{1/4},\lambda_2^{1/4}]$ by definition. Since the restriction of $\sigma^\phi_{-i/4}$ to $M[\lambda_1,\lambda_2]$ is $e^{1/4 H}$ for some $H\in B(M[\lambda_1,\lambda_2])$ with $\spec(H)\subset[-\ln(\lambda_1),\ln(\lambda_2)]$, we know that the spectrum of the restriction of $\sigma^\phi_{-i/4}$ to $M[\lambda_1,\lambda_2]$ is also contained in $[\lambda_1^{1/4},\lambda_2^{1/4}]$. Then $\Delta^{1/4}\otimes\id$ and $\id\otimes \sigma^\phi_{-i/4}$ are well defined and have spectra contained in $[\lambda_1^{1/4},\lambda_2^{1/4}]$, so $\Delta^{1/4}\otimes \sigma^\phi_{-i/4}$ has spectrum contained in $(0,\infty)$ \cite[p. 96]{Sch69}. 
\end{proof}

\begin{theorem} \label{thm:uniqueness}
    Let $(\H_1,\J_1,\delta_1)$ and $(\H_2,\J_2,\delta_2)$ be first order differential calculi for $M$ such that $\delta_1$ and $\delta_2$ are bounded and $\delta_1^\ast\delta_1=\delta_2^\ast\delta_2$. Then there exists a unitary bimodule map $\Theta: \H_1\to \H_2$ intertwining $\J_1$ and $\J_2$ such that $\Theta(\delta_1(a))=\delta_2(a)$ for all $a\in L^2(M)$. 
\end{theorem}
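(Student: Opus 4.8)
The plan is to construct $\Theta$ directly on the dense subspace $\mathrm{lin}\{\delta_1(a)x\mid a\in L^2(M),\,x\in M\}$ (dense by property (c) of a first-order differential calculus) via $\Theta(\delta_1(a)x)=\delta_2(a)x$, and to show this is a well-defined surjective isometry. Since $\pi_r^\H$ is a $\ast$-representation of $M^\op$, every inner product $\ip{\delta_i(a)x,\delta_i(b)y}_{\H_i}$ reduces to one of the form $\ip{\delta_i(a)\,w,\delta_i(b)}_{\H_i}$ with $w\in M$, so the whole construction hinges on the identity
\begin{equation*}
\ip{\delta_1(a)\,w,\delta_1(b)}_{\H_1}=\ip{\delta_2(a)\,w,\delta_2(b)}_{\H_2}
\end{equation*}
for $a,b$ in a suitable dense set and $w\in M$. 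The key observation is that the right-hand side of \Cref{lem:uniqueness_algebraic_calculation} is a linear combination of inner products of the shape $\ip{\delta(u),\delta(v)}_\H=\ip{u,\delta^\ast\delta(v)}=\ip{u,\L_2 v}$, which depend only on $\L_2=\delta_i^\ast\delta_i$ and are hence identical for $i=1,2$. Consequently the left-hand side of the lemma, a specific $\Delta^{\pm1/4}$-twisted combination of bimodule inner products, agrees for the two calculi.

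The next step is to encode this agreement as an operator identity and to disentangle the two twisted terms. Writing $b=b_0\phi^{1/2}$ and using $\Delta^{1/4}(b_0\phi^{1/2})=\sigma^\phi_{-i/4}(b_0)\phi^{1/2}$, the operator $T$ of \Cref{lem:tens_prod_mod_op} on $X$ implements precisely the simultaneous $\Delta^{1/4}$-twist on the $\delta$-argument and on the right multiplier. I would fix $0<\lambda_1<\lambda_2$, set $P=\1_{[\lambda_1,\lambda_2]}(\Delta)$, and define the bounded sesquilinear forms $\beta_i(\zeta,c)=\ip{\delta_i(\zeta_{(1)})\,\zeta_{(2)}\phi^{1/2},\delta_i(c)}_{\H_i}$ on $X\times PL^2(M)$, which are bounded for the projective cross norm on $X$ because $\delta_i$ and the right action are bounded; represent them as $\beta_i(\zeta,c)=\ip{R_i\zeta,c}$ with $R_i\colon X\to PL^2(M)$ bounded. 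Restricting \Cref{lem:uniqueness_algebraic_calculation} to $a,b,c\in M[\lambda_1,\lambda_2]\phi^{1/2}\subset M_\phi^a\phi^{1/2}\cap PL^2(M)$ and subtracting the two calculi turns the agreement of the left-hand sides into the relation $\Delta^{-1/4}R\,T+\Delta^{1/4}R\,T^{-1}=0$ for $R:=R_1-R_2$, as operators into $PL^2(M)$. With $S:=\Delta^{1/4}P$ (bounded and $\spec S\subset(0,\infty)$) this rearranges, after multiplying by $S$ on the left and by $T$ on the right, to the Sylvester equation
\begin{equation*}
S^2R+R\,T^2=0.
\end{equation*}

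The crux is then to conclude $R=0$, and this is exactly where \Cref{lem:tens_prod_mod_op} is needed: since $\spec(T)\subset(0,\infty)$ we have $\spec(T^2)\subset(0,\infty)$, while $\spec(-S^2)\subset(-\infty,0)$, so the two spectra are disjoint and the Sylvester operator $Y\mapsto S^2Y+YT^2$ is invertible; hence $R=0$. I expect this disentangling step---converting the single coupled identity of \Cref{lem:uniqueness_algebraic_calculation} into the vanishing of each bimodule inner product, via the positivity of the spectrum of $T$ on the spectral subspaces and the disjoint-spectra criterion for Sylvester equations---to be the main obstacle; the role of the subspaces $M[\lambda_1,\lambda_2]$ and $\1_{[\lambda_1,\lambda_2]}(\Delta)L^2(M)$ is precisely to render $\Delta^{1/4}$ and $\sigma^\phi_{-i/4}$ bounded with spectrum bounded away from $0$ and $\infty$, so that this argument applies. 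From $R=0$ one reads off $\beta_1=\beta_2$, i.e.\ the displayed identity for $a,b,w\in M[\lambda_1,\lambda_2]$; letting $\lambda_1\downarrow0$ and $\lambda_2\uparrow\infty$ and using that $\bigcup M[\lambda_1,\lambda_2]$ is weak$^\ast$ dense and $\bigcup\1_{[\lambda_1,\lambda_2]}(\Delta)L^2(M)$ is dense, together with the boundedness of $\delta_1,\delta_2$, extends it to all $a,b\in L^2(M)$, $w\in M$.

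This identity makes $\Theta$ a well-defined isometry on a dense subspace, surjective onto $\H_2$ since $\mathrm{lin}\{\delta_2(a)x\}$ is dense by property (c); thus $\Theta$ extends to a unitary. It intertwines the left action because the Leibniz rule (property (b)) gives $\pi_l(a)\delta_i(b)=\delta_i(ab)-\delta_i(a)\,b$, whence $\Theta(\pi_l(a)\delta_1(b))=\pi_l(a)\delta_2(b)=\pi_l(a)\Theta(\delta_1(b))$, and symmetrically for the right action; normality and density upgrade this to $\Theta\pi_l^{\H_1}(x)=\pi_l^{\H_2}(x)\Theta$ and $\Theta\pi_r^{\H_1}(y^\op)=\pi_r^{\H_2}(y^\op)\Theta$ for all $x,y\in M$. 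Finally $\Theta\J_1=\J_2\Theta$ follows from the self-duality relation $\J(x\xi y)=y^\ast(\J\xi)x^\ast$ together with $\delta_i(Ja)=\J_i\delta_i(a)$ (property (a)), since on the dense subspace $\J_i(\delta_i(a)x)=x^\ast\delta_i(Ja)$, so $\Theta\J_1(\delta_1(a)x)=x^\ast\delta_2(Ja)=\J_2\Theta(\delta_1(a)x)$. This produces the desired unitary bimodule map intertwining the involutions with $\Theta\delta_1=\delta_2$.
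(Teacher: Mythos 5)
Your proposal is correct, and it shares most of its skeleton with the paper's proof: the same candidate map $\Theta(\delta_1(a)x)=\delta_2(a)x$, the same reduction via \Cref{lem:uniqueness_algebraic_calculation} (whose right-hand side involves only inner products $\ip{\delta_j(u),\delta_j(v)}=\ip{u,\L_2 v}$ and hence agrees for the two calculi), the same spectral localization through $X$ and $T$ from \Cref{lem:tens_prod_mod_op}, and the same density/normality bootstrap and endgame for surjectivity, the bimodule property and the intertwining of $\J_1,\J_2$. The genuine divergence is the disentangling step. The paper keeps both slots as elements of $X$, substitutes $Te^{-sT^2}$ (holomorphic functional calculus) into the twisted identity, recognizes the result as $-\frac{d}{ds}\ip{q_j(e^{-sT^2}x),q_j(e^{-sT^2}y)}_{\H_j}$, and integrates over $(0,\infty)$ using $\norm{e^{-sT^2}}\to 0$ (valid because $\spec(T)\subset(0,\infty)$) to recover $\ip{q_1(x),q_1(y)}_{\H_1}=\ip{q_2(x),q_2(y)}_{\H_2}$. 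You instead fold the right multiplier of the second slot into the first --- legitimate, since \Cref{lem:uniqueness_algebraic_calculation} applies with $b=\phi^{1/2}w$ for an \emph{arbitrary} $w\in M[\lambda_1,\lambda_2]$, so nothing forces $w$ to be of the form $ab^\ast$ --- then Riesz-represent the Hilbert-space slot to obtain bounded operators $R_i\colon X\to \1_{[\lambda_1,\lambda_2]}(\Delta)L^2(M)$, and convert the agreement into the Sylvester equation $S^2R+RT^2=0$, concluding $R=0$ from Rosenblum's theorem because $\spec(S^2)\subset(0,\infty)$ and $\spec(-T^2)\subset(-\infty,0)$ are disjoint (the theorem does hold for operators between different Banach spaces, which is what you need since $X$ carries the projective norm). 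The two mechanisms are essentially equivalent --- the standard proof of Rosenblum's theorem is precisely the paper's integrated-semigroup device --- but your packaging makes the role of the positivity of $\spec(T)$ transparent and outsources the analysis to a quotable black box, at the price of the auxiliary operators $R_i$; the paper's version avoids any operator representation and treats both slots symmetrically. One shared gloss worth flagging: both you and the paper feed vectors of $\1_{[\lambda_1,\lambda_2]}(\Delta)L^2(M)$ into \Cref{lem:uniqueness_algebraic_calculation}, whose hypotheses require elements of $M_\phi^a\phi^{1/2}$; this needs a smearing/approximation by analytic left-bounded vectors with spectral support in a slightly enlarged interval, which is harmless because only the union over all $0<\lambda_1<\lambda_2<\infty$ enters the final density argument.
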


\begin{proof}
    The unitary bimodule map $\Theta$ will be given by
    \begin{equation*}
        \Theta(\delta_1(a)b)=\delta_2(a)b
    \end{equation*}
    on elements of the form $\delta_1(a)b$ with $a,b\in L^2(M)$. The difficult part of the proof is to show that this map is isometric; the other properties will follow naturally.

    Let $0<\lambda_1<\lambda_2$ be arbitrary. Let $X$ and $T$ be as in Lemma \ref{lem:tens_prod_mod_op}. Note that $T$ is invertible since $0\notin \spec(T)$. On $(\1_{[\lambda_1,\lambda_2]}(\Delta)L^2(M))\odot M[\lambda_1,\lambda_2]\subset X$ we can define the maps $q_1$ and $q_2$ to $\H_1$ and $\H_2$, respectively, by
    \begin{equation*}
        q_1(\eta\otimes x)=\delta_1(\eta)x\text{ and } q_2(\eta\otimes x)=\delta_2(\eta)x.
    \end{equation*}
    Because $\delta_1$ and $\delta_2$ are bounded and right multiplication is bounded in the operator norm, we can boundedly extend $q_1$ and $q_2$ to $X$. 

    Using Lemma \ref{lem:uniqueness_algebraic_calculation} we can now show that for all $x,y\in X$ we have
    \begin{align} \label{eq:uniqueness_initial_inpr_equality}
        \ip{q_1(T(x)),q_1(T^{-1}(y))}_{\H_1}&+\ip{q_1(T^{-1}(x)),q_1(T(y))}_{\H_1}\\
        =&\ip{q_2(T(x)),q_2(T^{-1}(y))}_{\H_2}+\ip{q_2(T^{-1}(x)),q_2(T(y))}_{\H_2}.\nonumber
    \end{align}
    Indeed, for all $j\in \{1,2\}$, $\eta,\xi\in (\1_{[\lambda_1,\lambda_2]}(\Delta)L^2(M))$ and $a,b\in M[\lambda_1,\lambda_2]$ we have
    \begin{align*}
        &\ip{q_j(T(\eta\otimes a)),q_j(T^{-1}(\xi\otimes b))}_{\H_j}+\ip{q_j(T^{-1}(\eta\otimes a)),q_j(T(\xi\otimes b))}_{\H_j}\\
        =&\ip{\delta_j(\Delta^{1/4}\eta)\sigma^{\phi}_{-i/4}(a),\delta_j(\Delta^{-1/4}\xi)\sigma^{\phi}_{i/4}(b)}_{\H_j}+\ip{\delta_j(\Delta^{-1/4}\eta)\sigma^{\phi}_{i/4}(a),\delta_j(\Delta^{1/4}\xi)\sigma^{\phi}_{-i/4}(b)}_{\H_j}\\
        =&\ip{\delta_j(\Delta^{1/4}\eta)\sigma^{\phi}_{-i/4}(ab^\ast),\delta_j(\Delta^{-1/4}\xi)}_{\H_j}+\ip{\delta_j(\Delta^{-1/4}\eta)\sigma^{\phi}_{i/4}(ab^\ast),\delta_j(\Delta^{1/4}\xi))}_{\H_j}\\
        =&\ip{\delta_j(\Delta^{-1/4}(\eta \phi^{1/2}ab^\ast)),\delta_j(\Delta^{1/4}\xi)}_{\H_j}+\ip{\delta_j(\Delta^{1/4}\eta),\delta_j(\Delta^{-1/4}\xi J(\Delta^{-1/4}(\phi^{1/2}ab^\ast)))}_{\H_j}\\
        &-\ip{\delta_j(J(\Delta^{1/4}\xi)\Delta^{1/4}\eta),\delta_j(J(\Delta^{-1/4}\phi^{1/2}ab^\ast))}_{\H_j},
    \end{align*}
    where the last step follows from Lemma \ref{lem:uniqueness_algebraic_calculation} and the fact that $va=v\cdot J\pi_r(\phi^{1/2}a)^\ast J$ for $v\in \H_j$ and $a\in M$. Since we have for all $\eta,\xi \in L^2(M)$ that 
    \begin{equation*}
        \ip{\delta_1(\eta),\delta_1(\xi)}_{\H_1}=\ip{\eta,\delta_1^\ast\delta_1(\xi)}=\ip{\eta,\delta_2^\ast\delta_2(\xi)}=\ip{\delta_2(\eta),\delta_2(\xi)}_{\H_2},
    \end{equation*}
    we can now conclude that
    \begin{align*}
        \ip{q_1(T(\eta\otimes a))&,q_1(T^{-1}(\xi\otimes b))}_{\H_1}+\ip{q_1(T^{-1}(\eta\otimes a)),q_1(T(\xi\otimes b))}_{\H_1}\\
        &=\ip{q_2(T(\eta\otimes a)),q_2(T^{-1}(\xi\otimes b))}_{\H_2}+\ip{q_2(T^{-1}(\eta\otimes a)),q_2(T(\xi\otimes b))}_{\H_2}.
    \end{align*}
    By linearity and density of $(\1_{[\lambda_1,\lambda_2]}(\Delta)L^2(M))\odot M[\lambda_1,\lambda_2]$ in $X$ we find that \eqref{eq:uniqueness_initial_inpr_equality} holds. 

    The next part of the proof is to show that \eqref{eq:uniqueness_initial_inpr_equality} implies that 
    \begin{equation*}
        \ip{q_1(x),q_1(y)}_{\H_1}=\ip{q_2(x),q_2(y)}_{\H_2}
    \end{equation*} for $x,y\in X$. For this, we consider the operator $Te^{-sT^2}$ for $s>0$, defined by holomorphic functional calculus. We start with the observation that
    \begin{align}
    \begin{split}\label{eq:uniqueness_integrand}
        &\ip{q_j(T(Te^{-sT^2}(x))),q_j(T^{-1}(Te^{-sT^2}(y)))}_{\H_j}+\ip{q_j(T^{-1}(Te^{-sT^2}(x))),q_j(T(Te^{-sT^2}(y)))}_{\H_j}\\
        &\quad=-\frac{d}{ds}\ip{q_j(e^{-sT^2}(x)),q_j(e^{-sT^2}(y))}_{\H_j}
        \end{split}
    \end{align}
    for $j\in \{1,2\}$ and $x,y\in X$. Since $T$ is bounded and $\spec(T)\subset(0,\infty)$, we know that $\lim_{s\to \infty}\norm{e^{-sT^2}}=0$ \cite[Theorem 6.24]{Nee22}. Consequently, we have for $j\in \{1,2\}$ and $x,y\in X$ that
    \begin{equation*}
        -\lim_{r\to\infty}\int_0^r \frac{d}{ds}\ip{q_j(e^{-sT^2}(x)),q_j(e^{-sT^2}(y))}_{\H_j}\,ds=\ip{q_j(x),q_j(y)}_{\H_j}.
    \end{equation*}
    Since the integrand of the above integral is equal for $j=1$ and $j=2$ by \eqref{eq:uniqueness_initial_inpr_equality} and \eqref{eq:uniqueness_integrand}, we deduce that 
    \begin{equation*}
        \ip{q_1(x),q_1(y)}_{\H_1}=\ip{q_2(x),q_2(y)}_{\H_2}
    \end{equation*}
    for all $x,y\in X$. Therefore, we have for all $\eta,\xi\in (\1_{[\lambda_1,\lambda_2]}(\Delta)L^2(M))$ and $a,b\in M[\lambda_1,\lambda_2]$ that
    \begin{equation*}
        \ip{\delta_1(\eta)a,\delta_1(\xi)b}_{\H_1}=\ip{\delta_2(\eta)a,\delta_2(\xi)b}_{\H_2}.
    \end{equation*}

    So far we have shown that $\Theta$ preserves the inner product on certain subsets of $\H_1$, and the goal is to extend this to all of $\H_1$. This takes a few steps. First, note that $\Theta$ preserves the inner product on all of 
    \begin{equation*}
        \mathrm{lin}\{\delta_1(\eta)a|\lambda_1,\lambda_2>0, \eta\in (\1_{[\lambda_1,\lambda_2]}(\Delta)L^2(M)), a\in M[\lambda_1,\lambda_2]\},
    \end{equation*}
    since for each $\eta_1,\eta_2\in \bigcup_{\lambda_1,\lambda_2>0}(\1_{[\lambda_1,\lambda_2]}(\Delta)L^2(M))$ and $a_1,a_2\in \bigcup_{\lambda_1,\lambda_2>0}M[\lambda_1,\lambda_2]$ we can find $\lambda_1',\lambda_2'>0$ such that $\eta_1,\eta_2\in(\1_{[\lambda_1',\lambda_2']}(\Delta)L^2(M))$ and $a_1,a_2\in M[\lambda_1',\lambda_2']$. Next, since $\bigcup_{\lambda_1,\lambda_2>0}(\1_{[\lambda_1,\lambda_2]}(\Delta)L^2(M))$ is dense in $L^2(M)$ and $\delta_1$ is bounded, we can extend this to $\mathrm{lin}\{\delta_1(\eta)a|\eta\in L^2(M),a\in \bigcup_{\lambda_1,\lambda_2>0}M[\lambda_1,\lambda_2]\}$ and subsequently to $\mathrm{lin}\{\delta_1(\eta)a|\eta\in L^2(M),a\in M\}$ because $\bigcup_{\lambda_1,\lambda_2>0}M[\lambda_1,\lambda_2]$ is weak* dense in $M$. Lastly, by property (c) of a first-order differential calculus we conclude that $\Theta$ is isometric on all of $\H_1$. 

    We will finish the proof by discussing the other desired properties of $\Theta$. By property (c) of a first-order differential calculus, $\mathrm{lin}\{\delta_2(\eta)a|\eta\in L^2(M), a\in M\}$ is dense in $\H_2$. Because the image of an isometric map is closed, we know that $\Theta$ is surjective and therefore that it is a linear isometric isomorphism. By property (b) of a first-order differential calculus it is a unitary bimodule map, and it is clear that it intertwines $\J_1$ and $\J_2$.
\end{proof}

\section{Derivations for Quantum Markov Semigroups with Unbounded Generators}\label{sec:unbounded}

In this section we study derivations for quantum Markov semigroups that are not necessarily uniformly continuous. In this case the generator can be unbounded, and it is convenient to work with the associated quadratic forms on $L^2(M)$, which we call quantum Dirichlet forms. We show that the bounded vectors in the form domain form an algebra (\Cref{thm:algebra_Dirichlet_form}), which gives a suitable domain for a derivation. We then show that there exists a (possibly unbounded) first-order differential calculus associated with our given quantum Dirichlet form (\Cref{thm:existence_deriv_unbounded}).

We keep the notation from the previous section. In particular, $M$ is a von Neumann algebra and $\phi$ is a normal faithful state on $M$.

Let us recall some basic definitions concerning quadratic forms on Hilbert spaces. A quadratic form on a Hilbert space $H$ is a map $q\colon H\to [0,\infty]$ such that
\begin{itemize}
    \item $q(\lambda \xi)=\abs{\lambda}^2 q(\xi)$
    \item $q(\xi+\eta)+q(\xi-\eta)=2q(\xi)+2q(\eta)$
\end{itemize}
for all $\xi,\eta\in H$ and $\lambda\in \IC$. The form $q$ is called closed if it is lower semicontinuous and densely defined if $\dom(q)=\{\xi\in H\mid q(\xi)<\infty\}$ is dense.

If $q$ is a quadratic form, it gives rise to a bilinear form on $\dom(q)\times \dom(q)$ by the polarization identity. Vice versa, the diagonal of a bilinear form extended by $\infty$ to the complement of its domain is a quadratic form. We will use both viewpoints interchangeably and denote both objects by the same symbol.

The generator of a closed densely defined quadratic form $q$ is the positive self-adjoint operator $L$ given by
\begin{align*}
    \dom(L)&=\{\xi\in \dom(q)\mid \exists\eta\in H\,\forall \zeta\in \dom(q)\colon q(\xi,\zeta)=\langle \eta,\zeta\rangle\},\\
    L\xi&=\eta.
\end{align*}
Conversely, if $L$ is a positive self-adjoint operator, then
\begin{equation*}
    q\colon H\to [0,\infty],\,q(\xi)=\begin{cases}\norm{L^{1/2}\xi}^2&\text{if }\xi\in \dom(L^{1/2}),\\\infty&\text{otherwise}\end{cases}
\end{equation*}
is a closed densely defined quadratic form with generator $q$. 

To describe the quadratic forms associated with symmetric Markov semigroups, we need the following piece of notation. For $a\in L^2(M)$ let $a\wedge \phi^{1/2}$ be the projection onto the closed convex cone $\phi^{1/2}-L^2_+(M)$.

\begin{definition}
A closed densely defined quadratic form $\E\colon L^2(M)\to [0,\infty]$ is called \emph{conservative Dirichlet form} if 
\begin{itemize}
    \item $\E(Ja)=\E(a)$ for all $a\in L^2(M)$,
    \item $\E(a\wedge \phi^{1/2})\leq \E(a)$ for all $a\in L^2(M)$,
    \item $\E(\phi^{1/2})=0$.
\end{itemize}
The form $\E$ is called conservative completely Dirichlet form or \emph{quantum Dirichlet form} if for every $n\in\mathbb N$ the quadratic form
\begin{equation*}
    \E^{(n)}\colon L^2(M_n(M))\to [0,\infty],\,\E^{(n)}([a_{jk}])=\sum_{j,k=1}^n \E(a_{jk})
\end{equation*}
is a conservative Dirichlet form.
\end{definition}

There is a one-to-one correspondence between quantum Dirichlet forms on $L^2(M)$ and KMS-symmetric quantum Markov semigroups on $M$ (see \cite[Theorem 4.11]{Cip97}, \cite[Theorem 5.7]{GL95}): If $(\Phi_t)$ is a KMS-symmetric quantum Markov semigroup on $M$ and $\L_2$ the KMS implementation of its generator on $L^2(M)$, then the quadratic form associated with $\L_2$ is a quantum Dirichlet form. Vice versa, every quantum Dirichlet form arises this way.

\begin{theorem}\label{thm:algebra_Dirichlet_form}
Let $M$ be a von Neumann algebra, $\phi$ a normal faithful state on $M$ and $\E$ a quantum Dirichlet form on $L^2(M)$. If $a\in \dom(\E)\cap M\phi^{1/2}$, $b\in \dom(\E)\cap \phi^{1/2}M$, then $ab\in \dom(\E)$ and
\begin{equation*}
    \E(ab)^{1/2}\leq \norm{\pi_l(a)}\E(b)^{1/2}+\E(a)^{1/2}\norm{\pi_r(a)}.
\end{equation*}
In particular, $\dom(\E)\cap M\phi^{1/2}\cap \phi^{1/2}M$ with the involution $J$ is a $\ast$-algebra.
\end{theorem}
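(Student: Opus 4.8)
The plan is to reduce everything to the uniformly continuous case of \Cref{thm:existence:deriv_bounded} by approximating the generator with its Yosida approximants. Write $L=\L_2$ for the positive self-adjoint generator of $\E$ and $(T_t)=(e^{-tL})$ for the associated symmetric Markov semigroup. For $\alpha>0$ I would set $S_\alpha=\alpha(\alpha+L)^{-1}$ and $L_\alpha=\alpha(1-S_\alpha)$. The first step is to check that $L_\alpha$ is a bounded Markov generator: the representation $S_\alpha=\int_0^\infty\alpha e^{-\alpha t}T_t\,dt$ exhibits $S_\alpha$ as an average of the Markov operators $T_t$ against a probability density, so $S_\alpha$ is itself a symmetric Markov operator, and the expansion $e^{-tL_\alpha}=e^{-\alpha t}\sum_{n\ge 0}\frac{(\alpha t)^n}{n!}S_\alpha^n$ writes $e^{-tL_\alpha}$ as a convex combination of the Markov operators $S_\alpha^n$. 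Hence $(e^{-tL_\alpha})$ is a uniformly continuous symmetric Markov semigroup with bounded generator $L_\alpha$.

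Next, \Cref{thm:existence:deriv_bounded} applies to $L_\alpha$ and yields a first-order differential calculus $(\H_\alpha,\J_\alpha,\delta_\alpha)$ with $\delta_\alpha$ bounded and $\delta_\alpha^\ast\delta_\alpha=L_\alpha$. Writing $\E^{(\alpha)}(c)=\langle c,L_\alpha c\rangle=\norm{\delta_\alpha(c)}_{\H_\alpha}^2$, property (c) of the calculus gives the Leibniz rule $\delta_\alpha(ab)=\pi_l(a)\cdot\delta_\alpha(b)+\delta_\alpha(a)\cdot J\pi_r(b)^\ast J$ for $a\in M\phi^{1/2}$ and $b\in\phi^{1/2}M$, where $ab=\pi_l(a)b=\pi_r(b)a$. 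Taking norms and using $\norm{J\pi_r(b)^\ast J}=\norm{\pi_r(b)}$, I obtain
\begin{equation*}
\E^{(\alpha)}(ab)^{1/2}\le\norm{\pi_l(a)}\,\E^{(\alpha)}(b)^{1/2}+\E^{(\alpha)}(a)^{1/2}\,\norm{\pi_r(b)}.\tag{$\ast$}
\end{equation*}

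To pass to the limit, I would use that $\E^{(\alpha)}(c)=\int_0^\infty\frac{\alpha\lambda}{\alpha+\lambda}\,d\mu_c(\lambda)$, where $\mu_c$ is the spectral measure of $L$ at $c$; since $\frac{\alpha\lambda}{\alpha+\lambda}\nearrow\lambda$ as $\alpha\to\infty$, monotone convergence gives $\E^{(\alpha)}(c)\nearrow\norm{L^{1/2}c}^2=\E(c)$ for every $c\in L^2(M)$ (with value $+\infty$ when $c\notin\dom(\E)$). Since $a,b\in\dom(\E)$, the right-hand side of $(\ast)$ stays bounded by $\norm{\pi_l(a)}\E(b)^{1/2}+\E(a)^{1/2}\norm{\pi_r(b)}$, forcing $\sup_\alpha\E^{(\alpha)}(ab)<\infty$; hence $ab\in\dom(\E)$, and letting $\alpha\to\infty$ in $(\ast)$ yields the claimed inequality.

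For the $\ast$-algebra statement I would argue as follows. If $a,b\in\dom(\E)\cap M\phi^{1/2}\cap\phi^{1/2}M$, then $ab\in\dom(\E)$ by the first part, while $ab=\pi_l(a)b\in M\phi^{1/2}$ (as $b\in M\phi^{1/2}$) and $ab=\pi_r(b)a\in\phi^{1/2}M$ (as $a\in\phi^{1/2}M$), so the set is closed under multiplication; closure under $J$ is immediate from $\E(Ja)=\E(a)$ together with $J(M\phi^{1/2})=\phi^{1/2}M$ and $J(\phi^{1/2}M)=M\phi^{1/2}$, and a direct computation with the left Hilbert algebra multiplication shows $J(ab)=(Jb)(Ja)$, so $J$ is an anti-linear anti-multiplicative involution. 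The one genuinely non-routine step is the verification that the Yosida approximants $L_\alpha$ are Markov generators: this is exactly what makes \Cref{thm:existence:deriv_bounded} applicable and produces the Leibniz inequality $(\ast)$ with a constant uniform in $\alpha$. Once that is secured, the passage to the limit is a soft monotone-convergence argument and the algebraic closure properties are immediate.
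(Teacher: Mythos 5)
Your proof is correct and follows essentially the same strategy as the paper: approximate the generator by bounded KMS-symmetric Markov generators whose quadratic forms increase to $\E$, invoke \Cref{thm:existence:deriv_bounded} to get a bounded first-order differential calculus and hence the Leibniz inequality uniformly in the approximation parameter, and conclude by monotone convergence via the spectral theorem. The only difference is cosmetic: the paper uses the approximants $\frac{1}{t}(I-T_t)$, for which the Markov-generator property is immediate from complete positivity of $\Phi_t$, whereas your Yosida approximants $L_\alpha$ require the (correct) extra subordination and Poisson-series argument to verify that $e^{-tL_\alpha}$ is a symmetric Markov operator.
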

\begin{proof}
Let $(T_t)$ be the strongly continuous semigroup associated with $\E$ and let $\E_t(a)=\frac 1 t\langle a,a-T_t a\rangle$ for $a\in L^2(M)$. By the spectral theorem, $\E_t(a)\nearrow \E(a)$ as $t\searrow 0$.

Let $(\Phi_t)$ be the KMS-symmetric quantum Markov semigroup on $M$ associated with $(T_t)$. Since $\Phi_t$ is completely positive, $\frac 1t(I-\Phi_t)$ is conditionally completely negative. Thus $\frac 1 t(I-\Phi_t)$ generates a quantum Markov semigroup on $M$, which is clearly KMS-symmetric, and the associated symmetric Markov semigroup on $L^2(M)$ has generator $\frac 1 t(I-T_t)$.

By \Cref{thm:existence:deriv_bounded} there exists a bounded first-order differential calculus $(\H_t,\J_t,\delta_t)$ such that $\E_t(a)=\norm{\delta_t(a)}_{\H_t}^2$ for $a\in L^2(M)$. Thus, if $a\in \dom(\E)\cap M\phi^{1/2}$ and $b\in \dom(\E)\cap \phi^{1/2}M$, then
\begin{align*}
    \E_t(ab)^{1/2}&=\norm{\delta_t(ab)}_{\H_t}\\
    &=\norm{\pi_l(a)\delta_t(b)+\delta_t(a)J\pi_r(b)^\ast J}_{\H_t}\\
    &=\norm{\pi_l(a)}\norm{\delta_t(b)}_{\H_t}+\norm{\delta(a)}_{\H_t}\norm{\pi_r(b)}\\
    &=\norm{\pi_l(a)}\E_t(b)^{1/2}+\E(a)^{1/2}\norm{\pi_r(b)}.
\end{align*}
The claim follows by taking the limit $t\searrow 0$ of both sides.
\end{proof}

\begin{remark}
In contrast to the case of GNS-symmetric quantum Markov semigroups \cite[Theorem 6.3]{Wir22b} we could not show that $\dom(\E)\cap M\phi^{1/2}\cap \phi^{1/2}M$ is a form core for $\E$. The space $\dom(\E)\cap \phi^{1/4}M\phi^{1/4}$ is always a form core, but we do not expect it to be an algebra in general.
\end{remark}

\begin{theorem}\label{thm:existence_deriv_unbounded}
Let $M$ be a von Neumann algebra and $\phi$ a faithful normal state on $M$. If $\E$ is a quantum Dirichlet form on $L^2(M)$, then there exists a Hilbert space $\H$ with commuting left and right actions of $M$, an anti-unitary involution $\J\colon \H\to\H$ such that
\begin{equation*}
    \J(x\xi y)=y^\ast(\J\xi)x^\ast
\end{equation*}
for $x,y\in M$ and $\xi\in \H$, a closed operator $\delta\colon \dom(\E)\to \H$ such that $\J\delta=\delta J$ and
\begin{equation*}
    \delta(ab)=\pi_l(a)\cdot\delta(b)+\delta(a)\cdot J\pi_r(a)^\ast J
\end{equation*}
for $a\in \dom(\E)\cap M\phi^{1/2}$, $b\in \dom(\E)\cap \phi^{1/2}M$, and
\begin{equation*}
    \E(a,b)=\langle\delta(a),\delta(b)\rangle_\H    
\end{equation*}
for $a,b\in \dom(\E)$.
\end{theorem}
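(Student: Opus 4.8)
The plan is to realize $\E$ as an increasing limit of the bounded quantum Dirichlet forms already appearing in the proof of \Cref{thm:algebra_Dirichlet_form}, and to take a limit of the associated bounded first-order differential calculi. Concretely, fix a sequence $t_n\searrow 0$ and set $\E_n(a)=\frac1{t_n}\langle a,(I-T_{t_n})a\rangle$, where $(T_t)$ is the symmetric Markov semigroup of $\E$. As recorded in \Cref{thm:algebra_Dirichlet_form}, each $\E_n$ is the bounded quantum Dirichlet form of the uniformly continuous KMS-symmetric quantum Markov semigroup generated by $\frac1{t_n}(I-\Phi_{t_n})$, so \Cref{thm:existence:deriv_bounded} furnishes a bounded first-order differential calculus $(\H_n,\J_n,\delta_n)$ with $\delta_n^\ast\delta_n=\frac1{t_n}(I-T_{t_n})$ and hence $\norm{\delta_n(a)}_{\H_n}^2=\E_n(a)$. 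By the spectral theorem $\E_n(a)\nearrow\E(a)$, and by polarization $\E_n(a,b)\to\E(a,b)$, for all $a,b\in\dom(\E)$.

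First I would assemble these calculi into a single Hilbert space by an ultraproduct. Let $\U$ be a free ultrafilter on $\IN$ and let $\H_\U$ be the ultraproduct of the $\H_n$ along $\U$, i.e.\ the quotient of the bounded sequences $(\xi_n)_n$, $\xi_n\in\H_n$, by those with $\lim_\U\norm{\xi_n}=0$, equipped with $\langle(\xi_n),(\eta_n)\rangle=\lim_\U\langle\xi_n,\eta_n\rangle_{\H_n}$. Since the operators $\pi_l^{\H_n}(x)$, $\pi_r^{\H_n}(y^\op)$ and $\J_n$ are contractions (resp.\ anti-unitaries) for every $n$, their componentwise application descends to $\H_\U$ and defines commuting $\ast$-representations $\pi_l,\pi_r$ of $M$, $M^\op$ and an anti-unitary involution $\J$; property (a) and the commutation pass to the limit because they hold in each $\H_n$. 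For $a\in\dom(\E)$ the sequence $\delta_n(a)$ is norm-bounded by $\E(a)^{1/2}$, so $\delta(a):=[(\delta_n(a))_n]$ is a well-defined element of $\H_\U$, and
\[
\langle\delta(a),\delta(b)\rangle=\lim_\U\langle\delta_n(a),\delta_n(b)\rangle_{\H_n}=\lim_\U\E_n(a,b)=\E(a,b)
\]
for all $a,b\in\dom(\E)$, which is the required energy identity.

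Next I would check the remaining structural properties by passing the corresponding identities for the $\delta_n$ to the ultraproduct. Relation (b) of \Cref{thm:existence:deriv_bounded} gives $\J_n\delta_n(a)=\delta_n(Ja)$ for each $n$, hence $\J\delta=\delta J$. For the Leibniz rule, take $a\in\dom(\E)\cap M\phi^{1/2}$ and $b\in\dom(\E)\cap\phi^{1/2}M$; by \Cref{thm:algebra_Dirichlet_form} we have $ab\in\dom(\E)$, and property (c) of each bounded calculus yields $\delta_n(ab)=\pi_l^{\H_n}(a)\delta_n(b)+\delta_n(a)\,J\pi_r^{\H_n}(b)^\ast J$ in $\H_n$; taking the class along $\U$ gives exactly $\delta(ab)=\pi_l(a)\delta(b)+\delta(a)J\pi_r(b)^\ast J$ in $\H_\U$. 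Finally, $\delta$ is closed: since $\norm{\delta(a)}^2=\E(a)$ and $\E$ is a closed form, an $\E$-Cauchy sequence with $L^2$-limit $a$ lands in $\dom(\E)$ with matching image, so the graph of $\delta$ is closed. I would then take $\H$ to be the closed sub-bimodule of $\H_\U$ generated by $\{\delta(a)x\mid a\in\dom(\E),\,x\in M\}$, which is invariant under $\pi_l,\pi_r$ and $\J$ and still carries $\delta$.

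The main obstacle is the module structure rather than the energy identity. The ultraproduct supplies the inner products and the algebraic relations essentially for free, but the left and right actions it produces are only bounded $\ast$-representations and need not be normal, so $\H_\U$ is a priori not an $M$-$M$-correspondence in the strict sense; this is presumably why the statement only asserts commuting actions. If one insists on normality, the ultraproduct must be replaced by a more delicate limit in which one shows directly that the bimodule inner products $\langle\delta_n(\eta)x,\delta_n(\xi)y\rangle$ converge. This is where the purely algebraic identity of \Cref{lem:uniqueness_algebraic_calculation}, together with the $Te^{-sT^2}$ resolution used in the proof of \Cref{thm:uniqueness}, becomes essential, since it expresses these bimodule inner products through diagonal values $\E_n(\cdot,\cdot)$ that do converge. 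Handling the analytic domains, by restricting to $\1_{[\lambda_1,\lambda_2]}(\Delta)L^2(M)$ and $M[\lambda_1,\lambda_2]$ exactly as in \Cref{thm:uniqueness}, would be the technical heart of that alternative route.
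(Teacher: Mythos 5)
Your proposal is correct and follows essentially the same route as the paper: the paper's proof of \Cref{thm:existence_deriv_unbounded} uses exactly this ultraproduct of the bounded calculi $(\H_{t_n},\J_{t_n},\delta_{t_n})$ along a free ultrafilter, with the same bound $\norm{\delta_{t_n}(a)}^2=\E_{t_n}(a)\leq\E(a)$ for well-definedness, the same polarization argument for $\langle\delta(a),\delta(b)\rangle=\E(a,b)$, closedness of $\delta$ from closedness of $\E$, and componentwise verification of the Leibniz rule and $\J\delta=\delta J$; your closing observation that the resulting actions need not be normal matches the paper's own remark.
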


\begin{remark}
By commuting left and right actions of $M$ on $\H$ we mean unital $\ast$-homomorphisms $\pi_l^\H\colon M\to B(\H)$, $\pi_r^\H\colon M^\op\to B(\H)$ with commuting images. As usual, we write $x\xi y$ for $\pi_l^\H(x)\pi_r^\H(y^\op)\xi$.

We do not claim that the actions of $M$ on $\H$ are normal so that $\H$ is in general not a correspondence. This is not only an artefact of our proof or a feature of KMS symmetry specifically, but happens even for symmetric Markov semigroups on commutative von Neumann algebras. For GNS-symmetric quantum Markov semigroups, the normality of the action is linked to the existence of the carré du champ (see \cite[Section 7]{Wir22b}).
\end{remark}

\begin{proof}
Let $(T_t)$ be the strongly continuous semigroup associated with $\E$. As in the proof of \Cref{thm:algebra_Dirichlet_form}, we consider the quantum Dirichlet form $\E_t$ given by $\E_t(a)=\frac 1 t \langle a,a-T_t a\rangle$ and the associated first-order differential calculus $(\H_t,\J_t,\delta_t)$. We will use an ultraproduct construction to define $(\H,\J,\delta)$ for $\E$.

Choose $\omega\in\beta\IN\setminus \IN$ and a null sequence $(t_n)$ in $(0,\infty)$ and let $\H$ be the ultraproduct $\prod_{n\to\omega}\H_{t_n}$. We write $[\xi_n]$ for the equivalence class of $(\xi_n)$ in $\H$.

We can define commuting left and right actions of $M$ on $\H$ by
\begin{align*}
    x[\xi_n]y=[x\xi_n y].
\end{align*}
As noted in the previous remark, these actions are not necessarily normal.

Moreover, let 
\begin{equation*}
    \J\colon \H\to \H,\,\J[\xi_n]=[\J_{t_n}\xi_n].
\end{equation*}
It is easy to verify that $\J$ is an anti-unitary involution such that $\J(x\xi y)=y^\ast (\J\xi)x^\ast$ for $x,y\in M$ and $\xi\in\H$.

Finally, if $a\in\dom(\E)$, let $\delta(a)=[\delta_{t_n}(a)]$. Since
\begin{equation*}
    \norm{\delta_{t_n}(a)}_{\H_{t_n}}^2=\E_{t_n}(a)\leq \E(a),
\end{equation*}
the map $\delta\colon \dom(\E)\to \H$ is well-defined. Furthermore,
\begin{equation*}
    \langle \delta(a),\delta(b)\rangle_\H=\lim_{n\to\omega}\langle \delta_{t_n}(a),\delta_{t_n}(b)\rangle_{\H_{t_n}}=\lim_{n\to\omega}\E_{t_n}(a,b)=\E(a,b)
\end{equation*}
The operator $\delta$ is closed because $\E$ is closed. The other properties of $\delta$ can be checked componentwise.
\end{proof}

\begin{remark}
At this point we do not know if the triple $(\H,\J,\delta)$ is uniquely determined. Given the known results for tracially symmetric or more generally GNS-symmetric quantum Markov semigroups, it seems reasonable to suspect that the left and right action are not uniquely determined on $M$, but only on some $\ast$-subalgebra.
\end{remark}

In the case of tracially symmetric or more generally GNS-symmetric quantum Markov semigroups it can be useful to realize the bimodule $\H$ inside $L^2(\hat M)$ for some von Neumann algebra $\hat M$ containing $M$ with (faithful normal) expectation (see \cite{JRS,Wir22b}). We will show that in certain cases such a von Neumann algebra $\hat M$ can also be found for KMS-symmetric semigroups, although the construction seems less canonical.

Clearly, a necessary condition for the existence is that $M$ acts normally on $\H$. Since we do not have a criterion for this to happen in terms of the semigroup, we formulate the result purely in terms of correspondences.

\begin{proposition}
Let $M$ be a von Neumann algebra, $\phi$ a faithful normal state on $M$ and $(\H,\J)$ a self-dual $M$-$M$-correspondence. If $M$ is semi-finite or $M$ has finite-dimensional center and $\H$ has finite index, then there exists a von Neumann algebra $\hat M$ containing $M$, a faithful normal conditional expectation $E\colon \hat M\to M$, and an isometric bimodule map $V\colon \H\to L^2(\hat M)$ such that $V\J=J_{\phi\circ E}V$, 
\end{proposition}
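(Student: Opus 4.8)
The plan is to realize $(\H,\J)$ as a sub-bimodule of $L^2(\hat M)$ that is stable under the modular conjugation. The guiding observation is that for any von Neumann algebra $\hat M$ in standard form with respect to a faithful normal state $\psi$, the modular conjugation satisfies
\[
J_\psi(x\zeta y)=y^\ast(J_\psi\zeta)x^\ast
\]
for $x,y\in\hat M$ and $\zeta\in L^2(\hat M)$, which is exactly the self-duality axiom defining $\J$. Hence it suffices to produce $\hat M\supset M$ together with a faithful normal conditional expectation $E\colon\hat M\to M$ and an isometric bimodule embedding of $\H$ into $L^2(\hat M,\phi\circ E)$ onto a $J_{\phi\circ E}$-stable subspace; the intertwining $V\J=J_{\phi\circ E}V$ will then fall out from the fact that both sides implement the real structure.

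To build $\hat M$ I would use a Fock-space (free Gaussian) construction over $M$ associated with the self-dual correspondence $(\H,\J)$, in the spirit of \cite{JRS}. One forms the amalgamated full Fock module
\[
\mathcal F(\H)=L^2(M)\oplus\H\oplus(\H\otimes_M\H)\oplus\cdots,
\]
lets $\hat M$ be the von Neumann algebra generated by the left action of $M$ together with the self-adjoint fields $s(\xi)=\ell(\xi)+\ell(\xi)^\ast$ for $\xi$ in the real part $\{\xi\in\H\mid\J\xi=\xi\}$, and takes $\psi=\phi\circ E$ to be the quasi-free vacuum state $\zeta\mapsto\langle\Omega,\zeta\,\Omega\rangle$ with $\Omega=\phi^{1/2}$. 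Compression to the vacuum gives a faithful normal conditional expectation $E\colon\hat M\to M$, the first chaos provides the isometric bimodule embedding $V\colon\H\to L^2(\hat M)$ onto the copy of $\H$ inside $\mathcal F(\H)$, and the real structure $\J$ is precisely the restriction of $J_{\phi\circ E}$ to the first chaos.

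The heart of the matter, and the main obstacle, is that the Fock construction requires $\H$ to be a \emph{self-dual Hilbert $M$-module}, i.e. to carry an $M$-valued inner product compatible with the right action (which is what makes the internal tensor powers $\H\otimes_M\H$ and the operators $\ell(\xi)$ meaningful), whereas a general correspondence --- a mere pair of commuting normal actions --- need not admit one. Producing this $M$-valued inner product is exactly where the two hypotheses are used. When $M$ is semi-finite I would fix a normal semi-finite faithful trace $\tau$ and realize the normal right action on $\H$ as a self-dual right Hilbert $M$-module, via the standard embedding of normal right $M$-modules into amplifications of $L^2(M,\tau)$ followed by Paschke's self-dual completion. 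When $M$ has finite-dimensional center and $\H$ has finite index, a finite Pimsner--Popa basis furnishes the $M$-valued inner product directly, the finiteness of the center ensuring that the central index element is bounded and invertible, so that the inner product is faithful and nondegenerate. In either case the anti-unitary $\J$, which by self-duality interchanges the left and right actions, transports the right-module structure to a left one and pins down the compatibility needed to run the construction.

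Granting this module-theoretic input, the remaining verifications are routine: the self-dual module structure makes the $\ell(\xi)$ adjointable and bounded and $\hat M$ a genuine von Neumann algebra; $E$ is normal and faithful because $\Omega$ is cyclic and separating for the quasi-free state; $V$ is isometric and bimodular by construction; and $V\J=J_{\phi\circ E}V$ holds because both implement the real structure on the first chaos. I would emphasize that the construction is genuinely less canonical than in the tracially or GNS-symmetric settings, since it depends on the choice of trace in the semi-finite case, or of a Pimsner--Popa basis in the finite-index case, which is precisely why the statement is phrased only as an existence result.
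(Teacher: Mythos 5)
There is a genuine gap here, and it lies precisely at the step you dismiss as routine. Your proposal never constructs the modular dynamics on $\H$, yet that is exactly what the two hypotheses are for. The paper's proof shows that in either case there is a strongly continuous unitary group $(\U_t^\H)$ on $\H$ satisfying $\U_t^\H(x\xi y)=\sigma_t^\phi(x)(\U_t^\H\xi)\sigma_t^\phi(y)$ and $\J\U_t^\H=\U_t^\H\J$ — in the semi-finite case $\U_t^\H\xi=\rho^{it}\xi\rho^{-it}$ with $\phi=\tau(\,\cdot\,\rho)$, and in the finite-index case Longo's $\U_t^\H=\Delta_\H^{it}D_\H^{it}$, where $\Delta_\H$ is the spatial derivative of $\phi\circ(\pi_l^\H)^{-1}\circ\epsilon$ relative to $\phi\circ(\pi_r^\H)^{-1}$ and $D_\H$ is the index element, with $\J$-commutation coming from \cite[Theorem 2.3]{Lon18} via the conjugate module. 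This makes $(\H,\J,(\U_t^\H))$ a Tomita correspondence, and the Fock-type realization of \cite[Section 4.2]{Wir22b} then does the rest. So you have misidentified what the hypotheses buy: the obstruction is not the absence of an $M$-valued inner product (the construction in \cite{Wir22b} runs on a plain correspondence once it carries the Tomita data), but the absence of a one-parameter group compatible with $\sigma^\phi$ on both sides and commuting with $\J$.

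Concretely, without $(\U_t^\H)$ your ``remaining verifications'' do not go through, because the state $\phi\circ E$ on your Fock algebra is not a trace. Whether $\Omega=\phi^{1/2}$ is separating for $\hat M$, whether the vacuum compression is a \emph{faithful normal} conditional expectation, and above all what $J_{\phi\circ E}$ does on the first chaos are all controlled by the modular group of $\phi\circ E$, whose restriction to the first chaos must be a group $\U_t$ with exactly the covariance property above (since it acts as $\sigma_t^\phi$ on $M$ from both sides). The operator $J_{\phi\circ E}$ on the first chaos is the anti-unitary part of the polar decomposition of the closure of $s(\xi)\Omega\mapsto s(\xi)^\ast\Omega$; it coincides with your $\J$ only when the real subspace $\{\xi\in\H\mid\J\xi=\xi\}$ is compatible with such a modular flow — asserting that ``both implement the real structure'' assumes precisely what must be proved. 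In the semi-finite case you would be forced to introduce $\rho^{it}\,\cdot\,\rho^{-it}$ anyway (at which point you are reproducing the paper's argument, and the detour through amplifications of $L^2(M,\tau)$ and Paschke's self-dual completion is unnecessary); in the finite-index case a Pimsner--Popa basis gives a module inner product but produces neither the spatial derivative $\Delta_\H$ nor the index correction $D_\H$, and nothing in your outline verifies $\J\U_t^\H=\U_t^\H\J$, which is the nontrivial input from \cite{Lon18}.
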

\begin{proof}
We will show that in either case there is a strongly continuous unitary group $(\U_t)$ on $\H$ such that $\U_t^\H(x\xi y)=\sigma_t^\phi(x) (\U_t^\H \xi)\sigma_t^\phi(y)$ for $x,y\in M$, $\xi \in \H$, $t\in\IR$ and $\J\U^\H_t=\U_t^\H\J$ for $t\in\IR$. This makes $(\H,\J,(\U_t^\H))$ into a Tomita correspondence in the terminology of \cite{Wir22b}, and the result follows from \cite[Section 4.2]{Wir22b}.

First assume that $M$ is semi-finite and let $\tau$ be a normal semi-finite faithful trace on $M$. There exists $\rho\in L^1_+(M,\tau)$ such that $\phi=\tau(\,\cdot\,\rho)$. Let $\U_t^\H\xi=\rho^{it}\xi\rho^{-it}$. Since the actions of $M$ on $\H$ are normal, $(\U_t^\H)$ is a strongly continuous unitary group. Moreover,
\begin{equation*}
    \U_t^\H(x\xi y)=\rho^{it}x\xi y\rho^{-it}=\sigma^\phi_t(x)\rho^{it}\xi\rho^{-it}\sigma^\phi_t(y)=\sigma^\phi_t(x)(\U_t^\H\xi)\sigma^\phi_t(y)
\end{equation*}
and
\begin{equation*}
    \U_t^\H \J\xi=\rho^{it}(\J\xi)\rho^{-it}=\J(\rho^{it}\xi \rho^{-it})=\J\U_t^\H\xi
\end{equation*}
for $x,y\in M$, $\xi\in \H$ and $t\in\IR$. This settles the claim for semi-finite von Neumann algebras.

Now assume that $M$ has finite-dimensional center and $\H$ has finite index. In this case we use Longo's construction from \cite{Lon18}, which we briefly recall.

Write $\pi_l^\H$ and $\pi_r^\H$ for the left and right action of $M$ on $\H$ and let $\epsilon\colon \pi_r^\H(M)^\prime\to \pi_l^\H(M)$ be the minimal conditional expectation. Define $\Delta_\H$ to be the spatial derivative of $\phi\circ (\pi_l^\H)^{-1}\circ \epsilon$ with respect to $\phi\circ (\pi_r^\H)^{-1}$, that is,
\begin{equation*}
    \Delta_\H=\frac{d(\phi\circ(\pi_l^\H)^{-1}\circ \epsilon)}{d(\phi\circ (\pi_r^\H)^{-1})}.
\end{equation*}
Here $(\pi_l^\H)^{-1}$ and $(\pi_r^\H)^{-1}$ are to be understood as follows: There exist central projections $p,q\in M$ such that $\ker \pi_l^\H=(1-p)M$, $\ker \pi_r^\H=(1-q)M$. Then $\pi_l^\H$ restricted to $pM$ (resp. $\pi_r^\H$ restricted to $qM$) is injective, and we write $(\pi_l^\H)^{-1}$ (resp. $(\pi_r^\H)^{-1})$ for the inverse of this restriction.

By definition of the spatial derivative, $\Delta_\H$ is a non-singular positive self-adjoint operator on $\H$.

Since $M$ has finite-dimensional center, there are central projections $e_1,\dots,e_n\in M$ such that $M\cap M'=\bigoplus_j \IC e_j$. Let $\H_{ij}=e_i \H e_j$. This is an $e_i M$-$e_j M$-correspondence and $\H=\bigoplus_{i,j}\H_{ij}$ as Hilbert spaces. Let $d_{ij}=\sqrt{\mathrm{Ind}(\H_{ij})}$ and 
\begin{equation*}
D_\H=\sum_{i,j}d_{ij}\pi_l^\H(e_i)\pi_r^\H(e_j^\op).
\end{equation*}
Finally let $\U_t^\H=\Delta_\H^{it}D_\H^{it}$. The unitary group $(\U_t)$ satisfies
\begin{equation*}
    \U_t(x\xi y)=\sigma^\phi_t(x)(\U_t\xi)\sigma^\phi_t(y)
\end{equation*}
for $x,y\in M$, $\xi\in \H$ and $t\in\IR$ by \cite[Theorem 2.3]{Lon18}.

Moreover, let
\begin{equation*}
    T\colon \H\to \overline{\H},\,\xi\mapsto\overline{\J\xi}.
\end{equation*}
By definition of $\J$, the map $T$ is a unitary bimodule map. It follows from \cite[Theorem 2.3]{Lon18} that
\begin{equation*}
    \overline{\J\U_t^\H \xi}=T\U_t^\H\xi=\U_t^{\bar\H}T\xi=\U_t^{\bar\H}\, \overline{\J\xi}=\overline{\U_t^\H \J\xi}
\end{equation*}
for $\xi \in \H$ and $t\in\IR$. Thus $\J$ commutes with $(\U_t^\H)$.
\end{proof}

%\begin{corollary}
%Let $M$ be a semi-finite von Neumann algebra and $\phi$ a faithful normal state on $M$. If $(\Phi_t)$ is a uniformly continuous quantum Markov semigroup on $M$ with generator $\L$, then there exists von Neumann algebra $\hat M$ containing $M$, a faithful normal conditional expectation $E\colon \hat M\to M$ and a bounded operator $\delta\colon L^2(M)\to L^2(\hat M)$ satisfying
%\begin{itemize}
%    \item $\delta\circ J_\phi=J_{\phi\circ E}\circ\delta$,
%    \item $\delta(ab)=a\delta(b)+\delta(a)b$ for all $a\in M\phi^{1/2}$, $b\in %\phi^{1/2}M$
%\end{itemize}
%such that
%\begin{equation*}
%    \L_2=\delta^\ast\delta.
%\end{equation*}
%\end{corollary}

\appendix
\section{Alternative proof of Theorem \ref{thm:commutator_sqrt_matrix}}

The proof of Theorem \ref{thm:commutator_sqrt_matrix} shows that this result can be easily obtained from Theorem \ref{thm:existence_derivation_matrix}, and therefore that one can prove it in a basis-independent fashion. On the other hand, it gives very little intuition for the $V_j$ mentioned in the theorem. To provide a bit more feeling for the $V_j$, we include an alternative proof of the result.

Recall that by \cite[Theorem 4.4]{AC21} the generator $\L$ of a KMS-symmetric quantum Markov semigroup on $M_n(\IC)$ can be written as
\begin{equation*}\label{eq:rep_KMS_generator}
    \L(A)=(1+\sigma_{-i/2})^{-1}(\Psi(I_n))A+A(1+\sigma_{i/2})^{-1}(\Psi(I_n))-\Psi(A)\tag{$\ast$}
\end{equation*}
with a KMS-symmetric completely positive map $\Phi\colon M_n(\IC)\to M_n(\IC)$.

\begin{proof}[Alternative proof of Theorem \ref{thm:commutator_sqrt_matrix}]
    Let $\Psi$ be a KMS-symmetric completely positive map such that \eqref{eq:rep_KMS_generator} holds, i.e. \begin{equation*}
        \L(A)=(1+\sigma_{-i/2})^{-1}(\Psi(I_n))A+A(1+\sigma_{i/2})^{-1}(\Psi(I_n))-\Psi(A).
    \end{equation*}
    By Lemma \ref{lem:properties_V-trafo_matrix}(ii), $\check{\Psi}$ is also completely positive and KMS-symmetric. Next, we define the completely positive map $\Xi$ by
    \begin{equation*}
        \Xi(A)=\rho^{1/4}\check{\Psi}(\rho^{-1/4}A\rho^{-1/4})\rho^{1/4}
    \end{equation*}
    for all $A\in M_n{\IC}$. Now for all $A,B\in M_n(\IC)$ we have
    \begin{equation*}
        \tr(A\Xi(B))=\tr(\rho^{-1/4}A\rho^{-1/4}\rho^{1/2}\check{\Psi}(\rho^{-1/4}B\rho^{-1/4})\rho^{1/2})=\tr(\Xi(A)B)
    \end{equation*}
    by the KMS-symmetry of $\check{\Psi}$.     
    
    Let $V_1, \dots, V_N$ be the Kraus representation of $\Xi$, meaning that
    \begin{equation*}
        \Xi(A)=\sum_{j=1}^N V_j^\ast A V_j
    \end{equation*}
    for all $A\in M_n(\IC)$. Since $\tr(A\Xi(B))=\tr(\Xi(A)B)$, we can assume without loss of generality that for each $j$ there exists an index $j^\ast$ such that $V_j^\ast=V_{j^\ast}$ and $(j^\ast)^\ast=j$. Note that $\sigma_{-i/4}(V_1),\dots,\sigma_{-i/4}(V_N)$ is a Kraus representation of $\check{\Psi}$. 
    Calculating $\sum_j\ip{[V_j,A],[V_j,B]}$ then gives
    \begin{align*}
        \sum_j\ip{[V_j,A],[V_j,B]}_\rho=&\sum_j\tr(\rho^{1/2}(A^\ast V_j^\ast - V_j^\ast A^\ast)\rho^{1/2}(V_jB-BV_j))\\
        =&\sum_j\tr(\rho^{1/2}A^\ast \rho^{1/2}(\sigma_{i/2}(V_j^\ast)V_jB+BV_j\sigma_{-i/2}(V_j)\\
        &\quad-\sigma_{i/2}(V_j^\ast)BV_j-V_jB\sigma_{-i/2}(V_j^\ast)))\\
        =&\sum_j\tr(\rho^{1/2}A^\ast \rho^{1/2}(\sigma_{i/2}(V_j^\ast)V_j B+BV_j^\ast\sigma_{-i/2}(V_j)\\
        &\quad-\sigma_{i/2}(V_j^\ast)BV_j-V_j^\ast B\sigma_{-i/2}(V_j))).
    \end{align*}
    We observe that
    \begin{align*}
        \sum_j\sigma_{i/2}(V_j^\ast)BV_j+V_j^\ast B\sigma_{-i/2}(V_j)=&\sum_j\sigma_{i/4}(\sigma_{-i/4}(V_j)^\ast)B\sigma_{i/4}(\sigma_{-i/4}(V_j))\\
        &-\sigma_{-i/4}(\sigma_{-i/4}(V_j)^\ast) B\sigma_{-i/4}(\sigma_{-i/4}(V_j))\\
        =&\W(\check{\Psi})(B)
    \end{align*}
    and consequently that
    \begin{equation*}
        (1+\sigma_{i/2})(\sum_jV_j^\ast\sigma_{-i/2}(V_j))=\sum_j\sigma_{i/2}(V_j^\ast)V_j+V_j^\ast \sigma_{-i/2}(V_j)=\W(\check{\Psi})(I_n).
    \end{equation*}
    Therefore we have
    \begin{align*}
        \sum_jV_j^\ast\sigma_{-i/2}(V_j)&=(1+\sigma_{i/2})^{-1}(\Psi(I_n)),\\
        \sum_j\sigma_{i/2}(V_j^\ast)V_j&=(1+\sigma_{-i/2})^{-1}(\Psi(I_n)).
    \end{align*}
    Coming back to our original expression we find
    \begin{align*}
        \sum\nolimits_j\ip{[V_j,A],[V_j,B]}_\rho&=\tr(\rho^{1/2}A^\ast \rho^{1/2}((1+\sigma_{-i/2})^{-1}(\Psi(I_n)) B\\
        &\quad+B(1+\sigma_{i/2})^{-1}(\Psi(I_n))-\Psi(B)))\\
        &=\langle \L(A),B\rangle_\rho.\qedhere
    \end{align*}
\end{proof}

\DeclareFieldFormat[article]{citetitle}{#1}
\DeclareFieldFormat[article]{title}{#1}

\printbibliography

%\bibliography{references}
%\bibliographystyle{alpha}

\end{document}